\documentclass[11pt,twoside]{article}


\usepackage{fullpage}
\usepackage[top=1in, bottom=1.25in, left=1in, right=1in]{geometry}


\usepackage{fullpage, amsthm, amsmath, amssymb, hyperref, dsfont, mathtools,bm}
\usepackage{framed}
\usepackage{caption,subcaption,graphicx,rotating,multirow}
\usepackage{url}
\usepackage{color,xcolor}
\usepackage{enumitem}
\usepackage{epstopdf}

\theoremstyle{plain}
\newtheorem{theorem}{Theorem}
\newtheorem{definition}[theorem]{Definition}

\newtheorem*{remark}{Remark}
\newtheorem{lemma}[theorem]{Lemma}

\newtheorem{corollary}[theorem]{Corollary}

\newtheorem{proposition}[theorem]{Proposition}
 \newenvironment{proofof}[1]{ \emph{Proof of #1.}}{\hfill \rule{2mm}{2mm} 
 }
  

 
\newlength{\widebarargwidth}
\newlength{\widebarargheight}
\newlength{\widebarargdepth}

\makeatletter
\long\def\@makecaption#1#2{
        \vskip 0.8ex
        \setbox\@tempboxa\hbox{\small {\bf #1:} #2}
        \parindent 1.5em  
        \dimen0=\hsize
        \advance\dimen0 by -3em
        \ifdim \wd\@tempboxa >\dimen0
                \hbox to \hsize{
                        \parindent 0em
                        \hfil 
                        \parbox{\dimen0}{\def\baselinestretch{0.96}\small
                                {\bf #1.} #2
                                } 
                        \hfil}
        \else \hbox to \hsize{\hfil \box\@tempboxa \hfil}
        \fi
        }
\makeatother


\newcommand{\half}{\frac{1}{2}}



\newcommand{\normal}{\ensuremath{\mathcal{N}}}

\newcommand{\Fspace}{\ensuremath{\mathcal{F}}}

\newcommand{\bigo}{\ensuremath{\mathcal{O}}}

\renewcommand{\P}{\operatorname{\mathbb{P}}}
\newcommand{\E}{\operatorname{\mathbb{E}}}


\newcommand{\Z}{\mathbb{Z}}

\newcommand{\R}{\mathbb{R}}

\newcommand{\lbr}{\langle}
\newcommand{\rbr}{\rangle}


\newcommand{\indi}{\mathds{1}}



\newcommand{\rmd}{\mathrm{d}}


\def\BC{\begin{center}}
\def\EC{\end{center}}
\def\BIT{\begin{itemize}}
\def\EIT{\end{itemize}}
\def\BET{\begin{enumerate}}
\def\EET{\end{enumerate}}
\def\BEQ{\begin{equation}}
\def\EEQ{\end{equation}}


\long\def\comment#1{}



\newenvironment{customthm}[1]
  {\innercustomthm}
  {\endinnercustomthm}


\usepackage[mathscr]{euscript}

\newcommand{\overlap}{\textup{\texttt{overlap}}}
\newcommand{\risk}{\mathcal{R}}
\newcommand{\bX}{{\bm X}}
\newcommand{\bW}{{\bm W}}
\newcommand{\hatbX}{\widehat{{\bm X}}}

\def\epsb{\bar{\eps}}
\def\BEC{{\rm BEC}}
\def\Unif{{\sf Unif}}
\def\onu{\overline{\nu}}
\def\hnu{\hat{\nu}}
\def\sdec{\mbox{\rm\tiny dec}}
\def\se{\mbox{\rm\tiny e}}

\def\sTV{\mbox{\rm\tiny TV}}
\def\hbtheta{\hat{\boldsymbol\theta}}
\def\btheta{{\boldsymbol\theta}}
\def\bY{{Y}}
\def\homega{\hat{\omega}}
\def\ssum{\mbox{\tiny\rm sum}}

\def\scm{\mbox{\tiny\rm cm}}
\def\sreg{\mbox{\tiny\rm reg}}

\def\htheta{\hat{\theta}}
\def\hbtheta{\hat{\boldsymbol\theta}}

\def\cF{{\mathcal F}}
\def\prob{{\mathbb P}}
\def\reals{{\mathbb R}}
\def\naturals{{\mathbb N}}
\def\tx{\tilde{x}}
\def\tG{\tilde{G}}
\def\tE{\tilde{E}}
\def\tX{\tilde{X}}
\def\cX{{\mathcal X}}
\def\cY{{\mathcal Y}}
\def\ocX{\overline{\mathcal X}}
\def\ocY{\overline{\mathcal Y}}
\def\cS{{\mathcal S}}
\def\cG{{\mathcal G}}
\def\hOmega{\widehat{\Omega}}

\def\eps{\varepsilon}
\def\bfone{{\mathbf 1}}

\def\integers{{\mathbb Z}}
\DeclareSymbolFont{rsfs}{U}{rsfs}{m}{n}
\DeclareSymbolFontAlphabet{\mathscrsfs}{rsfs}

\def\cuP{\mathscrsfs{P}}
\def\cuQ{\mathscrsfs{Q}}

\def\bx{{\boldsymbol x}}
\def\by{{\boldsymbol y}}
\def\bzero{{\boldsymbol 0}}

\usepackage{color}

\numberwithin{equation}{section}
\numberwithin{theorem}{section}

\begin{document}

\title{On the computational tractability of statistical estimation\\ on amenable graphs}

\author{Ahmed El Alaoui\textsuperscript{*} \;\; and\;\;\;  Andrea Montanari\thanks{Department of Electrical Engineering and
  Department of Statistics, Stanford University}}

\date{}
\maketitle

\begin{abstract}
We consider the problem of estimating a vector of discrete variables $\btheta = (\theta_1,\cdots,\theta_n)$, based on
noisy observations $Y_{uv}$ of the pairs $(\theta_u,\theta_v)$ on the edges of a graph $G=([n],E)$. This setting comprises a broad family of statistical
estimation problems, including group synchronization on graphs, community detection, and low-rank matrix estimation. 

A large body of theoretical work has established sharp thresholds for weak and exact recovery, and sharp characterizations of the 
optimal reconstruction accuracy in such models, focusing however on the special case of Erd\"os--R\'enyi-type random graphs. 
The single most important finding of this line of work is the ubiquity
of an information-computation gap. Namely, for many models of interest,
a large gap is found between the optimal accuracy achievable by any statistical method, and the optimal accuracy achieved by known polynomial-time algorithms.
Moreover, this gap is generally believed to be robust to small amounts of additional side information revealed about the $\theta_i$'s.

How does the structure of the graph $G$ affect this picture? Is the information-computation gap a general phenomenon 
or does it only apply to specific families of graphs? 

We prove that the picture  is dramatically different for graph sequences converging to amenable graphs (including, for instance, $d$-dimensional grids).
We consider a model in which an arbitrarily small fraction of the vertex labels is revealed, and show that a linear-time local algorithm can achieve reconstruction
accuracy that is arbitrarily close to the information-theoretic  optimum.
We contrast this to the case of random graphs. Indeed, focusing on group synchronization on random regular graphs, we prove that the information-computation gap still persists even when a small amount of side information is revealed.
\end{abstract}

\section{Introduction}

Classical statistics focuses on problems in which a small number of
parameters needs to be estimated from data. As a consequence, it
is mostly unconcerned with computational complexity considerations. Fundamental limits to statistical estimation are proven on the basis of 
information-theoretic considerations.  On the contrary, in modern high-dimensional applications, it is not uncommon to
come across statistical models that require estimating simultaneously  thousands or even  millions of parameters. In this
setting, a large gap is often observed between information-theoretic limits and what is  achieved by the best known  polynomial-time algorithms.
Indeed, it is expected that no polynomial-time algorithm can achieve optimal statistical performance in general.
In specific classes of models, a precise information-computation gap has been conjectured on the basis of current knowledge (see, e.g., \cite{MezardMontanari,decelle2011asymptotic,richard2014statistical,lelarge2017fundamental,barbier2019optimal,celentano2019fundamental} and references therein).

As explained below, most of our understanding of this information-computation gap was developed by analyzing probabilistic models
with a high degree of exchangeability.  This suggests a natural question: \emph{Is the same gap 
present in models with other type of structures?}

Statistical estimation on graphs provides a rich and interesting setting to study this question. 
Let $G_n = (V_n,E_n)$ be a graph on $n$ vertices, $V_n=[n]$. Edges are assumed to be directed in an arbitrary way, i.e., they are ordered pairs $(u,v)\in V_n\times V_n$.
We associate to the vertices $u \in V_n$ random variables $\btheta = (\theta_{u})_{u\in V_n}\sim_{\text{iid}} \Unif(\cX)$, uniformly distributed on a finite alphabet $\cX$. For each edge $(u,v)\in E_n$, we observe  $Y_{uv} \in \cY$, where $\cY$ is also a finite alphabet. The observations are conditionally independent with 
$Y_{uv}| \btheta \sim Q(\,\cdot\,|\theta_{u},\theta_{v})$, where $Q$ is a probability kernel from $\cX\times \cX$ to $\cY$.
Given the edge observations $\bY$ (and, possibly, additional side information, see  below), the purpose is to estimate the vertex assignment $\btheta$. 

This model is general enough to include a broad variety of examples studied in the literature, including group synchronization, community detection, 
low-rank matrix estimation, and so on. 
As an example consider the  \emph{$\Z_q$--synchronization} problem (further examples are presented in Section \ref{sec:Examples}).
The unknown variables $(\theta_u)_{u\in V_n}$ are i.i.d.\ uniform 
in $\cX =\Z_q = \{0,\cdots,q-1\}$, which we identify with the cyclic group $\Z\slash q\Z$ with additive structure. 
Observations are noisy measurements of the difference between $\theta_u$ and $\theta_v$ for each edge $(u,v) \in E_n$:     
\begin{align}\label{Z_q_model}
Y_{uv} = 
\begin{cases}
\theta_u - \theta_v ~(\textup{mod} ~ q) &\mbox{with probability } 1-p,\\
w_{uv} &\mbox{with probability } p,
\end{cases}
\end{align}
where $(w_{uv})_{(u,v)\in E_n}$ is a collection of independent random variables $w_{uv}\sim\Unif(\cX)$, independent of $(\theta_u)_{u\in V_n}$.

In addition to the observations $\bY$, we consider independent observations $(\xi^{(\eps)}_u)_{u \in V_n}$ on the vertices of $G_n$:
\begin{align}\label{side_info}
\xi^{(\eps)}_{u} = 
\begin{cases}
\theta_{u} &\mbox{with probability } \eps,\\
\star &\mbox{with probability } 1-\eps,
\end{cases}
\end{align}
where $\star$ is a symbol not belonging to $\cX$, so that with probability $\eps$ the value of $\theta_u$ is directly observed. 
We will write $\cX_{\star} := \cX\cup\{\star\}$.
Following the information theory literature, we refer to this noise model as the Binary Erasure Channel, and denote it by 
\BEC($\epsb$). (It is customary to parametrize the \BEC ~by its erasure probability $\epsb = 1-\eps$.)
The parameter $\eps$ will be considered very small (eventually going to zero as $n$ becomes large).
The purpose of this side information is to break the occasional group symmetry (sign symmetry or cyclic shifts in the case of $\Z_q$) 
that would otherwise be preserved by the observations $Y$.

We consider two metrics for the estimation accuracy.
 In our first  definition, the goal is to estimate the $n \times n$ rank-one matrix $\bX_f$ whose entries are
 \begin{equation}\label{matrix_f}
 (X_f)_{u,v} := f(\theta_u)f(\theta_v), ~~~ u,v \in V_n,
 \end{equation} 
 where $f : \cX \mapsto \R$ is a given real-valued function. For instance by setting $f(\theta) = \bfone_{\theta=x}$ and then considering all
values of $x\in\cX$, this allows to estimate whether $\theta_u=\theta_v$ for each pair of vertices $u,v\in V_n$.
An estimator is a map $\hatbX:\cY^{E_n}\times \cX_{\star}^{V_n}\to \reals^{n\times n}$, i.e., 
a function of the observations $\bY$ and the side information $\xi^{(\eps)}$.  We evaluate its risk under the square loss
 \begin{equation}\label{risk}
\risk_n(\hatbX;f) := \frac{1}{n^2} \E\Big[\big\|\bX_f - \hatbX\big\|_{F}^2\Big]\, .
 \end{equation} 
 We denote by $\risk_n^{\text{Bayes}}(f)$
 the minimal achievable error, i.e., the one achieved by the posterior expectation 
 \begin{equation}\label{posterior_estimator}
 \hatbX^{\text{Bayes}} := \Big(\E\Big[f(\theta_u)f(\theta_v)|Y_{G_n}^{(\eps)}\Big]\Big)_{u,v \in V_n}.
 \end{equation}
(We have made use of the following notation: for a graph $G=(V,E)$, we denote by $Y^{(\eps)}_{G}$ the union of the vertex and edge observations over $G$: 
$Y^{(\eps)}_{G} = \{Y_{uv} : (u,v)\in E, \xi^{(\eps)}_u : u\in V\}$.)
Our second metric for estimation accuracy is the `overlap', and will be introduced in Section~\ref{sec:Amenable}, see Eq.~\eqref{overlap}.

Statistical estimation on graphs has motivated substantial amount of work. 
In this context, the first example of a statistical model with a large information-computation gap is probably the planted clique
problem \cite{jerrum1992large,alon2002concentration}. This can be recast in the general framework described above, with $G_n$
the complete graph over $n$ vertices (see Section \ref{sec:Examples}). Despite more than a quarter century of research, and the study
of increasingly powerful classes of algorithms \cite{feige2000finding,deshpande2013finding,barak2016nearly}, no known 
polynomial-time algorithm comes close to saturate the information-theoretic limits for  this problem.

In recent years, a much more refined picture of the information-computation gap has emerged, mainly through the careful analysis of 
a variety of models on sparse random graphs  (as well as models on dense graphs in a different noise regime than the hidden clique model).
We refer to Section \ref{sec:related_work} for a brief summary of this vast literature. In most of these models an information-computation
gap is observed, and has been precisely delineated. This gap is generally conjectured to remain unchanged if a small amount of side 
information is revealed\footnote{The careful reader will notice that this statement does not apply to the planted clique problem. If the label
of $\eps n$ random vertices is revealed (i.e., whether or not they belong to the clique), then it is easy to find planted cliques of size $k\gg (1/\eps)\log n$,
i.e., far below the best known polynomial algorithms for $\eps=0$. This behavior is however related to the fact that, in the planted clique problem, 
the labels' prior distribution is strongly dependent on $n$, as
revealed from the fact that the clique's size is sublinear in $n$.}, 
as in Eq.~\eqref{side_info}. As mentioned above, most of the theoretical work has focused however on random graphs (Erd\"os--R\'enyi 
random graphs, random regular graphs and their relatives). 
This motivates the following key question:
\begin{center}
\emph{Does an information-computation gap exist for statistical estimation on other types of graphs?}
\end{center}
In this paper, we consider the case of graph sequences that converge locally to amenable graphs. 
Roughly, these are graphs for which the boundary of large sets of vertices is negligible compared to their volume.
We refer to Section \ref{sec:Background} for a reminder on the relevant definitions. Our results are already
interesting for the simplest example of such graphs, namely large boxes $[1,L]\times \cdots\times [1,L]$ in the $d$-dimensional grid
$\integers^d$ (with $L = n^{1/d}$).

Our main finding is that no information-computation gap exists for such graphs (as long as the gap is defined in terms of polynomial- 
versus non-polynomial time algorithms). A specific formalization of this finding is given below, and proved in Section~\ref{sec:Amenable}. 
\begin{customthm}{A}\label{thm:Risk}
Let $f:\cX\to \reals$ be a function with $\E[f(\theta)] = 0$ for $\theta\sim \Unif(\cX)$.
Let $G_n = (V_n,E_n)$ be a sequence of finite graphs (with $|V_n| = n$) converging locally--weakly to a random rooted graph $(G,o)$ which is infinite, locally-finite, almost surely anchored--amenable and tame. 
Then for each $l\in\naturals$ there exists an estimator $\hatbX^{(l)}:\cY^{E_n}\times \cX_{\star}^{V_n}\to \reals^{n\times n}$, with runtime $\bigo(n^2)$, such that the following 
holds.
For almost every $\eps>0$, we have 
\[\lim_{l \to \infty} \lim_{n \to \infty} \big\{\risk_n(\hatbX^{(l)};f) - \risk_n^{\textup{Bayes}}(f)\big\}=0.\]
\end{customthm}
The notions of local--weak convergence, anchored--amenability and tameness will be defined in Section~\ref{sec:Background}. 
More in detail, we present the following contributions:
\begin{description}
\item[No information-computation gap on amenable graphs.]  Theorem \ref{thm:Risk} provides a concrete formalization
of the general statement that statistically optimal estimation can be performed using polynomial time algorithms on 
(asymptotically) amenable graphs. In fact, we will prove that this follows from a more fundamental result, establishing 
that the vertex marginals of the posterior $\prob\big(\btheta\;  |Y^{(\eps)}_{G_n}\big)$ can be computed to arbitrary accuracy
in polynomial time, for almost all
values of $\eps$, on asymptotically amenable graphs, cf. Section  \ref{sec:Amenable}. 

Note that approximating the Bayes estimator $\hatbX^{\text{Bayes}}$, Eq.\ \eqref{posterior_estimator}, requires to approximate the joint distribution of pairs
of well separated vertices. However, we will use a decoupling argument to reduce ourselves to the case of vertex marginals.
\item[Local algorithms.] Our proof that  vertex marginals can be computed efficiently follows from an even stronger, and somewhat surprising fact (as above, holding for almost all $\eps$). The marginal at a vertex $v$ can be well approximated by computing the marginal with respect to the posterior
given observations in a large constant-size ball centered at $v$. In other words, the marginal can be approximated by a local algorithm.
The reason for this phenomenon can be explained in information theoretic terms. We will prove that the average conditional mutual information between a random vertex in a region $S\subseteq V$, and the boundary of $S$, $I\big(\theta_v;\theta_{\partial S}|Y_{S}^{(\eps)}\big)$ is upper bounded by $|\partial S|/|S|$. 
Hence, for amenable graphs, the effect of the boundary information is generally negligible.
\item[Robust information-computation gap on random regular graphs.]  We provide a counter-example, by showing that the conclusions
at the previous points do not hold for random regular graphs, converging locally to $k$-regular trees, which are non-amenable.
As  mentioned above, several cases of statistical estimation problems have been observed
to present an information-computation gap, when the underlying graph is random. While this gap is often expected to be robust to side information about the vertices,
we are not aware of any result that explicitly establishes robustness---in the setting of the present paper. We consider the $\integers_q$--synchronization problem on 
random $k$-regular graphs. We prove that, for a large range of the model parameters and all $\eps$ small enough: $(i)$ There exists a statistical estimator
that achieves non-trivial reconstruction accuracy uniformly as $\eps\to 0$; $(ii)$ Local algorithms can only achieve accuracy that vanishes as $\eps\to 0$.
\end{description}

\section{Related literature}
\label{sec:related_work}

As mentioned in the introduction, large information computation gaps were observed in a number of statistical estimation problems, when the underlying structure is 
a random graph, the complete graph, or close relatives. An incomplete list includes community detection in the stochastic block model 
\cite{decelle2011asymptotic,massoulie2014community,mossel2018proof,abbe2017community},
high-dimensional linear regression and generalized linear models \cite{barbier2019optimal,celentano2019fundamental},
 low-rank matrix estimation and sparse principal component analysis
\cite{johnstone2009consistency,amini2008high,berthet2013complexity,ma2015sum,lelarge2017fundamental}, tensor principal component analysis \cite{richard2014statistical,hopkins2015tensor,hopkins2017power}, tensor decomposition, and so on.

In many of these models, two types of results are established. On one hand an `information-theoretic' analysis allows to characterize the optimal
statistical accuracy that is achieved by an ideal estimator. On the other, specific classes of polynomial-time algorithms are analyzed.
Sometimes the resulting statistical estimation limits are stated in terms of specific goals such as `weak recovery' or `exact recovery': in the
present paper we consider the general goal of estimation with certain expected accuracy, or risk.

The most frequently analyzed classes of algorithms have been spectral methods, local algorithms, and convex relaxations in the sum-of-squares hierarchy. 
A remarkable dichotomy has emerged from these works. Roughly speaking, in all the examples  we know of, either highly sophisticated semidefinite programming hierarchies
fail, or simple combinations of spectral methods and local algorithms succeed. The behavior of the latter is in turn characterized by studying
the Bayes optimal local algorithm (belief propagation), in the presence of a small amount of side information.
Partial rationalizations of this surprising dichotomy were given in \cite{hopkins2016fast,hopkins2017power,fan2017well}. 
Motivated by this work, our analysis of $\integers_q$--synchronization on
random regular graphs (Section \ref{sec:RandRegular}) will focus on the same simple algorithm: belief propagation in the presence of side information.
As common in the literature, we will use the weak recovery threshold for this algorithm as a proxy for the fundamental algorithmic threshold.

Let us stress that our main focus is statistical estimation on amenable graphs.
Versions of this problem have been studied in a few  recent papers \cite{abbe2017group,sankararaman2018community,polyanskiy2018application,abbe2018information,abbe2018graph}. In particular, \cite{abbe2017group} proved the existence of a weak recovery threshold for 
$\integers_q$--synchronization\footnote{For $d=2$, \cite{abbe2017group} proves that a threshold exists in the case $q=2$, 
and indeed the same is expected to hold for $q\ge 3$ as well. For $d=1$ no non-trivial threshold exists in that weak recovery is always impossible.}
on grids in $d\ge 3$ dimensions.
However,  in contrast with random graphs, no explicit characterization exists (or is likely to exist) for the optimal statistical accuracy nor, in general, for the location of
weak recovery thresholds. This poses a clear challenge to us: we want to prove that the optimal statistical accuracy can be achieved by polynomial time algorithms, 
but \emph{we do not have an explicit characterization for the target accuracy}. Indeed, our proof will be purely conceptual.

Let us finally mention that it is well understood that certain algorithmic tasks are easy on graphs that can be embedded
well in $\reals^d$ (e.g., on grids). For instance, approximate optimization of a function that decomposes as a sum of edge terms over a grid is easy, by partitioning
the grid into large boxes. Unfortunately, these ideas do not have direct implications on the questions addressed in this paper. Even if we can find an 
approximate-maximum likelihood assignment of the unknown variables $\theta_i$, this is not guaranteed to have any good statistical properties, 
let alone achieve optimal estimation error. Inference and estimation do not reduce to optimization.

\section{Background}
\label{sec:Background}

\subsection{Further examples}
\label{sec:Examples}

It is interesting to check that the framework defined in the introduction is broad enough to encompass a variety of models of interest.

\vspace{0.25cm}

\noindent{\bf Spiked Wigner and Wishart models.} 
Low-rank plus noise models are ubiquitous in statistics and signal processing \cite{JohnstoneICM}, and can be recast in the language of the present paper.
As an example, consider the case of a signal vector $\btheta\in\reals^n$, with i.i.d.\ components, and assume we observe the rank-one-plus-noise
matrix  $\bY = \btheta \btheta^\top + \sigma_n\bW$. 
Here $\bW$ is a noise matrix, with --for instance-- $W_{uv}\sim \normal(0,1)$ and $\sigma_n$ controls the noise level. 

We take $G_n$ to be the complete graph, and $(\theta_u)_{u\in V_n}$ be i.i.d.\ random variables\footnote{Unlike for the model described in the introduction, the variables $\theta_u$'s
typically take any value in $\reals$, and their distribution is non-uniform. However, it is easy to reduce from one case to the other. For instance, 
we can let $Y_{uv} = \normal(h(\theta_u)h(\theta_v),\sigma^2_n)$. We can choose the nonlinear function $h:\reals\to\reals$ so that $h(\theta_v)\sim P_0$ when $\theta_v\sim\Unif([0,1])$.} 
from a distribution $P_{\theta}$ on $\R$. 
Observations on the edges are given by
\begin{align}
Y_{uv} \sim Q(\,\cdot\,|\theta_{u},\theta_v) = \normal(\theta_u\theta_v;\sigma_n^2 ) \, ,
\end{align} 
where $\normal(\mu,\sigma^2)$ denotes the Gaussian distribution.  

This example can be easily generalized. For instance, higher rank models can be produced by taking $\theta_u\in \reals^r$, $r\ge 1$ fixed. Rectangular (non-symmetric) 
random matrices of dimensions $n_1\times n_2$, can also be produced by setting $n=n_1+n_2$.
In this case  $\theta_v = (\zeta_v,b_v)$ where $\zeta_u\in\reals^r$ and $b_v\in \{1,2\}$ depending whether $v$ belongs to the first $n_1$ vertices (left factor) 
or the last $n_2$ ones (right factor).

\vspace{0.25cm}

\noindent{\bf Community detection.}  The stochastic block model is a popular model for community detection in networks.
The model is parametrized by a symmetric `connectivity' matrix  $(c_{rs})_{1\le r,s\le q}$, whereby $c_{r,s}\in [0,1]$ is the expected edge density between
vertices in communities $r$ and $s$. (For the sake of simplicity, we consider here the `balanced' case in which the $q$ communities have all equal expected size.)
Each vertex $v\in V_n$ is assigned a label $\theta_v\in [q]$ independently and uniformly at random. Conditional on $\btheta$, 
we generate a graph $\tG_n = (V_n,\tE_n)$ by connecting  vertices $u,v$ independently with probability $\prob((u,v)\in \tE_n|\btheta) = c_{\theta_u,\theta_v}$. 

We can encode this model in our general framework as follows.
 The graph $G_n$ is the complete graph, and observe $Y_{uv} \in \{0,1\}$ on every edge, where $Q(Y_{uv} = 1 | \theta_u=r,\theta_v=s) = c_{r,s}$.
The connection with the standard description is given by the correspondence $\{Y_{uv} = 1\}\;\; \Leftrightarrow\;\; \{(u,v)\in\tE_{uv}\}$.
The same encoding can be used for the planted clique problem.

\vspace{0.25cm}

Let us note that although the above models are special cases of our framework, we will focus in the rest of the paper onto graphs whose 
local--weak  limit (to be defined shortly) is locally finite.  This rules out graphs with diverging typical degree (in particular the complete graph).    

\subsection{Local--weak convergence and amenability}

For the reader's convenience, we collect here some relevant graph-theoretic definitions, referring to \cite{benjamini2001recurrence,aldous2007processes,lyons2017probability} for more details. In this paper, all graphs have a finite or countably infinite vertex set, are connected, and are \emph{locally finite}; i.e., all vertices have finite degree.   
A \emph{rooted} graph $(G,o)$ is a graph $G$ together with a choice of a vertex $o \in V(G)$, called the \emph{root} of $G$. We say that two rooted graphs $(G,o)$ and $(G',o')$ are isomorphic---and we write $(G,o) \equiv (G',o')$---if there exists an edge--preserving and root--preserving bijective map $ \phi : V(G) \mapsto V(G')$, i.e., $(u,v) \in E(G) \Leftrightarrow (\phi (u),\phi(v)) \in E(G')$, and $\phi(o) = o'$. 
For an integer $l\ge 0$, define $[G,o]_l$ to be the rooted subgraph spanned by a ball of radius $l$ around the root $o$ on $G$: this is the rooted graph $((V_l,E_l),o)$ where $V_l = B_G(o,l) := \{u \in V(G): d_{G}(o,u) \le l\}$, and $E_l = \{(u,v)\in E : u,v \in V_l\}$. Here, $d_G$ is the graph distance in $G$.
 
 \begin{definition} \label{def:local_convergence}
A sequence of rooted graphs $(G_n,o_n)_{n\ge 1}$ is said to \emph{converge locally} to a rooted graph $(G,o)$, and we write $(G_n,o_n) \xrightarrow[]{loc.} (G,o)$, if for every radius $l \ge 0$, there exists $n_0 \ge 0$ such that $[G_n,o_n]_{l} \equiv [G,o]_l$ for all $n \ge n_0$.
\end{definition}
This notion of convergence endows the set $\mathcal{G}_*$ (of $\equiv$--equivalence classes) of rooted graphs with a metrizable topology, called the topology of local, or Benjamini--Schramm, convergence~\cite{benjamini2001recurrence}. This gives $\mathcal{G}_*$ the structure of a complete separable metric space. Now we can define $\cuP(\mathcal{G}_*)$, the space of probability measures on $\mathcal{G}_*$ when endowed with its Borel $\sigma$--algebra. Then we endow $\cuP(\mathcal{G}_*)$ with the usual topology of weak convergence. 

From a \emph{finite} deterministic graph $G$, we can construct a random rooted graph $(G,o)$ by choosing the root $o$ uniformly at random from $V(G)$. We denote the law of this random rooted graph by $\rho_G \in \cuP(\mathcal{G}_*)$.     

\begin{definition}\label{def:local--weak}
A sequence of finite graphs $(G_n)_{n \ge 1}$ is said to converge locally--weakly to a random rooted graph $(G,o)$ if the sequence of probability measures $(\rho_{G_n})_{n \ge 1}$ converges weakly to a probability measure $\rho \in \cuP(\mathcal{G}_*)$, which is the law of $(G,o)$. 
\end{definition}
In other words, the definition requires that given a fixed finite connected rooted graph $(H,o')$ and a fixed radius $l$, the probability $\P\big([G_n,o_n]_l \equiv (H,o')\big)$ converges to $\P\big([G,o]_l \equiv (H,o')\big)$ as $n \to \infty$.    

Probability measures $\rho \in \cuP(\mathcal{G}_*)$ that are local--weak limits of sequences of \emph{finite} graphs as per Definition~\ref{def:local--weak} (such measures are called \emph{sofic} in the literature) inherit a important stationarity property which roughly expresses the intuition that the random graph $G$ should ``look the same" when viewed from any of its vertices. A formal definition takes the form of a \emph{mass--transport principle} termed \emph{unimodularity}~\cite{aldous2007processes}: Similarly to $\mathcal{G}_*$, we define $\mathcal{G}_{**}$ the space of $\equiv$--equivalence classes of \emph{doubly}--rooted graphs $(G,o,o')$ where the isomorphy relation $\equiv$ and local convergence as per Definition~\ref{def:local_convergence} are both extended in the natural way.   

\begin{definition} \label{def:unimodular}     
A measure $\rho \in \cuP(\mathcal{G}_*)$ is unimodular if for every Borel function $f:\mathcal{G}_{**} \to \R_+$,
\[\E_{\rho}\Big[\sum_{u \in V(G)} f(G,o,u)\Big] = \E_{\rho}\Big[\sum_{u \in V(G)} f(G,u,o)\Big],\]
when $(G,o)\sim \rho$.
\end{definition}
It is clear that if $G$ is finite then $\rho_{G}$ is unimodular, since the root is chosen uniformly at random. Furthermore, the property of unimodularity is closed in the topology of local--weak convergence~\cite{aldous2007processes}, hence \emph{all  local--weak limits of sequences of finite graphs are unimodular}.  

Next, we define the key concept of \emph{anchored--amenability}.
\begin{definition}\label{def:amenable}
An infinite rooted graph $(G,o)$ where $G=(V,E)$ is said to be anchored--amenable if its Cheeger constant anchored at $o$ is zero: 
\[\inf \Big\{|\partial S|/|S| : S \subset V \textup{ finite}, o \in S \Big\} = 0.\]
Here, $\partial S = \{u \in S: \exists v \notin S, (u,v)\in E\}$ is the vertex-boundary of the set $S \subseteq V$. 
\end{definition}
We will informally use the phrase `asymptotically amenable' to refer to graph sequences that converge locally--weakly to almost surely anchored--amenable graphs.

Observe that if $G$ is vertex--transitive, the above statement does not depend on the root $o$, and anchored--amenability reduces to the more classical notion of amenability of (non-rooted) graphs.    
For instance, the Euclidean lattice $\Z^d$ is amenable, the $k$-regular tree is not (both graphs being transitive).

Observe that if $(G,o)\sim \rho$ is almost surely anchored--amenable, there exists a sequence of finite sets $S_{k} \subset V$ such that $o\in S_k$ and which `witnesses' the amenability of $G$: $|\partial S_{k}|/|S_{k}| \longrightarrow 0$ as $k \to \infty$.
Moreover, this random sequence can be chosen in a \emph{measurable} way as a function of the rooted graph $(G,o)$. Indeed, we can for instance label the vertices of $G$ by $\naturals$, the root being labelled by $0$, and for every $k\ge 1$, choose the first finite set $S_k \subseteq V(G)$ (among countably many) in the lexicographic ordering such that $|\partial S_k|/|S_k| \le 2^{-k}$ and $o \in S_k$. For clarity we make this dependence explicit: $S_k = S_k(G,o)$. 
 We require a technical condition regarding such sets $S_k$.

\begin{definition}\label{def:tame}
We say that $\rho \in \cuP(\mathcal{G}_*)$ is \emph{tame} if it is supported on anchored--amenable rooted graphs, and there exists a sequence 
$\{S_k\}_{k\ge 1}$  of sets that witnesses anchored--amenability (i.e., such that $S_k(G,o)$ is a measurable function of $(G,o)$, and $|\partial S_k|/|S_k|\to 0$
almost surely) such that the following holds. For every $\eta>0$ there exists  $\delta>0$ such that
\begin{equation}\label{inverse_volume}
\limsup_{k \to\infty} \rho\Big((G,o)~:\sum_{u \in V(G)} \frac{ \bfone_{o\in S_k(G,u)}}{|S_k(G,u)|} \le \delta\Big) \le \eta.
\end{equation}
By extension, we say that the random rooted graph $(G,o)$ is tame if its law $\rho$ is tame.
\end{definition}

Intuitively, tameness is satisfied when the size of the neighborhoods $S_k(G,u)$ of each vertex $u$ around the root is
comparable with $S_k(G,o)$.  To discuss it further, it is useful to introduce the random variables
\begin{align}
\alpha_k(G,o) := \sum_{u\in V(G)} \frac{\bfone_{o \in S_k(G,u)}}{|S_k(G,u)|} \,.
\end{align}
The tameness condition requires a uniform upper bound on the lower tail of $(\alpha_k(G,o))_{k\ge 1}$.
An equivalent way to express this condition is to say that the sequence of random variables $(1/\alpha_k(G,o))_{k\ge 1}$ is \emph{tight} when $(G,o) \sim \rho$.

Note that  $\E_{\rho}[\alpha_k(G,o)]=1$ whenever $\rho$ is unimodular. Indeed, by a direct application of the mass-transport principle
(for the function $f(G,o,u) = \frac{\bfone_{o \in S_k(G,u)}}{|S_k(G,u)|}$)
\begin{align*}
\E_{\rho}[\alpha_k(G,o)] &= \E_{\rho}\left[\sum_{u\in V(G)} \bfone_{o \in S_k(G,u)}\frac{1}{|S_k(G,u)|} \right] \\
&= \E_{\rho}\left[\sum_{u\in V(G)} \bfone_{u \in S_k(G,o)}\frac{1}{|S_k(G,o)|} \right] =  \E_{\rho}\left[\frac{|S_k(G,o)|}{|S_k(G,o)|} \right] = 1\, .
\end{align*}
Moreover, tameness is satisfied if $\rho$ is supported on vertex-transitive graphs,
and in this case $\alpha_k(G,o)=1$ almost surely. Indeed, assume $\rho$ is supported on a single vertex-transitive graph.
Then $\rho$ is unimodular whence $\E_{\rho}[\alpha_k(G,o)]=1$, but $\alpha_k(G,o)$ is non-random and therefore $\alpha_k(G,o)=1$.
In the general case where $\rho$ is not an atom, since  $\alpha_k(G,o)=1$ almost surely conditional on $(G,o)$, we have $\alpha_k(G,o)=1$ almost surely unconditionally as well.

We next provide a few examples of graphs that are anchored-amenable and tame.

\vspace{0.25cm}

\noindent{\bf Example 1 (Percolation clusters).} Consider the $d$-dimensional
grid $\mathbb{L}^d$, i.e., $V(\mathbb{L}^d) = \integers^d$, and edges connect vertices at distance one $E(\mathbb{L}^d) = \{(\bx,\by)\in \integers^d:\; \|\bx-\by\|=1\}$.
Remove edges independently with probability $1-p$ and let $G=G_p(o)$ be the connected component of the origin $o=\bzero$.
We consider $p>p_c$, the percolation threshold on $\mathbb{L}^d$ so that $G$ is infinite with positive probability, and condition on the 
event that $G$ is indeed infinite.
In this case we can take $S_k(G,\bx)$ to be the subset of vertices contained in the $\ell_{\infty}$ ball of radius $\ell_k$ around $\bx$:
$S_k(G,\bx)= \{\by\in V(G): \|\by-\bx\|_{\infty}\le \ell_k\}$, for a deterministic sequence of radii $\ell_k\uparrow\infty$. A classical result of Newman and Schulman~\cite{newman1981number} implies $|S_k(G,o)|/\ell_k^d\to c_0$ almost surely for some non-random constant $c_0>0$. Further
$\partial S_k\subseteq\{ \bx\in \integers^d: \ell_k-1\le \|\bx\|_\infty\le \ell_k\}$ whence $|\partial S_k|\le c_1\ell_k^{d-1}$.
Hence, there exists a random $k_0<\infty$ such that almost surely $|\partial S_k|/|S_k|\le (2c_1/c_0)\, \ell_k^{-1}\to 0$, 
for all $k\ge k_0$.

Further, $|S_k(G,o)|\le c_2\ell_k^d$, and $o\in S_k(G,\bx)$ if and only if $\bx\in S_k(G,o)$. Therefore
\begin{align*}
\alpha_k(G,o)&\ge  \sum_{\bx\in V(G)} \bfone_{o\in S_k(G,\bx)}\frac{1}{c_2\ell_k^d} \\
&\ge  \frac{1}{c_2\ell_k^d} \sum_{\bx\in V(G)} \bfone_{\bx \in S_k(G,o)} = \frac{1}{c_2\ell_k^d} |S_k(G,o)|\, .
\end{align*}
Therefore $\underset{k\to\infty}{\liminf}\alpha_k(G,o) \ge c_0/c_2>0$ a.s., whence $\rho\big(\alpha_k(G,o)\le \delta\big) \to 0$
for all $\delta< c_0/c_2$.

\vspace{0.25cm}

\noindent{\bf Example 2 (Random geometric graph).} In this case the vertices are the points of a Poisson point process on $\reals^d$ with
constant intensity $\gamma$. Any two vertices $\bx,\by$ are connected by an edge if and only if $\|\bx-\by\|_2 \le r$ for a fixed radius $r>0$.
We choose the root $o\in V(G)$ as the closest vertex to the origin $\bzero$ and let $G$ be the connected component of  $o\in V$. This graph is infinite with positive probability provided $\gamma$ is larger than the percolation threshold $\gamma_c$ for this model~\cite{penrose2003random}.

The calculations for Bernoulli bond percolation on $\integers^d$ can be applied almost verbatim to the random geometric graph. In particular, letting $S_k(G,o) = \{\bx\in V(G):\, \|\bx\|_\infty \le \ell_k\}$ witnesses anchored--amenability and satisfies the tameness assumption.


\section{Results for asymptotically amenable graphs}
\label{sec:Amenable}

Recall that $Y^{(\eps)}_{G} $ refers to the union of the vertex- and edge-observations over $G$: $Y^{(\eps)}_{G} \:=\{Y_{uv} : (u,v)\in E, \xi^{(\eps)}_u : u\in V\}$.
A natural way to construct an estimator $\hbtheta$ is to first estimate the posterior marginals of $\btheta$ given $Y^{(\eps)}_{G_n}$ at every vertex:
\begin{equation}\label{marginals}
\mu_{G_n,u}(x) := \P\Big(\theta_{u} = x \big| Y^{(\eps)}_{G_n}\Big),~~ \mbox{for } u \in V_n \mbox{ and } x\in \cX.
\end{equation}   
Letting $(\widehat{\mu}_{u})_{u \in G_n}$ be such  estimates of the posterior marginals, we can construct $\hbtheta$, for instance, by independently sampling from the marginals: $\hat{\theta}_u \sim_{\text{ind}} \widehat{\mu}_{u}$, for all $u \in V_n$.

Of course, computing the exact posterior probabilities $\mu_{G_n,u}(x)$ is in general intractable. 
As a tractable  alternative, we can compute a \emph{local} version of the vertex marginals by using  only  observations in a ball of radius $l$ around each vertex.
For $u \in V_n$ and $x \in \cX$, let 
\begin{equation}\label{local_marginal}
\widehat{\mu}_{G_n,u,l}(x) := \P\Big(\theta_u = x \big| Y^{(\eps)}_{B_{G_n}(u,l)}\Big).
\end{equation}
(Recall that $B_{G_n}(u,l) = \{v \in V: d_{G_n}(u,v)\le l\}$ denotes the set of vertices within graph distance $l$ form $u$ in $G_n$.)  
The local marginals $\widehat{\mu}_{G_n,u,l}(x)$ can be computed with complexity at most $|\cX|^{|B_{G_n}(u,l)|}$ per vertex. The complexity
of estimating all the vertex marginals  is linear or nearly linear, under additional assumptions. In particular:
\begin{itemize}
\item If $G_n$ has degree bounded by $k_{\max}$ independently of $n$, then $|B_{G_n}(u,l)|\le k_{\max}^{l+1}$.  
\item If $G_n$ converges to a locally finite unimodular graphs, then 
\[\lim_{M\to\infty}\lim_{n \to \infty}\prob(|B_{G_n}(o_n,l)|\ge M) =0.\]
In other words, for each $\eps$, there exists $M(\eps)$ such that, for all $n$ large enough, all but a fraction
$\eps$ of the vertices $u$ have neighborhood of size bounded by $M(\eps)$. Hence $\mu_{G_n,u}(x)$ can be estimated for all but a fraction $\eps$ of the 
vertices in linear time.
\end{itemize}
Notice that we can safely neglect $o(n)$ atypical vertices for our purposes.
For instance, the matrix estimation risk \eqref{risk} is  bounded away in the present setting
(unless the channel $Q$ is noiseless), and therefore ignoring $o(n)$ vertices has a negligible impact on the asymptotic risk.

Do the local estimates $\widehat{\mu}_{G_n,u,l}$ provide good approximations of the actual marginals $\mu_{G_n,u}$?
Our  first result shows that this is the case for asymptotically amenable graphs, for almost all $\eps>0$, and on average over vertices in $G_n$.
\begin{customthm}{B}\label{main}
Let $G_n = (V_n,E_n)$ be a sequence of finite graphs (with $|V_n| = n$) that converges locally--weakly to random rooted graph $(G,o)\sim \rho$ which is almost surely anchored--amenable and tame.   
Then for almost every $\eps>0$,
\[\lim_{l \to \infty} \lim_{n \to \infty} \frac{1}{n} \sum_{u \in V_n} \E\big[d_{\sTV}(\widehat{\mu}_{G_n,u,l},\mu_{G_n,u})\big]= 0.\]
 \end{customthm}

The proof of this theorem follows from a technical result which we will present next. 

We define an observation model $(\btheta,\bY,\xi^{(\eps)})$ on the infinite random graph $G$ exactly as for the finite graphs $G_n$.  
We then let 
\[\mu_{G,o}(x) := \P\Big(\theta_o = x \big| (G,o), Y^{(\eps)}_{G}\Big),\] 
where we condition on the realization of the rooted graph and on $\sigma$-algebra generated by the sequence of 
random variables $\big(Y^{(\eps)}_{B_G(o,l)}\big)_{l\ge 0}$. 
Equivalently, we can also define $\mu_{G,o}(x)$ as the almost-sure limit of the sequence $\big(\P(\theta_o = x | (G,o), Y^{(\eps)}_{B_G(o,l)})\big)_{l\ge 0}$, where convergence is guaranteed by L\'evy's upward theorem. 
We have the following general relation between marginals on the finite graphs $G_n$, and marginals on the infinite rooted graph $(G,o)$.
 \begin{proposition}\label{L2_statement}
Under the conditions of Theorem~\ref{main}, we have for all $x \in \cX$ and almost every $\eps>0$,
 \begin{align}
 \lim_{l \to \infty} \lim_{n \to \infty} \frac{1}{n} \sum_{u \in V_n} \E\big[\widehat{\mu}_{G_n,u,l}^2(x)\big] &= \E\big[\mu_{G,o}^2(x)\big],\label{local}\\
\mbox{and} \quad 
\lim_{n \to \infty} \frac{1}{n} \sum_{u \in V_n} \E\big[\mu_{G_n,u}^2(x)\big] &= \E\big[\mu^2_{G,o}(x)\big].\label{global}
\end{align} 
(The expectation on the right-hand side is w.r.t.\ the randomness of $Y^{(\eps)}_G$ and $(G,o)$.)
\end{proposition}

The proof of Proposition~\ref{L2_statement} is presented in Section \ref{sec:Proof_L2Statement}. 
Theorem~\ref{main} is a consequence of Proposition~\ref{L2_statement} as shown below.
\begin{proof}[Proof of Theorem \ref{main}]
We claim that $\E\big[\widehat{\mu}_{G_n,u,l}(x)\mu_{G_n,u}(x)\big] = \E\big[\widehat{\mu}_{G_n,u,l}(x)^2\big]$. Indeed, by conditioning on $Y^{(\eps)}_{B_{G_n}(u,l)}$ we obtain 
\begin{align*} 
\E\big[\widehat{\mu}_{G_n,u,l}(x)\mu_{G_n,u}(x)\big] &= \E\big[ \P\big(\theta_u = x | Y^{(\eps)}_{B_{G_n}(u,l)}\big) \E\big[\P\big(\theta_u = x | Y^{(\eps)}_{G_n}\big) | Y^{(\eps)}_{B_{G_n}(u,l)}\big] \big]\\
&=\E\big[\P\big(\theta_u = x | Y^{(\eps)}_{B_{G_n}(u,l)}\big)^2\big] = \E\big[\widehat{\mu}_{G_n,u,l}(x)^2\big].
\end{align*}
Now we use the fact that for two measures $\mu$ and $\nu$ on $\cX$, $d_{\sTV}(\mu,\nu) = \half \|\mu - \nu\|_{\ell_1} \le \half \sqrt{|\cX|} \cdot \|\mu - \nu\|_{\ell_2}$:
\begin{align*}
\E\big[d_{\sTV}(\widehat{\mu}_{G_n,u,l},\mu_{G_n,u})\big]^2&\le \frac{1}{4} |\cX|\E\left[\|\widehat{\mu}_{G_n,u,l}-\mu_{G_n,u}\|_{\ell_2}^2\right]\\
&=\frac{1}{4} |\cX| \sum_{x\in\cX} \left(\E\big[\E\big[\mu_{G_n,u}(x)^2\big] - \widehat{\mu}_{G_n,u,l}(x)^2\big]\right)\, .
\end{align*}
(Here and below $\|\mu-\nu\|_{\ell_p}$ denotes the $\ell_p$ norm of the vector $(\mu(x)-\nu(x))_{x\in\cX}$.)
The claim follows by averaging over $u\in V_n$, and applying Proposition~\ref{L2_statement}.
\end{proof}

Note that Theorem~\ref{main} is not sufficient to establish Theorem \ref{thm:Risk} about the optimality of polynomial-time 
algorithms to estimate the pairwise correlations $(X_f)_{u,v} = f(\theta_u)f(\theta_v)$. Indeed, the latter requires to approximate 
the joint distribution of $\theta_u$, $\theta_v$ for $u, v\in V_n$ two arbitrary vertices.
In order to achieve this goal, we define a \emph{decoupled} estimator:
\begin{align}\label{estimated_matrix}
 \widehat{X}^{(\sdec)}_{uv} &:= \E\Big[f(\hat{\theta}_u)\big| Y^{(\eps)}_{G_n}\Big]\cdot\E\Big[f(\hat{\theta}_v)\big|Y^{(\eps)}_{G_n}\Big]\\
 &= \Big(\sum_{x\in \cX} \mu_{G_n,u}(x)f(x)\Big) \cdot \Big(\sum_{x\in \cX} \mu_{G_n,v}(x)f(x)\Big),~~~ u, v \in V_n.\nonumber
 \end{align}
Note that $\hatbX^{(\sdec)}$ may \emph{a priori} have suboptimal accuracy. This is however not the case for almost all $\eps$.
\begin{proposition}\label{decoupling_bayes}
Let $\hatbX^{(\sdec)}\in\R^{n \times n}$ be defined as per Eq.~\eqref{estimated_matrix}.
Then for almost every $\eps>0$, 
\[
\lim_{n \to \infty} \big\{\risk_n(\hatbX^{(\sdec)};f) - \risk_n^{\textup{Bayes}}(f)\big\} =0\, .
\]
\end{proposition}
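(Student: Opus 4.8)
The plan is to reduce the statement, in two elementary steps, to a mutual-information estimate that is essentially the same ingredient that drives Proposition~\ref{L2_statement}.

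\textbf{Reduction to a mutual-information bound.} Since $\hatbX^{(\sdec)}$ is a measurable function of $Y^{(\eps)}_{G_n}$, the orthogonality of the posterior mean gives, for every pair $u,v$ and $X_{uv}:=f(\theta_u)f(\theta_v)$, the exact identity $\E[(X_{uv}-\widehat X^{(\sdec)}_{uv})^2]=\E[\var(X_{uv}\mid Y^{(\eps)}_{G_n})]+\E[(\widehat X^{\textup{Bayes}}_{uv}-\widehat X^{(\sdec)}_{uv})^2]$. Summing over $u,v$, dividing by $n^2$, and using $\widehat X^{(\sdec)}_{uv}=\E[f(\theta_u)\mid Y^{(\eps)}_{G_n}]\,\E[f(\theta_v)\mid Y^{(\eps)}_{G_n}]$ together with $\widehat X^{\textup{Bayes}}_{uv}=\E[f(\theta_u)f(\theta_v)\mid Y^{(\eps)}_{G_n}]$, I obtain
\[
\risk_n(\hatbX^{(\sdec)};f)-\risk_n^{\textup{Bayes}}(f)=\frac1{n^2}\sum_{u,v\in V_n}\E\big[\cov\big(f(\theta_u),f(\theta_v)\mid Y^{(\eps)}_{G_n}\big)^2\big].
\]
If $\nu_{uv}$ is the joint posterior law of $(\theta_u,\theta_v)$ given $Y^{(\eps)}_{G_n}$, with marginals $\mu_u,\mu_v$, then $|\cov(f(\theta_u),f(\theta_v)\mid Y^{(\eps)}_{G_n})|\le 2\|f\|_\infty^2\, d_{\sTV}(\nu_{uv},\mu_u\otimes\mu_v)$; squaring, applying Pinsker's inequality, and taking expectations (so that $\E[D(\nu_{uv}\,\|\,\mu_u\otimes\mu_v)]=I(\theta_u;\theta_v\mid Y^{(\eps)}_{G_n})$) yields $\E[\cov(f(\theta_u),f(\theta_v)\mid Y^{(\eps)}_{G_n})^2]\le 2\|f\|_\infty^4\, I(\theta_u;\theta_v\mid Y^{(\eps)}_{G_n})$. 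It is therefore enough to show $n^{-2}\sum_{u,v}I(\theta_u;\theta_v\mid Y^{(\eps)}_{G_n})\to 0$ for a.e.\ $\eps$.

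\textbf{Killing the mutual information using amenability.} For a finite $S\subseteq V_n$ with $u\in S$, $v\notin S$, conditioning on $Y^{(\eps)}_S$ makes the posterior of $\btheta$ a Markov random field whose interactions lie entirely within $S$; hence, once one also conditions on $\theta_{\partial S}$, the label $\theta_u$ becomes independent of $\theta_v$ and of all observations outside $S$. Combining this conditional independence with the chain rule gives the deterministic, $v$-free bound $I(\theta_u;\theta_v\mid Y^{(\eps)}_{G_n})\le I(\theta_u;\theta_{\partial S}\mid Y^{(\eps)}_{S})$. Fix $\delta>0$: by amenability of $G$ there is a finite $F\subset V$ with $|\partial F|/|F|<\delta$, and by transitivity of $G$ together with the averaged boundary estimate advertised in the introduction and established along the way to Proposition~\ref{L2_statement} (namely $|F|^{-1}\sum_{w\in F}I(\theta_w;\theta_{\partial F}\mid Y^{(\eps)}_F)\le|\partial F|/|F|$, valid for a.e.\ $\eps$), some translate $F^\star\ni o$ satisfies $I(\theta_o;\theta_{\partial F^\star}\mid Y^{(\eps)}_{F^\star})<\delta$. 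Since $G_n\to G$ locally--weakly and $G$ is transitive, all but $o(n)$ vertices $u$ admit a copy $S_u\cong F^\star$ with $u\in S_u$, so that $I(\theta_u;\theta_v\mid Y^{(\eps)}_{G_n})\le I(\theta_o;\theta_{\partial F^\star}\mid Y^{(\eps)}_{F^\star})<\delta$ for all $v\notin S_u$. Bounding the remaining $o(n^2)$ pairs trivially by $\log|\cX|$ gives $\limsup_n n^{-2}\sum_{u,v}I(\theta_u;\theta_v\mid Y^{(\eps)}_{G_n})\le\delta$ for a.e.\ $\eps$, and letting $\delta\downarrow0$ finishes the argument.

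\textbf{Main obstacle.} The first step is routine. The entire difficulty sits in the averaged boundary estimate $|F|^{-1}\sum_{w\in F}I(\theta_w;\theta_{\partial F}\mid Y^{(\eps)}_F)\le|\partial F|/|F|$: the naive bound $I(\theta_w;\theta_{\partial F}\mid Y^{(\eps)}_F)\le H(\theta_{\partial F})$ is off by a factor of order $|F|$, so one genuinely needs the information carried by the boundary to be amortized over all of $F$, and this is precisely where a price is paid and only ``almost every $\eps$'' can be claimed (exactly as in Proposition~\ref{L2_statement}, whose proof supplies this estimate). That amenability is indispensable here is confirmed by the random--regular--graph counterexample of Section~\ref{sec:RandRegular}.
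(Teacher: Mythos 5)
Your first step — the Pythagorean identity $\risk_n(\hatbX^{(\sdec)};f)-\risk_n^{\textup{Bayes}}(f)=n^{-2}\sum_{u,v}\E[\cov(f(\theta_u),f(\theta_v)\mid Y^{(\eps)}_{G_n})^2]$ followed by Pinsker to reduce everything to $n^{-2}\sum_{u,v}I(\theta_u;\theta_v\mid Y^{(\eps)}_{G_n})\to 0$ — is exactly the paper's reduction (the paper phrases it as a difference of Frobenius norms but lands on the same TV-then-Pinsker bound). Where you diverge is the second step, and there you have substantially over-engineered the argument while misdiagnosing where the difficulty lies. The paper kills the pairwise mutual information with a single global estimate (Lemma~\ref{Lem:decoupling}, i.e.\ Lemma~3.1 of \cite{MontanariSparse}): $\frac{1}{n}\int_0^\eps\sum_{u,v\in V_n}I(\theta_u;\theta_v\mid Y^{(\eps')}_{G_n})\,\rmd\eps'\le 2\log|\cX|$, which is proved by the same $\eps$-differentiation trick as Lemma~\ref{mutual_info} and uses \emph{no geometry whatsoever}: the total entropy budget $n\log|\cX|$ is amortized over all $n^2$ pairs, giving $O(1/n)$ per pair after integrating in $\eps$, hence convergence to zero for a.e.\ $\eps$. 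In particular your closing claim that ``amenability is indispensable here'' and is ``confirmed by the random--regular--graph counterexample'' is wrong: Proposition~\ref{decoupling_bayes} holds on random regular graphs too (it concerns the structure of the exact Bayes estimator, not its computability), and the counterexample of Section~\ref{sec:RandRegular} targets the \emph{locality} of the marginals (Theorem~\ref{main}), not the decoupling of pairs.

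Your amenability route can probably be made to work for the graphs at hand, but as written it has two soft spots beyond the misattribution. First, Lemma~\ref{mutual_info} only yields the boundary estimate in integrated form, $\sum_{w\in F}\int_0^\eps I(\theta_w;\theta_{\partial F}\mid Y^{(\eps')}_F)\,\rmd\eps'\le\log|\cX|\,|\partial F|$; a pointwise-in-$\eps$ bound for a single fixed $F$ does not follow, so you must run the argument along a F{\o}lner sequence $F_k$ and extract a full-measure set of $\eps$ from $L^1$ convergence of the averages (and intersect over $\delta=1/m$), which you gesture at but do not do. Second, the screening inequality $I(\theta_u;\theta_v\mid Y^{(\eps)}_{G_n})\le I(\theta_u;\theta_{\partial S}\mid Y^{(\eps)}_S)$ requires $u\in S\setminus\partial S$, and the below-average vertex you translate to $o$ must therefore be chosen in the interior of $F$ (a Markov-inequality count fixes this, but it needs saying); likewise the transfer of $I(\theta_u;\theta_{\partial S_u}\mid Y^{(\eps)}_{S_u})$ from $G_n$ to $G$ needs the local isomorphism to extend one layer past $S_u$ so that the boundaries correspond. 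None of these is fatal, but the net effect is a longer, less general proof of a statement that the paper dispatches with one graph-free lemma.
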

The proof of the above proposition can be found in Appendix~\ref{sec:omitted}.

Given Theorem \ref{main} and Proposition \ref{decoupling_bayes}, it is natural to consider the following low complexity version of 
$\hatbX^{(\sdec)}$: 
\begin{align}
\widehat{X}^{(l)}_{uv} := \Big(\sum_{x\in \cX} \widehat{\mu}_{G_n,u,l}(x)f(x)\Big) \cdot \Big(\sum_{x\in \cX} \widehat{\mu}_{G_n,v,l}(x)f(x)\Big)\, .
\end{align}
Since we can compute $\widehat{\mu}_{G_n,u,l}(x)$ for all but $o(n)$ vertices in time $\bigo(1)$, the overall complexity of $\hatbX^{(l)}$ is $\bigo(n^2)$. 
(Setting $\widehat{X}^{(l)}_{uv}=0$ for a sublinear fraction of vertices produces a negligible error.)
We can now prove Theorem \ref{thm:Risk}. 

\vspace{.25cm}
\begin{proofof}{Theorem~\ref{thm:Risk}}
Since  Proposition~\ref{decoupling_bayes} yields $\risk_n(\widehat{\bX}^{(\sdec)};f) - \risk_n^{\textup{Bayes}}(f) \rightarrow 0$ for almost all $\eps>0$, we only need to compare the risks of $\hatbX^{(l)}$ and $\widehat{\bX}^{(\sdec)}$. We have 
\[\risk_n(\hatbX^{(l)};f)  - \risk_n(\widehat{\bX}^{(\sdec)};f) = -\frac{2}{n^2} \E\big\langle \hatbX^{(l)} - \widehat{\bX}^{(\sdec)}, \bX_f\big\rangle  +\frac{1}{n^2} \big(\E \|\hatbX^{(l)}\|_F^2 - \E \|\widehat{\bX}^{(\sdec)}\|_F^2\big).\] 
 We have
 \begin{align*}
 \E\big\langle \hatbX^{(l)} - \widehat{\bX}^{(\sdec)}, \bX_f\big\rangle &= 
 \sum_{u,v \in V_n} \E \Big[ \Big(\E\big[f(\theta_u)|Y^{(\eps)}_{B_{G_n}(u,l)}\big] \E\big[f(\theta_v)|Y^{(\eps)}_{B_{G_n}(v,l)}\big]\\
 & \hspace{2cm}- \E\big[f(\theta_u)|Y^{(\eps)}_{G_n}\big] \E\big[f(\theta_v)|Y^{(\eps)}_{G_n}\big]\Big) f(\theta_u)f(\theta_v) \Big].
 \end{align*}
 By consecutive triangle inequalities, this is bounded in absolute value by
\begin{align*} 
\|f\|_{\infty}^2 \sum_{u,v \in V_n} &\E \Big[ \Big|\E\big[f(\theta_u)|Y^{(\eps)}_{B_{G_n}(u,l)}\big] \E\big[f(\theta_v)|Y^{(\eps)}_{B_{G_n}(v,l)}\big]
- \E\big[f(\theta_u)|Y^{(\eps)}_{G_n}\big] \E\big[f(\theta_v)|Y^{(\eps)}_{G_n}\big]\Big| \Big]\\
 &\le 2n\|f\|_{\infty}^3 \sum_{u \in V_n} \E \Big[ \Big|\E\big[f(\theta_u)|Y^{(\eps)}_{B_{G_n}(u,l)}\big] 
- \E\big[f(\theta_u)|Y^{(\eps)}_{G_n}\big]\Big|\Big]\\
&\le 2n\|f\|_{\infty}^4 \sum_{u \in V_n} \sum_{x \in \cX} \E \big[ \big|\widehat{\mu}_{G_n,u,l}(x) -\mu_{G_n,u}(x)\big|\big] \\
&=  4n\|f\|_{\infty}^4 \sum_{u \in V_n} \E \big[ d_{\sTV}(\widehat{\mu}_{G_n,u,l} ,\mu_{G_n,u})\big].
\end{align*}
Here, $\|f\|_{\infty}$ denotes the supremum norm of $f$. 

On the other hand, and following a similar strategy,
\begin{align*}
\E \big\|\hatbX^{(l)}\big\|_F^2 - \E \big\|\widehat{\bX}^{(\sdec)}\big\|_F^2
&\le 2n \|f\|_{\infty}^3  \sum_{u \in V_n} \E\Big[\Big|\E\big[f(\theta_u) |Y^{(\eps)}_{B_{G_n}(u,l)}\Big]- \E\big[f(\theta_u) | Y^{(\eps)}_{G_n}\big]\Big|\Big]\\
& \le 4n\|f\|_{\infty}^4 \sum_{u \in V_n} \E \big[d_{\sTV}(\widehat{\mu}_{G_n,u,l},\mu_{G_n,u})\big].
\end{align*}
Invoking Theorem~\ref{main} concludes the proof.  
\end{proofof}

Theorem \ref{main} and Proposition \ref{L2_statement} allow to control other metrics for the estimation errors beyond $\risk_n(\hatbX;f)$.
As an example, we consider  the `overlap' metric that applies to estimators $\hbtheta: \cY^{E_n}\times \cX_{\star}^{V_n}\to \cX^{V_n}$ 
which assign labels to vertices.
We define      
\begin{equation}\label{overlap}
\overlap(\hbtheta,\btheta) := \max_{\sigma \in \mathscr{S}_{q}} \frac{1}{|V_n|}\sum_{u \in V_n} \bfone\big\{\hat{\theta}_u=\sigma(\theta_{u})\big\},
\end{equation}
where $\mathscr{S}_{q}$ is the set of permutations on $\cX$, with $q =
|\cX|$. 

As a corollary of Proposition~\ref{L2_statement}, the overlap between a sample from the local marginals and $\btheta$ can be lower-bounded in a nontrivial way (the proof can be found in Appendix~\ref{sec:omitted}):  
\begin{corollary}\label{overlap_amenable}
For each let $l \ge 1$, let $\hat{\btheta}^{(l)} = (\hat{\theta}^{(l)}_u)_{u \in V_n}$ where $\hat{\theta}^{(l)}_u \sim \widehat{\mu}_{G_n,u,l}$ independently for all $u \in V_n$.  Then  for
 almost every $\eps>0$,
\[\liminf_{l \to \infty} \lim_{n \to \infty} \E\big[\overlap\big(\hbtheta^{(l)},\btheta\big)\big] \ge  \sum_{x \in \cX}\E\big[\mu^2_{G,o}(x)\big].\]
\end{corollary}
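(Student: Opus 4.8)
The plan is to lower-bound the overlap by throwing away the maximum over permutations in \eqref{overlap} (keeping only $\sigma = \mathrm{id}$), and then to recognize the resulting quantity as exactly the average of squared local marginals that is controlled by Proposition~\ref{L2_statement}. First I would note that for every realization of $(\btheta,\bY,{\bm \xi}^{(\eps)})$ and of the sampling of $\hbtheta^{(l)}$,
\[
\overlap\big(\hbtheta^{(l)},\btheta\big) \;\ge\; \frac{1}{n}\sum_{u\in V_n}\bfone\big\{\hat\theta^{(l)}_u = \theta_u\big\},
\]
so it suffices to lower bound $\frac1n\sum_{u\in V_n}\P\big(\hat\theta^{(l)}_u = \theta_u\big)$.

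Next I would compute this probability by conditioning on $Y^{(\eps)}_{G_n}$. Given $Y^{(\eps)}_{G_n}$, the label $\hat\theta^{(l)}_u$ is drawn from $\widehat\mu_{u,l,G_n}$ — a function of $Y^{(\eps)}_{B_{G_n}(u,l)}$, which is measurable with respect to $Y^{(\eps)}_{G_n}$ — together with fresh randomness independent of everything else, while $\theta_u$ has conditional law $\mu_{u,G_n}$. Hence $\hat\theta^{(l)}_u$ and $\theta_u$ are conditionally independent given $Y^{(\eps)}_{G_n}$, and
\[
\P\big(\hat\theta^{(l)}_u = \theta_u\big) = \E\Big[\sum_{x\in\cX}\widehat\mu_{u,l,G_n}(x)\,\mu_{u,G_n}(x)\Big] = \sum_{x\in\cX}\E\big[\widehat\mu_{u,l,G_n}(x)^2\big],
\]
where the last equality is precisely the identity $\E[\widehat\mu_{u,l,G_n}(x)\mu_{u,G_n}(x)] = \E[\widehat\mu_{u,l,G_n}(x)^2]$ established in the proof of Theorem~\ref{main} (by conditioning on $Y^{(\eps)}_{B_{G_n}(u,l)}$ and using the tower property).

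Averaging over $u\in V_n$ gives $\E\big[\overlap(\hbtheta^{(l)},\btheta)\big] \ge \frac1n\sum_{u\in V_n}\sum_{x\in\cX}\E\big[\widehat\mu_{u,l,G_n}(x)^2\big]$; letting $n\to\infty$ and then $l\to\infty$ and invoking \eqref{local} of Proposition~\ref{L2_statement} yields, for almost every $\eps>0$, the claimed bound $\liminf_{l\to\infty}\lim_{n\to\infty}\E[\overlap(\hbtheta^{(l)},\btheta)] \ge \sum_{x\in\cX}\E[\mu^2_{o,G}(x)]$. There is essentially no obstacle: the argument is a short reduction to the already-proven Proposition~\ref{L2_statement}, and the only point requiring care is the conditional-independence step above (which is what makes the sampled estimator's accuracy equal to $\sum_x\E[\widehat\mu^2]$ rather than something smaller), together with the minor bookkeeping that the left-hand side should be read, if necessary, as $\liminf_l\liminf_n$ — which is exactly the mode of convergence supplied by \eqref{local}.
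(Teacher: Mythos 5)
Your proof is correct and follows essentially the same route as the paper's: drop the maximum over permutations, use conditional independence of $\hat\theta^{(l)}_u$ and $\theta_u$ given the observations to write $\P(\hat\theta^{(l)}_u=\theta_u)=\sum_x\E[\widehat\mu_{u,l,G_n}(x)\mu_{u,G_n}(x)]=\sum_x\E[\widehat\mu_{u,l,G_n}(x)^2]$, and invoke \eqref{local} of Proposition~\ref{L2_statement}. No issues.
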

As the radius $l$ of the local balls increases, the performance of $\hbtheta^{(l)}$ approaches that of a sample drawn from the full marginals $(\mu_{G_n,u})_{u \in V_n}$. 

\section{Results for random regular graphs}
\label{sec:RandRegular}

The assumption of anchored--amenability is crucial in the proofs of Theorems \ref{thm:Risk} and
\ref{main}. While we do not know whether a weaker condition is sufficient, we show that these results do not hold for at least one non-amenable case, namely, when 
$G_n$ is a random $k$-regular graph with constant degree $k$. For the case of $\integers_q$--synchronization we show that in a certain regime of signal-to-noise ratio (SNR),
 the local estimates of vertex marginals provide no information about the hidden assignment $\btheta$, while in the same regime, it is information-theoretically possible to estimate $\btheta$ non-trivially. 

As mentioned in the introduction, an information-computation gap has been observed in several statistical
models. However, none of the rigorous results in the literature matches the setting of Theorems \ref{thm:Risk} and
\ref{main}. To the best of our knowledge, the closest example  is the case of the stochastic block model with $q$ communities on sparse random graphs (see \cite{abbe2017community} for a comprehensive survey and references therein). 
As explained in Section \ref{sec:Examples}, this example fits our framework, although with $G_n$ being the complete graph. In particular, $G_n$ does not
converge to a locally finite graphs. In contrast, the example treated in this section satisfies all the assumptions of Theorems \ref{thm:Risk} and \ref{main} except amenability (and tameness).
Proofs for this section are deferred to Appendices~\ref{sec:IT-Zq} and~\ref{sec:Loc-Zq}.

\subsection{Information-theoretic reconstruction: An exhaustive search algorithm}     

Given a graph $G = (V,E)$ on $n$ vertices, $\btheta\in \cX^{V}$ and $\bY\in \cY^{E}$, we define the edge empirical distribution
\begin{align}
\hnu^G_{\btheta,\bY} := \frac{1}{|E|}\sum_{(u,v)\in E}\delta_{(\theta_{u},\theta_{v},Y_{uv})}\, .
\end{align}
This is a probability distribution on $\cX\times\cX\times \cY$: $\hnu^{G}_{\btheta,\bY}\in\cuP(\cX\times\cX\times \cY)$. 
(Recall that $\cuP(S)$ denotes the simplex of probability distributions over the set $S$.) 
Define $\onu\in\cuP(\cX)$ to be the uniform distribution on $\cX$ and $\onu_{\se}\in \cuP(\cX\times\cX\times \cY)$ via
\begin{align*}
\onu_{\se} (\theta_1,\theta_2,y_{12}) = \onu(\theta_1)\, \onu(\theta_2)\, Q(y_{12}|\theta_1,\theta_2)\, .
\end{align*}
We then define the set of `typical' assignments of node variables by
\begin{align*}
\Theta(\eta;G,\bY) := \Big\{\btheta\in \cX^V:\;\; d_{\sTV}(\hnu^G_{\btheta,\bY},\onu_{\se})\le \eta\Big\}\, .
\end{align*}
We then consider the reconstruction algorithm that outputs a typical configuration 
\begin{align}
\hbtheta(G,\bY)\in \Theta(\eta_n;G,\bY)\, ,\qquad \eta_n:= \frac{(\log n)}{\sqrt{n}}\, . \label{eq:IT-Estimator}
\end{align}
If $\Theta(\eta_n;G,\bY)$ is empty, we define $\hbtheta(G,\bY)$ arbitrarily (for instance $\hbtheta(G,\bY) = \btheta_*$ for a fixed reference configuration $\btheta_*\in\cX^V$).
If $\Theta(\eta_n;G,\bY)$ contains more than one element, then $\hbtheta(G,\bY)$ selects one arbitrarily,
e.g., the first one in lexicographic order. In fact our proofs apply to any algorithm that satisfy condition 
\eqref{eq:IT-Estimator} with high probability. As discussed below (see Remark \ref{rmk:TwoEstimators}) this condition is also satisfied by the randomized estimator
$\hbtheta\sim \P\big(\cdot | Y^{(\eps)}_{G_n}\big)$ that samples from the posterior.

It is immediate to show that the typical set is non-empty with high probability. (Throughout this section, we use $\btheta_0$ for the ground truth, in order to
distinguish it from a generic vector $\btheta\in\cX^n$.)
\begin{lemma}\label{lemma:TypicalNonEmpty}
Let $G_n$ be a random $k$-regular graph on $n$ vertices, and let $(\btheta_0,\bY)$ be distributed according to the random observation model described in 
the Introduction. Then, there exists $c_0 = c_0(|\cX|,|\cY|)>0$ such that 
\begin{align*}
\prob\Big(\btheta_0 \in \Theta(\eta_n;G_n,\bY) \Big)\ge 1- c_0^{-1}\exp\big\{-c_0(\log n)^2\big\}\, .
\end{align*}
\end{lemma}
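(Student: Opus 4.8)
The plan is to show that the edge empirical distribution $\hnu^{G_n}_{\btheta_0,\bY}$ concentrates around $\onu_{\se}$ coordinate by coordinate, with Gaussian-type fluctuations of width $\Theta(1/\sqrt n)$, and then to union bound over the $|\cX|^2|\cY|$ coordinates. First I would record the first-moment identity: since $G_n$ is simple (self-loops and multi-edges, if present in a configuration-model realization, are $o(\sqrt n)$ in number and hence negligible against the fluctuations we tolerate), for any fixed triple $(\theta_1,\theta_2,y)$ and any edge $(u,v)\in E_n$ we have $\prob(\theta_{0,u}=\theta_1,\theta_{0,v}=\theta_2,Y_{uv}=y)=|\cX|^{-2}Q(y\mid\theta_1,\theta_2)=\onu_{\se}(\theta_1,\theta_2,y)$, using that $\btheta_0$ is i.i.d.\ $\Unif(\cX)$ and $Y_{uv}\mid\btheta_0\sim Q(\cdot\mid\theta_{0,u},\theta_{0,v})$. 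Summing over the $|E_n|=nk/2$ edges gives $\E[\hnu^{G_n}_{\btheta_0,\bY}(\theta_1,\theta_2,y)]=\onu_{\se}(\theta_1,\theta_2,y)$ exactly.

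For the concentration step I would introduce the i.i.d.\ representation $Y_{uv}=g(\theta_{0,u},\theta_{0,v},U_{uv})$ with $(U_{uv})_{(u,v)\in E_n}$ i.i.d.\ and independent of $\btheta_0$, so that for each $(\theta_1,\theta_2,y)$ the count $Z:=|E_n|\,\hnu^{G_n}_{\btheta_0,\bY}(\theta_1,\theta_2,y)$ is a deterministic function of the independent family $(\theta_{0,i})_{i\in[n]}\cup(U_{uv})_{(u,v)\in E_n}$. Modifying one $U_{uv}$ changes at most one summand of $Z$, hence changes $Z$ by at most $1$; modifying one $\theta_{0,i}$ affects only the at most $k$ edges incident to $i$ — both through the label indicators and through the values of $Y_{uv}$ on those edges — so changes $Z$ by at most $k$. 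McDiarmid's bounded-differences inequality (applied conditionally on $G_n$, whose structure enters only through $k$-regularity) then gives
\[
\prob\!\Big(\big|Z-|E_n|\,\onu_{\se}(\theta_1,\theta_2,y)\big|\ge t\Big)\;\le\;2\exp\!\Big(-\frac{2t^2}{nk^2+|E_n|}\Big)\;\le\;2\exp\!\Big(-\frac{t^2}{nk^2}\Big).
\]
(Equivalently one can first condition on $\btheta_0$ and apply Hoeffding over the independent $Y_{uv}$, then apply bounded differences to $\E[Z\mid\btheta_0]$ as a function of $\btheta_0$; both routes yield the same bound.)

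To finish I would take $t=t_n:=k\sqrt n\,(\log n)/(|\cX|^2|\cY|)$, so the right-hand side is $2\exp(-(\log n)^2/(|\cX|^4|\cY|^2))$, \emph{uniformly in $k$}. A union bound over the $|\cX|^2|\cY|$ triples shows that with probability at least $1-2|\cX|^2|\cY|\exp(-(\log n)^2/(|\cX|^4|\cY|^2))$ every coordinate of $\hnu^{G_n}_{\btheta_0,\bY}$ lies within $t_n/|E_n|=2\eta_n/(|\cX|^2|\cY|)$ of $\onu_{\se}$, where $\eta_n=(\log n)/\sqrt n$; on that event $d_{\sTV}(\hnu^{G_n}_{\btheta_0,\bY},\onu_{\se})=\tfrac12\sum_{\theta_1,\theta_2,y}|\cdot|\le\tfrac12\,|\cX|^2|\cY|\cdot 2\eta_n/(|\cX|^2|\cY|)=\eta_n$, i.e.\ $\btheta_0\in\Theta(\eta_n;G_n,\bY)$. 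Choosing $c_0=c_0(|\cX|,|\cY|)$ small enough that $c_0\le (|\cX|^4|\cY|^2)^{-1}$ and $c_0^{-1}\ge 2|\cX|^2|\cY|$ gives the claimed bound.

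I do not expect a genuine obstacle here — this is the ``immediate'' part of the argument — but the one point needing care is that $(\btheta_0,\bY)$ is \emph{not} a family of independent coordinates, so one must either pass to the representation $Y_{uv}=g(\theta_{0,u},\theta_{0,v},U_{uv})$ before invoking a bounded-differences inequality or treat the two sources of randomness in two stages; and one should keep track of constants to confirm that the tail exponent can be taken independent of the degree $k$.
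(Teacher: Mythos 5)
Your proposal is correct and follows essentially the same route as the paper: the same reduction of $d_{\sTV}$ to a union bound over the $|\cX|^2|\cY|$ coordinates, the same auxiliary representation $Y_{uv}=g(\theta_{0,u},\theta_{0,v},U_{uv})$ to obtain an independent family, and the same application of McDiarmid with bounded differences $k$ (per vertex) and $1$ (per edge), yielding the $\exp\{-c_0(\log n)^2\}$ tail at $\eta_n=(\log n)/\sqrt{n}$. The subtlety you flag about the lack of independence of $(\btheta_0,\bY)$ is exactly the point the paper's proof handles the same way.
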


\begin{remark}\label{rmk:TwoEstimators}
As mentioned above, one might consider a randomized estimator $\hbtheta$  that outputs a sample from the posterior: $\hat{\btheta} \sim \P\big(\,\cdot\, | Y^{(\eps)}_{G_n}\big)$. 
Note that this satisfies the condition $\hat{\btheta} \in \Theta(\eta_n,G_n,\bY)$ (cf. Eq. \eqref{eq:IT-Estimator}) 
with the same probability $1- c_0^{-1}\exp\{-c_0(\log n)^2\}$. Indeed this follows simply by noting that, with this definition, the pair $(\hbtheta,\bY)$
is distributed as $(\btheta_0,\bY)$.
Therefore all the results to follow apply to this randomized estimator as well. 
\end{remark}
Given two assignments $\btheta_0,\btheta\in \cX^V$, we define their joint empirical vertex distribution as
\begin{align}\label{overlap_dist}
\homega_{\btheta_0,\btheta} := \frac{1}{|V|}\sum_{u\in V}\delta_{\theta_{0,u},\theta_u}\, .
\end{align}
This is a probability distribution on $\cX\times\cX$:  $\homega_{\btheta_0,\btheta}\in \cuP(\cX\times\cX)$.

To state the next result let us briefly recall some notions form information theory. Given a discrete random variable (or random vector) $X$, we denote by $H(X)$ the Shannon entropy of the law of $X$, namely --with a slight abuse of notation--  $H(X) = H(P_X)=-\sum_xP_X(x)\log P_X(x)$. For a vector $(X_1,\dots,X_m)$, $H(X_1,\dots,X_m) = H(P_{X_1,\dots,X_m})$. 
The conditional entropy is defined by $H(X|Y) = H(X,Y)-H(Y)$, and the mutual information by $I(X;Y) = H(X)-H(X|Y) = H(Y)-H(Y|X)$. 
\begin{customthm}{C}\label{exhaustive_general}
Assume there exists $c_M>0$ such that $c_M^{-1}\le Q(y|x_1,x_2)\le c_M$ for all $x_1,x_2\in\cX$, $y\in\cY$,
and let $(\theta_1,\theta_2,Y)$ have joint distribution $\onu_e$ (recall that $\onu_e(x_1,x_2,y) = \onu(x_1)\onu(x_2) \times Q(y|x_1,x_2)$ where $\onu$ is the uniform distribution over $\cX$). If 
\begin{align*}
\frac{k}{2}I(\theta_1,\theta_2;Y) \ge H(\theta_1)+\eps\, ,
\end{align*}
for some $\eps>0$, then  there exists $\delta = \delta(\eps,c_M)>0$ and a constant $c_0>0$ such that
\begin{align*}
\prob\Big(d _{\sTV} (\homega_{\hbtheta,\btheta_0},\onu\times\onu)\ge \delta\Big) \ge 1- c_0^{-1}\exp\{-c_0(\log n)^2\}\, .
\end{align*}
\end{customthm}
The proof of this theorem relies on a truncated first moment method, where we count the expected number of typical assignments $\btheta \in \Theta(\eta_n;G_n,\bY)$ having a given value of the empirical overlap distribution $\homega_{\btheta,\btheta_0}$, conditioned on certain typicality constraints on the instance $(G_n,\btheta_0,\bY)$. The full argument is deferred to Appendix \ref{sec:ProofExhaustiveGeneral}. (We refer, e.g., to \cite{dembo2013factor} for similar calculations in a somewhat simpler context.)

The next corollary applies the result of Theorem~\ref{exhaustive_general} to $\Z_q$--synchronization.
\begin{corollary}\label{z_q_corollary}
Consider the $\Z_q$--synchronization problem. If 
\begin{align*}
k>k_*(p;q)   :=\frac{2\log q}{\Big(1-p+\frac{p}{q}\Big)\log\big(p+q(1-p)\big) +\Big(1-\frac{1}{q}\Big)p\log p} \,  ,
\end{align*}
then there exists $\delta, c_0>0$ depending on $k,p,q$ such that, with probability at least $1-c_0^{-1}\exp\{-c_0 (\log n)^2\}$,
$d_{\sTV}(\homega_{\hbtheta,\btheta_0},\onu\times\onu)\ge \delta$. 

Furthermore, as $p\to 1$, we have
\begin{align*}
k_*(p;q) = \frac{4\log q}{(q-1)(1-p)^2} +\bigo\left((1-p)^{-1}\right)\, .
\end{align*}
\end{corollary}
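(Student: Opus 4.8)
The plan is to derive Corollary~\ref{z_q_corollary} as a direct specialization of Theorem~\ref{exhaustive_general} to the $\Z_q$--synchronization kernel, followed by an asymptotic expansion of the resulting threshold as $p \to 1$. First I would verify the boundedness hypothesis: for the model \eqref{Z_q_model}, one has $Q(y \mid x_1,x_2) = (1-p) + p/q$ when $y = x_1 - x_2 \pmod q$ and $Q(y \mid x_1,x_2) = p/q$ otherwise, so every transition probability lies between $p/q$ and $1 - p + p/q$, and we may take $c_M = \max\{q/p,\, (1-p+p/q)^{-1}\}$, which is finite for $p \in (0,1)$. Hence Theorem~\ref{exhaustive_general} applies and it remains only to compute the quantity $I(\theta_1,\theta_2;Y)$ under $\onu_e$ and to solve the condition $\tfrac{k}{2} I(\theta_1,\theta_2;Y) \ge H(\theta_1) = \log q$ for $k$.

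Next I would compute the mutual information. Under $\onu_e$, the pair $(\theta_1,\theta_2)$ is uniform on $\Z_q \times \Z_q$, so $H(\theta_1,\theta_2) = 2\log q$, and $Y$ is a deterministic-plus-noise function of the difference $\theta_1 - \theta_2$; by symmetry $Y$ is uniform on $\Z_q$, so $H(Y) = \log q$. The cleanest route is $I(\theta_1,\theta_2;Y) = H(Y) - H(Y \mid \theta_1,\theta_2)$. Conditioned on $(\theta_1,\theta_2)$, the variable $Y$ takes the value $\theta_1-\theta_2$ with probability $1-p+p/q$ and each of the other $q-1$ values with probability $p/q$; thus
\[
H(Y\mid\theta_1,\theta_2) = -\Big(1-p+\tfrac{p}{q}\Big)\log\Big(1-p+\tfrac{p}{q}\Big) - (q-1)\tfrac{p}{q}\log\tfrac{p}{q}\, .
\]
Subtracting from $H(Y) = \log q$ and simplifying the logarithms (writing $1 - p + p/q = (p + q(1-p))/q$ and $p/q = p/q$, so each $\log$ picks up a $-\log q$ that recombines with $H(Y) = \log q$) yields
\[
I(\theta_1,\theta_2;Y) = \Big(1-p+\tfrac{p}{q}\Big)\log\big(p+q(1-p)\big) + \Big(1-\tfrac1q\Big) p\log p\, .
\]
Then the hypothesis $\tfrac{k}{2}I(\theta_1,\theta_2;Y) \ge \log q + \eps$ holds for some $\eps > 0$ precisely when $k > k_*(p;q) = 2\log q / I(\theta_1,\theta_2;Y)$, which is the stated formula; invoking Theorem~\ref{exhaustive_general} with this $\eps$ then gives the conclusion $d_{\sTV}(\homega_{\hbtheta,\btheta_0},\onu\times\onu) \ge \delta$ with the claimed probability. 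One small point of care: I must check $I(\theta_1,\theta_2;Y) > 0$ for $p \in (0,1)$ so that $k_*$ is a well-defined finite positive number — this follows because $Y$ is genuinely informative about $(\theta_1,\theta_2)$ unless $p = 1$ (data processing / strict concavity of entropy), and can also be read off the $p\to1$ expansion below.

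Finally, for the asymptotics as $p \to 1$, I would set $\delta_p := 1 - p \to 0^+$ and Taylor-expand the denominator $I(\theta_1,\theta_2;Y)$ in $\delta_p$. With $p + q(1-p) = 1 + (q-1)\delta_p$ and $\log(1 + (q-1)\delta_p) = (q-1)\delta_p - \tfrac12 (q-1)^2\delta_p^2 + O(\delta_p^3)$, and $1 - p + p/q = 1/q + (1 - 1/q)\delta_p$, and $\log p = -\delta_p - \tfrac12\delta_p^2 + O(\delta_p^3)$, the first-order terms in $\delta_p$ cancel (the $O(\delta_p)$ coefficient is $\tfrac1q(q-1) - (1-\tfrac1q) = 0$), leaving $I(\theta_1,\theta_2;Y) = \tfrac12 (1 - \tfrac1q)(q-1)\delta_p^2 + O(\delta_p^3) = \tfrac{(q-1)^2}{2q}\delta_p^2 + O(\delta_p^3)$. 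Hence
\[
k_*(p;q) = \frac{2\log q}{\frac{(q-1)^2}{2q}(1-p)^2 + O((1-p)^3)} = \frac{4q\log q}{(q-1)^2(1-p)^2} + O\big((1-p)^{-1}\big)\, ,
\]
which matches the stated leading term up to the factor $q/(q-1)$ versus $1/(q-1)$ — I would recheck the coefficient bookkeeping here, since the paper writes $4\log q/((q-1)(1-p)^2)$; the discrepancy is exactly the $\tfrac1q$ inside $1 - p + p/q$, so the main obstacle is simply getting every constant in this second-order expansion exactly right and reconciling it with the displayed formula (e.g. whether the intended normalization of $\log$ or of the noise is slightly different). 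Everything else is a mechanical substitution into Theorem~\ref{exhaustive_general}.
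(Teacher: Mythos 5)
Your overall route is exactly the one the paper intends (the paper itself says the corollary "follows from Theorem~\ref{exhaustive_general} simply by computing $I(\theta_1,\theta_2;Y)$" and omits the details), and the first part of your argument is correct and complete: the kernel is bounded away from $0$ and $\infty$ for $p\in(0,1)$, the computation $I(\theta_1,\theta_2;Y)=\big(1-p+\tfrac{p}{q}\big)\log\big(p+q(1-p)\big)+\big(1-\tfrac1q\big)p\log p$ is right, and the condition $\tfrac{k}{2}I\ge \log q+\eps$ for some $\eps>0$ is equivalent to $k>2\log q/I=k_*(p;q)$.

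The genuine gap is in the $p\to1$ expansion, and it is your arithmetic, not the paper's normalization, that is off. Writing $\delta=1-p$, you correctly collected the $\delta^2$ contribution of the first summand, namely $\tfrac1q\cdot\big(-\tfrac12(q-1)^2\delta^2\big)+\tfrac{q-1}{q}\delta\cdot(q-1)\delta=\tfrac{(q-1)^2}{2q}\delta^2$, but you dropped the $\delta^2$ contribution of the second summand: from $\big(1-\tfrac1q\big)p\log p=\tfrac{q-1}{q}(1-\delta)\big(-\delta-\tfrac12\delta^2+O(\delta^3)\big)$ one gets $-\tfrac{q-1}{q}\delta+\tfrac{q-1}{2q}\delta^2+O(\delta^3)$. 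Adding the two second-order terms gives
\begin{align*}
I(\theta_1,\theta_2;Y)=\Big(\frac{(q-1)^2}{2q}+\frac{q-1}{2q}\Big)\delta^2+O(\delta^3)=\frac{q-1}{2}(1-p)^2+O\big((1-p)^3\big)\, ,
\end{align*}
whence $k_*(p;q)=\tfrac{2\log q}{(q-1)(1-p)^2/2}\big(1+O(1-p)\big)=\tfrac{4\log q}{(q-1)(1-p)^2}+O\big((1-p)^{-1}\big)$, exactly as stated. So the spurious factor $q/(q-1)$ you flagged disappears once the missing term is restored; there is no issue with the displayed formula or the normalization of the logarithm.
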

This corollary follows from Theorem~\ref{exhaustive_general} simply by computing $I(\theta_1,\theta_2;Y)$ in the case of $\Z_q$--synchronization. We omit the details. 
Finally, we deduce from Theorem~\ref{exhaustive_general} the possibility of weak recovery.
\begin{corollary} \label{overlap_corollary}
Under the assumptions of Theorem~\ref{exhaustive_general}, if $\frac{k}{2}I(\theta_1,\theta_2;Y) \ge H(\theta_1)+\eps$, then there exists a constant $\delta  = \delta(\eps)>0$ such that 
\begin{align}
\liminf_{n \to \infty} \E[\overlap(\hbtheta,\btheta_0)] \ge  \frac{1}{q} + \delta. \label{eq:Rec-1}
\end{align}
Moreover, there exists a function $f : \cX \mapsto \R$ with zero mean, unit variance, and a constant $\delta  = \delta(\eps,|\cX|,c_M)>0$ such that 
\begin{align}
\limsup_{n \to \infty} \risk_n^{\textup{Bayes}}(f) \le 1 - \delta. \label{eq:Rec-2}
\end{align}
In particular, the conclusions  \eqref{eq:Rec-1} and \eqref{eq:Rec-2} hold in the $\Z_q$--synchronization model if $k>k_*(p;q)$.
\end{corollary}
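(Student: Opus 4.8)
The plan is to deduce both statements from a single consequence of Theorem~\ref{exhaustive_general}: under its hypothesis, with probability $\ge 1-c_0^{-1}\exp\{-c_0(\log n)^2\}$ the (data--measurable) estimator $\hbtheta=\hbtheta(G_n,\bY)$ of \eqref{eq:IT-Estimator} --- equivalently, by Remark~\ref{rmk:TwoEstimators}, a posterior sample $\hbtheta\sim\P(\,\cdot\,|Y^{(\eps)}_{G_n})$ --- satisfies simultaneously (i) $d_{\sTV}(\homega_{\hbtheta,\btheta_0},\onu\times\onu)\ge\delta$, and (ii) its empirical overlap distribution $\homega_{\hbtheta,\btheta_0}$ has both marginals within $\eta_n=(\log n)/\sqrt n$ of uniform on $\cX$. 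For (ii): the second marginal is the empirical law of $\btheta_0$, which concentrates around uniform with the required probability; the first marginal equals $\tfrac12(\mathrm{marg}_1+\mathrm{marg}_2)$ of $\hnu^{G_n}_{\hbtheta,\bY}$ because $G_n$ is $k$--regular with $|E_n|=kn/2$, so $\hbtheta\in\Theta(\eta_n;G_n,\bY)$ together with $\mathrm{marg}_i\,\onu_{\se}=\onu$ and contractivity of $d_{\sTV}$ under marginalization gives $d_{\sTV}(\hat\rho_{\hbtheta},\onu)\le\eta_n$.

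For \eqref{eq:Rec-1}, on the good event write $G(\sigma):=\sum_{a\in\cX}\homega_{\hbtheta,\btheta_0}(a,\sigma(a))$, so $\overlap(\hbtheta,\btheta_0)=\max_{\sigma\in\mathscr S_q}G(\sigma)$. For $\sigma$ drawn uniformly from $\mathscr S_q$ one has $\E_\sigma[G(\sigma)]=1/q$ exactly (each entry of $\homega_{\hbtheta,\btheta_0}$ is hit by $(q-1)!$ permutations), so by Cauchy--Schwarz $\overlap(\hbtheta,\btheta_0)\ge \E_\sigma[G(\sigma)^2]/\E_\sigma[G(\sigma)]=q\,\E_\sigma[G(\sigma)^2]$. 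A short computation expresses $\E_\sigma[G(\sigma)^2]$ through $\|\homega_{\hbtheta,\btheta_0}\|_F^2$ and the two marginals; by (ii) the marginal--dependent terms differ from their uniform values by $O(\eta_n^2)$, while by (i) $\|\homega_{\hbtheta,\btheta_0}\|_F^2=1/q^2+\|\homega_{\hbtheta,\btheta_0}-\onu\times\onu\|_F^2\ge 1/q^2+\tfrac{4}{q^2}d_{\sTV}(\homega_{\hbtheta,\btheta_0},\onu\times\onu)^2\ge 1/q^2+4\delta^2/q^2$. This yields $\overlap(\hbtheta,\btheta_0)\ge 1/q+\tfrac{4}{q(q-1)}\delta^2-O(\eta_n^2)$ on the good event; since $\overlap\ge 1/q$ always and the good event has probability $\to1$, taking expectations and $\liminf$ gives \eqref{eq:Rec-1} with $\delta\leftarrow\tfrac{2}{q(q-1)}\delta^2$. (Equivalently, via Birkhoff--von Neumann: $q\,\homega_{\hbtheta,\btheta_0}$ is approximately doubly stochastic, and $\max_\sigma\langle M,P_\sigma\rangle\ge\|M\|_F^2=1+\|M-J/q\|_F^2$ for doubly stochastic $M$.)

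For \eqref{eq:Rec-2} the obstacle is that (i) controls $\homega_{\hbtheta,\btheta_0}$ only up to a permutation of $\cX$, whereas estimating $\bX_f$ for a generic $f$ requires recovering $\btheta_0$ itself --- this is where the side information enters. First produce a $Y^{(\eps)}_{G_n}$--measurable estimator $\hbtheta'$ agreeing with $\btheta_0$ \emph{diagonally}: run $\hbtheta=\hbtheta(G_n,\bY)$, let $\sigma^\star$ realize the maximum in $\overlap(\hbtheta,\btheta_0)$ (agreement $\ge 1/q+\delta'$ with $\delta'=\tfrac{2}{q(q-1)}\delta^2$, by \eqref{eq:Rec-1}), and estimate $\sigma^\star$ by the permutation $\hat\sigma$ maximizing agreement of $\hbtheta$ with the $\asymp\eps n$ revealed labels $\{\xi^{(\eps)}_u=\theta_{0,u}\}$; since the revealed set is a uniform $\eps$--fraction independent of $(\hbtheta,\btheta_0)$, a union bound over $\mathscr S_q$ shows $\hat\sigma$ attains agreement $\ge 1/q+\delta'/2$ with probability $1-e^{-\Omega(\eps n)}$, so $\hat\theta'_u:=\hat\sigma^{-1}(\hat\theta_u)$ has $\tfrac1n\sum_u\bfone\{\hat\theta'_u=\theta_{0,u}\}\ge 1/q+\delta'/2$. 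Fix an orthonormal basis $\{1,f_1,\dots,f_{q-1}\}$ of $L^2(\mathrm{Unif}(\cX))$ with each $f_j$ zero--mean and unit--variance; by the reproducing--kernel identity $\sum_{j}f_j(x)f_j(y)=q\bfone\{x=y\}-1$, $\sum_{j}\big(\tfrac1n\sum_u f_j(\hat\theta'_u)f_j(\theta_{0,u})\big)=q\big(\tfrac1n\sum_u\bfone\{\hat\theta'_u=\theta_{0,u}\}\big)-1\ge q\delta'/2$, so by pigeonhole there is $j_0$ with $\E\big[\big(\tfrac1n\sum_u f_{j_0}(\hat\theta'_u)f_{j_0}(\theta_{0,u})\big)^2\big]\ge c>0$ for all large $n$ (in $\Z_q$--synchronization the Fourier modes carry equal signal by symmetry, so any single mode works for every $n$; in general one fixes $j_0$ along a subsequence).

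Set $f:=f_{j_0}$. Since $\hatbX^{\mathrm{Bayes}}$ minimizes $\risk_n(\,\cdot\,;f)$ over all $Y^{(\eps)}_{G_n}$--measurable estimators and $\hbtheta'$ is $Y^{(\eps)}_{G_n}$--measurable, comparing against $\big(t\,f(\hat\theta'_u)f(\hat\theta'_v)\big)_{u,v}$ and optimizing over $t$, using $\tfrac1{n^2}\E\|\bX_f\|_F^2=1+O(1/n)$, gives
\[
\risk_n^{\mathrm{Bayes}}(f)\le 1+O(1/n)-\frac{A_n^2}{B_n},\qquad A_n:=\E\Big[\Big(\tfrac1n\sum\nolimits_u f(\hat\theta'_u)f(\theta_{0,u})\Big)^2\Big],\quad B_n:=\E\Big[\Big(\tfrac1n\sum\nolimits_u f(\hat\theta'_u)^2\Big)^2\Big].
\]
Here $B_n\le\|f\|_\infty^4$ and $A_n\ge c$ for large $n$, so $\limsup_n\risk_n^{\mathrm{Bayes}}(f)\le 1-c^2/\|f\|_\infty^4=:1-\delta$. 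Finally, for $\Z_q$--synchronization, Corollary~\ref{z_q_corollary}'s computation of $I(\theta_1,\theta_2;Y)$ shows $k>k_*(p;q)$ is precisely the hypothesis $\tfrac k2 I(\theta_1,\theta_2;Y)>H(\theta_1)$ of Theorem~\ref{exhaustive_general}, giving the stated conclusions. The main difficulty is the symmetry--breaking in \eqref{eq:Rec-2}: one must extract a \emph{single, fixed} function $f$ despite Theorem~\ref{exhaustive_general} controlling $\hbtheta$ only up to a random relabeling, and show the vanishing side information suffices to pin that relabeling with failure probability $1-e^{-\Omega((\log n)^2)}$ so as to remain compatible with Theorem~\ref{exhaustive_general} and Lemma~\ref{lemma:TypicalNonEmpty}; everything else is bookkeeping.
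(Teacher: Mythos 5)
Your proposal is correct, and for \eqref{eq:Rec-1} it is essentially the paper's argument: the paper uses Birkhoff's theorem to get $\overlap(\hbtheta,\btheta_0)\ge q\sum_{x,x'}\homega_{\hbtheta,\btheta_0}(x,x')^2$ and then lower-bounds the Frobenius norm via $d_{\sTV}(\homega_{\hbtheta,\btheta_0},\onu\times\onu)\ge\delta$; your averaged-permutation/Cauchy--Schwarz version is the same bound (and is in fact a bit more careful, since $q\homega_{\hbtheta,\btheta_0}$ is only \emph{approximately} doubly stochastic, a point the paper glosses over and which you patch using the $\eta_n$-closeness of the marginals to uniform).

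For \eqref{eq:Rec-2} you take a genuinely different, and substantially heavier, route. The paper never breaks the permutation symmetry: it works with the bilinear functional $\homega_{\hbtheta,\btheta_0}(f,g)$ for $f,g$ ranging over the finite family of centered indicators, uses a union bound to extract a \emph{deterministic} pair $(f,g)$ with $\E[\homega_{\hbtheta,\btheta_0}(f,g)^2]\ge c(q)\delta^2$, and then plugs the estimator $\widehat X^{(\lambda)}_{uv}=\lambda\, g(\hat\theta_u)g(\hat\theta_v)$ into the risk. The key observation making this work is that the cross term $\tfrac{1}{n^2}\langle\hatbX^{(\lambda)},\bX_f\rangle=\lambda\,\homega_{\hbtheta,\btheta_0}(f,g)^2$ is a perfect square, so the sign/relabeling ambiguity that worries you simply never enters, and one may use a different $g$ in the estimator than the $f$ in the target. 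Your route instead pins down the relabeling explicitly by estimating the optimal permutation from the $\eps n$ revealed labels and then isolates a Fourier mode via the reproducing-kernel identity. This is correct (the erasure pattern is indeed independent of $(\hbtheta,\btheta_0)$ since $\hbtheta$ uses only $(G_n,\bY)$, so the subsampled agreements concentrate uniformly over $\mathscr{S}_q$), but it buys nothing here and costs something: it makes the conclusion depend on $\eps>0$, whereas the paper's proof of \eqref{eq:Rec-2} works with no side information at all. Both your proof and the paper's share the same small wrinkle that the "good" function is a priori $n$-dependent, which must be resolved by pigeonholing over the finite family (or a subsequence); you at least flag this explicitly. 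The final step --- optimizing the scalar $t$ (resp.\ $\lambda$) and bounding $\tfrac{1}{n^2}\E\|\bX_f\|_F^2\to1$, $B_n\le\|f\|_\infty^4$ --- coincides with the paper's computation.
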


\subsection{Performance of the local algorithm}
 In this section we examine the asymptotics of the local marginals 
 \[\widehat{\mu}_{G_n,u,l}(x) = \P\Big(\theta_u = x | Y^{(\eps)}_{B_{G_n}(u,l)}\Big),\]
 when $G_n$ is a random $k$-regular graph, in the special case of $\Z_q$--synchronization with side information from \BEC($\epsb$). 

 We have seen in the previous section that weak recovery is possible (albeit non-efficiently) when $(1-p)^2 k > \frac{4 \log q}{q-1} + \bigo(1-p)$ even in the absence of side information (Corollary~\ref{overlap_corollary}). We show on the other hand that the local marginals are approximately uniform if $(1-p)^2 (k-1) < 1$. The latter condition is known as the \emph{Kesten-Stigum threshold} for the problem of robust reconstruction on the tree~\cite{janson2004robust}.     
 
 \begin{customthm}{D}\label{local_alg_Z_q}
 Consider $\Z_q$--synchronization with side information from \BEC($\epsb$) on a random $k$-regular graph $G_n$. There exist constants $c = c(k,p,q)$ and $C = C(k,p,q)$ such that the following holds. If $(1-p)^2 (k-1) < 1$ and $\eps \le c$ then   
 \begin{equation}\label{local_on_tree}
 \limsup_{l \to \infty}\limsup_{n \to \infty} \frac{1}{n}\sum_{u\in V_n}\E \big[d_{\sTV}(\widehat{\mu}_{G_n,u,l},\onu)^2\big] \le C \eps.
 \end{equation}
 \end{customthm}
 
The above theorem implies that all estimators $(\hat{\btheta}^{(l)})_{l \ge 1}$ where $\hat{\theta}^{(l)}_u \sim \widehat{\mu}_{u,l}$ independently for all $u \in V_n$, have almost trivial performance. Recall the definition of the matrix $\hatbX^{(l)}$:
\[\widehat{X}^{(l)}_{uv} = \E\Big[f(\theta_u)\big|Y^{(\eps)}_{B_{G_n}(u,l)}\Big]\cdot \E\Big[f(\theta_v)\big|Y^{(\eps)}_{B_{G_n}(v,l)}\Big],~~~ u , v \in V_n.\]
\begin{corollary}\label{coro:Loc-Zq}
In the setting of Theorem~\ref{local_alg_Z_q}, if $(1-p)^2 (k-1) < 1$, then there exists constants $c_1 ,c_2>0$ depending on $k$ and $q$ such that  
\[\limsup_{l \to \infty} \limsup_{n \to \infty} \E\big[\overlap\big(\hat{\btheta}^{(l)},\btheta_0\big)\big] \le \frac{1}{q} + c_1\sqrt{\eps}.\]
Moreover, for all $ f : \Z_q \mapsto \R$ with zero mean and unit variance,  
\[\liminf_{l \to \infty} \liminf_{n \to \infty}  \risk_n\big(\hatbX^{(l)};f\big) \ge 1 - c_2\|f\|_{\infty}^2\eps.\]
\end{corollary}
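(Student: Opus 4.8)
The plan is to derive both inequalities from Theorem~\ref{local_alg_Z_q}, which already contains all the substance. Throughout, abbreviate $D_{n,l} := \frac1n\sum_{u\in V_n}\E\big[d_{\sTV}(\widehat\mu_{u,l,G_n},\onu)^2\big]$, so that under the hypotheses $(1-p)^2(k-1)<1$ and $\eps\le c$, Theorem~\ref{local_alg_Z_q} reads $\limsup_{l\to\infty}\limsup_{n\to\infty}D_{n,l}\le C\eps$. Both parts of the corollary reduce, by elementary manipulations, to the statement that the local marginals are on average within $O(\sqrt\eps)$ of the uniform law.

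For the overlap bound, I would condition on $\mathcal{F}:=\sigma\big(Y^{(\eps)}_{G_n},\btheta_0\big)$; under this conditioning each $\widehat\mu_{u,l,G_n}$ is fixed and the $\hat\theta^{(l)}_u$ are independent with $\hat\theta^{(l)}_u\sim\widehat\mu_{u,l,G_n}$. For a fixed $\sigma\in\mathscr{S}_q$ put $Z_\sigma:=\frac1n\sum_{u\in V_n}\bfone\{\hat\theta^{(l)}_u=\sigma(\theta_{0,u})\}$, so that $\overlap(\btheta^{(l)},\btheta_0)=\max_\sigma Z_\sigma$. Then $\E[Z_\sigma\mid\mathcal{F}]=\frac1n\sum_u\widehat\mu_{u,l,G_n}(\sigma(\theta_{0,u}))\le\frac1q+\frac1n\sum_u d_{\sTV}(\widehat\mu_{u,l,G_n},\onu)$, using $|\mu(x)-\tfrac1q|\le d_{\sTV}(\mu,\onu)$ for every $x$. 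Given $\mathcal{F}$, $Z_\sigma-\E[Z_\sigma\mid\mathcal{F}]$ is an average of $n$ independent mean-zero terms of magnitude at most $1$, so by Hoeffding's inequality together with a union bound over the $q!$ permutations, $\E\big[\max_\sigma|Z_\sigma-\E[Z_\sigma\mid\mathcal{F}]|\ \big|\ \mathcal{F}\big]=O(n^{-1/2})$ with constant depending only on $q$. Combining these, taking expectations, and using $\E\big[\frac1n\sum_u d_{\sTV}(\widehat\mu_{u,l,G_n},\onu)\big]\le\sqrt{D_{n,l}}$ (Jensen) gives $\E[\overlap(\btheta^{(l)},\btheta_0)]\le\frac1q+\sqrt{D_{n,l}}+O(n^{-1/2})$. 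Letting $n\to\infty$, then $l\to\infty$, and invoking Theorem~\ref{local_alg_Z_q} yields the claim with $c_1=\sqrt C$.

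For the risk bound, I would expand $\risk_n(\hatbX^{(l)};f)=\frac1{n^2}\E\|\bX_f\|_F^2-\frac2{n^2}\E\langle\bX_f,\hatbX^{(l)}\rangle+\frac1{n^2}\E\|\hatbX^{(l)}\|_F^2$ and discard the last, nonnegative term. The first equals $\frac1{n^2}\sum_{u,v}\E[f(\theta_u)^2f(\theta_v)^2]\ge\frac{n(n-1)}{n^2}=1-\frac1n$, since the $n(n-1)$ off-diagonal terms equal $(\E f(\theta)^2)^2=1$ and the diagonal terms are nonnegative. For the cross term, write $\widehat X^{(l)}_{uv}=g_ug_v$ with $g_u:=\sum_x\widehat\mu_{u,l,G_n}(x)f(x)$. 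Because $\sum_x\tfrac1q f(x)=0$ and $\sum_xf(x)^2=q$ (zero mean, unit variance under $\onu$), Cauchy--Schwarz gives $g_u^2\le q\sum_x(\widehat\mu_{u,l,G_n}(x)-\tfrac1q)^2\le 2q\,d_{\sTV}(\widehat\mu_{u,l,G_n},\onu)^2$; then $|g_ug_v|\le\tfrac12(g_u^2+g_v^2)$ and $|f|\le\|f\|_\infty$ give $\big|\frac1{n^2}\E\langle\bX_f,\hatbX^{(l)}\rangle\big|\le\frac{\|f\|_\infty^2}{n}\sum_u\E[g_u^2]\le 2q\,\|f\|_\infty^2\,D_{n,l}$. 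Hence $\risk_n(\hatbX^{(l)};f)\ge 1-\frac1n-4q\,\|f\|_\infty^2\,D_{n,l}$; taking $\liminf_n$, then $\liminf_l$, and applying Theorem~\ref{local_alg_Z_q} gives the bound with $c_2=4qC$.

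I do not expect a genuine obstacle: the analytic content is entirely in Theorem~\ref{local_alg_Z_q}, and the two reductions above are bookkeeping with Cauchy--Schwarz and Jensen. The one point deserving care is the supremum over permutations in the definition of $\overlap$, but this is harmless since $q$, and therefore $q!$, is a fixed constant, so the $O(n^{-1/2})$ fluctuation term disappears in the $n\to\infty$ limit.
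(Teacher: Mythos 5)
Your proof is correct and follows essentially the same route as the paper's: both statements are reduced to Theorem~\ref{local_alg_Z_q} via Cauchy--Schwarz and Jensen, with the cross term $\frac{1}{n^2}\E\langle \bX_f,\hatbX^{(l)}\rangle$ controlled by the average squared distance of the local marginals from the uniform law. The one point where you go beyond the paper is the maximum over the $q!$ permutations in the definition of $\overlap$: the paper bounds $\E\big[\frac1n\sum_u \bfone\{\hat\theta^{(l)}_u=\sigma(\theta_{0,u})\}\big]$ only for each fixed $\sigma$ and leaves the interchange of the max with the expectation implicit, whereas your conditional Hoeffding bound plus a union bound over the (constantly many) permutations makes this step explicit at the cost of a vanishing $O(n^{-1/2})$ term --- a worthwhile clarification rather than a different argument.
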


\begin{remark}
The above implies that no local algorithm can estimate $\bX_f$ with non-trivial accuracy. Indeed, the estimator of $f(\theta_u)f(\theta_v)$ of minimal risk based on the information contained is the balls of radius $l$ centered around $u$ and $v$ respectively is $\E\big[f(\theta_u)f(\theta_v) | Y^{(\eps)}_{B_{G_n}(u,l) \cup B_{G_n}(v,l)}\big]$. The latter quantity is equal to $\widehat{X}^{(l)}_{uv}$ if the two balls are disjoint, which is the case for $1-o_n(1)$ fraction of pairs of vertices $(u,v)$ when $l$ is held constant.    
\end{remark}

The proof of Theorem~\ref{local_alg_Z_q} is deferred to Appendix \ref{sec:Proof_local_alg_Z_q}, but we give here an outline.
We use  local--weak convergence to first lift the problem to the infinite $k$-regular tree, in which the study of the local marginals reduces to the study of a certain distributional recursion. Then we prove that below the Kesten-Stigum threshold, the uniform distribution $\onu$ is a stable fixed point of this recursion. The argument proceeds as follows.
 Let $o$ be the root of infinite $(k-1)$--ary tree $T$ and denote by $T(l)$ the subtree consisting of the first $l$ generations of $T$ rooted at $o$. Now let $\mu_{o,l}(x) :=\P\big(\theta_o = x | \bY^{(\eps)}_{T_k(l)}\big)$ for all $x \in \Z_q$ and consider the sequence $z_l :=  \E[\mu_{o,l} (\theta_o)| \xi_o = \star] - \frac{1}{q}$ which measures the deviation from uniformity of the local marginal at the root. We use the recursive structure of the tree to show that for $\eps$ small enough and $\kappa = (1-p)^2(k-1)$, the sequence $(z_l)_{l \ge 0}$ satisfies the approximate recursion
\begin{equation}\label{recursion}
\big|z_{l+1} - (1-\eps)\kappa z_l - \eps\kappa \frac{q-1}{q}\big| \le C(q) \kappa^2 \big( z_l^2+ \eps^2\big),
\end{equation}
where $C(q)$ is constant depending only on $q$. Since $z_0 = 0$, this implies that if $\kappa <1$ then the sequence stays within an interval of size  $C'(q,\kappa)\eps$ around the origin. This, in turn, can be converted to the claim of Theorem~\ref{local_alg_Z_q}. The analysis of this recursion originates in the study of the 
robust reconstruction problem on the tree. In this problem, a spin at the root (an $\cX$-valued r.v.) is broadcast through noisy channels along edges of the tree.
The statistician observes a noisy realization of this process on the leaves of $T(l)$ for large $l$, and is tasked with inferring the value at the root (see e.g., \cite{evans2000broadcasting,mossel2003information,janson2004robust}). Similar recursions also arise in the study of the `robustness' of phase transitions in the Ising model on the tree~\cite{pemantle1999robust}. In particular, our analysis builds on ideas from~\cite{mezard2006reconstruction,sly2011}.

\section{Proof of Proposition~\ref{L2_statement}}
\label{sec:Proof_L2Statement} 

We start with the proof of~\eqref{local}, which is straightforward and does not need the amenability assumption. The proof of~\eqref{global} will crucially hinge upon a property of decay of certain point--to--set correlations (Lemma~\ref{triviality}), which we establish using anchored--amenability and the presence of $\eps$--side information.   
 For ease of notation, we adopt the following convention in this section: in quantities of the form  $\P\big(\theta_o = x | Y^{(\eps)}_{A}\big)$ where $A$ is any subgraph of $G$, it is implicit that the rooted graph $(G,o)$ is also conditioned on, abbreviating the more accurate but lengthier notation $\P\big(\theta_o = x | (G,o),Y^{(\eps)}_{A}\big)$.
 
\subsection{Proof of the `local' statement~\eqref{local}}
Let $x \in \cX$ and $l \ge 1$. The function $f:\mathcal{G}_* \to [0,1]$ defined by $f(G,o) = \E\big[\P\big(\theta_o = x | Y^{(\eps)}_{[G,o]_l}\big)^2\big]$ is clearly continuous in the topology of local  convergence. Indeed for $(G_n,o_n) \xrightarrow[]{loc.} (G,o)$, let $n_0 \ge 1$ such that $[G_n,o_n]_l \equiv [G,o]_l$ for all $n \ge n_0$. Hence $f(G_n,o_n) = f(G,o)$ for all $n \ge n_0$.  
Since $f$ is also bounded, we obtain by local--weak convergence under uniform rooting that 
\begin{align*}
\frac{1}{|V_n|} \sum_{u \in V_n} \E\big[\P\big(\theta_u = x | Y^{(\eps)}_{B_{G_n}(u,l)}\big)^2\big] &=
\frac{1}{|V_n|} \sum_{u \in V_n} \E\big[\P\big(\theta_u = x | Y^{(\eps)}_{[G_n,u]_l}\big)^2\big] \\
&= \E_{\rho_{G_n}} \big[f(G_n,o_n)\big]\\
&\xrightarrow[n\to \infty]{}   \E_{\rho} \big[f(G,o)\big]\\
&= \E\Big[\P\big(\theta_o = x | Y^{(\eps)}_{[G,o]_l}\big)^2\Big].
\end{align*}
Next, we observe that the sequence $\big(\P\big(\theta_o = x | (G,o),
Y^{(\eps)}_{[G,o]_l}\big)\big)_{l \ge 1}$ is a bounded martingale,
therefore it converges almost surely and in 
$\mathbb{L}_2$ to $\P\big(\theta_o = x | Y^{(\eps)}_{G}\big)$ by L\'evy's upward  theorem. This concludes the proof of the first statement~\eqref{local}:
\[\lim_{l\to \infty} \lim_{n\to \infty}\frac{1}{|V_n|} \sum_{u \in V_n} \E\big[\P\big(\theta_u = x | Y^{(\eps)}_{B_{G_n}(u,l)}\big)^2\big]  = \E\Big[\P\big(\theta_o = x | Y^{(\eps)}_{G}\big)^2\Big].\]      

\subsection{Proof of the `global' statement~\eqref{global}}
The proof breaks into three parts. First, we easily obtain a lower bound from Jensen's inequality: 
\[\E\Big[\P\big(\theta_u = x | Y^{(\eps)}_{G_n}\big)^2\Big] = \E\Big[\E\big[\P\big(\theta_u = x | Y^{(\eps)}_{G_n}\big)^2 | Y^{(\eps)}_{B_{G_n}(u,l)}\big]\Big] \ge \E\Big[\P\big(\theta_u = x | Y^{(\eps)}_{B_{G_n}(u,l)}\big)^2\Big].\]  
Therefore
\begin{align}
\liminf_{n \to \infty} \frac{1}{|V_n|} \sum_{u \in V_n} \E\big[\mu_{G_n,u}^2(x)\big] &\ge  \lim_{l \to \infty} \lim_{n \to \infty} \frac{1}{|V_n|} \sum_{u \in V_n} \E\Big[\P\big(\theta_u = x | (G,o), Y^{(\eps)}_{B_{G}(u,l)}\big)^2\Big] \nonumber\\
&= \E\big[\mu^2_{G,o}(x)\big],\label{lower_bound}
\end{align}
where the last equality is the content of statement~\eqref{local}. As for the upper bound, we have 
\begin{lemma}
Consider the $\sigma$-algebra 
\[\mathcal{T}^{(\eps)}_{\infty} = \bigcap_{l \ge 1}\sigma\Big(\big\{Y_{uv} : (u,v)\in E\big\}\cup\big\{\xi^{(\eps)}_u: u \in V\big\}\cup\big\{ \theta_u : u \in V : d_G(o,u) \ge l\big\}\Big),\] where  $d_G$ is the distance in $G$. Then
\begin{equation}\label{upper_bound}
\limsup_{n \to \infty} \frac{1}{|V_n|} \sum_{u \in V_n} \E\big[\mu_{G_n,u}^2(x)\big] \le 
\E\Big[\P\big(\theta_o = x | (G,o),\mathcal{T}^{(\eps)}_{\infty}\big)^2\Big].
\end{equation}  
\end{lemma}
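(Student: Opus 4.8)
The plan is to dominate $\E[\mu_{u,G_n}(x)^2]$ by the second moment of a coarser posterior that additionally reveals all vertex labels far from $u$, to recognize that coarser quantity as \emph{local}, and then to pass to the limit first in $n$ (via local--weak convergence) and then in $l$ (via a reverse martingale). Fix $x\in\cX$ and $l\ge 1$ and set
\[
\mu^{(l)}_{u,G_n}(x) \;:=\; \P\!\big(\theta_u=x \,\big|\, Y^{(\eps)}_{G_n},\,\{\theta_i : d_{G_n}(u,i)\ge l\}\big)\,.
\]
Since $\mu_{u,G_n}(x)=\E[\mu^{(l)}_{u,G_n}(x)\mid Y^{(\eps)}_{G_n}]$, conditional Jensen gives $\E[\mu_{u,G_n}(x)^2]\le \E[\mu^{(l)}_{u,G_n}(x)^2]$ for every $l$, so it suffices to bound the average of the latter over $u\in V_n$. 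Note that this lemma --- unlike the remainder of the proof of~\eqref{global} --- does not use amenability; amenability will only be invoked afterwards, to show that $\mathcal{T}_{\infty}$ carries no more information about $\theta_o$ than $Y^{(\eps)}_{G}$ does.

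Next I would argue that $\mu^{(l)}_{u,G_n}(x)$ depends only on the radius-$l$ ball around $u$. Viewing the model as a factor graph --- i.i.d.\ uniform priors on the $\theta_v$, an edge factor $Q(Y_{vw}\mid\theta_v,\theta_w)$ per edge, a $\BEC(\epsb)$ factor at each vertex --- the only factors touching an unrevealed variable $\theta_j$ (those with $d_{G_n}(u,j)\le l-1$) are edge factors with both endpoints in $B_{G_n}(u,l)$ and vertex factors inside that ball; hence the conditional law of $\theta_u$ given $(Y^{(\eps)}_{G_n},\{\theta_i:d_{G_n}(u,i)\ge l\})$ equals its conditional law given only $(Y^{(\eps)}_{B_{G_n}(u,l)},\{\theta_i:d_{G_n}(u,i)=l\})$, so
\[
\mu^{(l)}_{u,G_n}(x)\;=\;\P\!\big(\theta_u=x \,\big|\, Y^{(\eps)}_{B_{G_n}(u,l)},\,\{\theta_i : d_{G_n}(u,i)=l\}\big)\,,
\]
whose law is determined by the rooted isomorphism class $[G_n,u]_l$. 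Consequently $f_l:\mathcal{G}_*\to[0,1]$, $f_l(H,o):=\E[\mu^{(l)}_{o,H}(x)^2]$, is bounded and depends on $(H,o)$ only through $[H,o]_l$, hence is continuous in the topology of local convergence, and local--weak convergence under uniform rooting yields
\[
\frac1n\sum_{u\in V_n}\E\big[\mu^{(l)}_{u,G_n}(x)^2\big]=\E_{\rho_{G_n}}[f_l]\;\xrightarrow[n\to\infty]{}\;\E_{\rho_{G}}[f_l]=\E\big[\mu^{(l)}_{o,G}(x)^2\big]\,,
\]
the last equality because $\rho_G$ is an atom at $(G,o)$ by transitivity. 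Combined with the conditional-Jensen bound, $\limsup_{n}\frac1n\sum_u\E[\mu_{u,G_n}(x)^2]\le \E[\mu^{(l)}_{o,G}(x)^2]$ for every $l$.

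To finish I would let $l\to\infty$. On the infinite graph, $\mathcal{F}_l:=\sigma\big(\{Y_{uv}:(u,v)\in E\}\cup\{\xi^{(\eps)}_u:u\in V\}\cup\{\theta_i:d_G(o,i)\ge l\}\big)$ is nonincreasing in $l$ with $\bigcap_l\mathcal{F}_l=\mathcal{T}_{\infty}$, and $\mu^{(l)}_{o,G}(x)=\P(\theta_o=x\mid\mathcal{F}_l)$; L\'evy's downward theorem (reverse martingale convergence) then gives $\mu^{(l)}_{o,G}(x)\to\P(\theta_o=x\mid\mathcal{T}_{\infty})$ almost surely, hence in $L^2$ by boundedness, so $\E[\mu^{(l)}_{o,G}(x)^2]\to\E[\P(\theta_o=x\mid\mathcal{T}_{\infty})^2]$. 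This proves the bound for each $x$, and summing over the finitely many $x\in\cX$ (using that the $\limsup$ of a finite sum is at most the sum of the $\limsup$s) gives~\eqref{upper_bound}. The one point that needs genuine care is the locality claim --- that revealing every label outside $B_{G_n}(u,l-1)$ screens $\theta_u$ off from all observations and labels beyond the ball --- which must be read off carefully from the conditional-independence structure of the model; everything else is conditional Jensen, a bounded continuous test function against Benjamini--Schramm convergence, and a reverse martingale limit.
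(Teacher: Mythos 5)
Your proposal is correct and follows essentially the same route as the paper: a conditional-Jensen domination by the posterior that additionally reveals far labels, the spatial Markov screening to localize that quantity to the radius-$l$ ball, Benjamini--Schramm convergence against a bounded local test function, and L\'evy's downward theorem to identify the $l\to\infty$ limit with $\P(\theta_o=x\mid\mathcal{T}_{\infty})$. The only cosmetic difference is that you condition on all labels at distance $\ge l$ from the outset, whereas the paper first conditions on the boundary $\theta_{\partial B(u,l)}$ and only augments to the full exterior before invoking the backward martingale; both are justified by the same Markov property.
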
    

\begin{proof}
Fix $u \in V_n$ and $x \in \cX$. We condition on the r.v.'s $\theta_{\partial B_{G_n}(u,l)} := \{\theta_{v} : v \in V_n, d_{G_n}(u,v)=l\}$ and apply Jensen's inequality: 
\[\E\Big[\P\big(\theta_u = x | Y^{(\eps)}_{G_n}\big)^2\Big] \le \E\Big[\P\big(\theta_u = x | Y^{(\eps)}_{G_n}, \theta_{\partial B_{G_n}(u,l)}\big)^2\Big].\]
We now observe that conditionally on the boundary variables $\theta_{\partial B_{G_n}(u,l)}$, $\theta_u$ is independent of $Y^{(\eps)}_{vw}$ for all $v$ and $w$ outside the ball $B_{G_n}(u,l)$. This is guaranteed by the spatial Markov property of the model. Therefore 
\[\E\Big[\P\big(\theta_u = x | Y^{(\eps)}_{G_n}, \theta_{\partial B_{G_n}(u,l)}\big)^2\Big] = \E\Big[\P\big(\theta_u = x | Y^{(\eps)}_{B_{G_n}(u,l)}, \theta_{\partial B_{G_n}(u,l)}\big)^2\Big].\]
The event on the right--hand side is localized to a ball of fixed radius. So by local--weak convergence, we pass to the limiting rooted graph $(G,o)$, (similarly to the proof of~\eqref{local}):
\[\limsup_{n \to \infty} \frac{1}{|V_n|} \sum_{u \in V_n} \E\big[\mu_{G_n,u}^2(x)\big] \le  \E\Big[\P\big(\theta_o = x | (G,o), Y^{(\eps)}_{B_G(o,l)}, \theta_{\partial B_G(o,l)}\big)^2\Big].\]   
Now using the same Markov property as above, the expectation in the right--hand side remains unchanged if we further condition on $\cF_{o}^{ \ge l} := \{\theta_{v} : v \in B_G(o,l)^c\}$ and $\{Y^{(\eps)}_{u,v} : u,v \in B_G(o,l)^c\}$, which are beyond the boundary of $B_G(o,l)$: the extra information is irrelevant to $\theta_o$. We arrive at the upper bound
\[\sum_{x \in \cX} \E\Big[\P\big(\theta_o = x | (G,o), Y^{(\eps)}_{G}, \cF_{o}^{ \ge l}\big)^2\Big].\] 
Now we observe that the sequence $\big(\P\big(\theta_o = x | (G,o), Y^{(\eps)}_{G}, \cF_{o}^{ \ge l}\big)\big)_{l\ge 1}$ is a bounded \emph{backward} martingale (since the corresponding filtration is decreasing), which converges to $\P\big(\theta_o = x | (G,o),\mathcal{T}^{(\eps)}_{\infty}\big)$ a.s.\ and in $\mathbb{L}_2$ by L\'evy's downward theorem. 
This concludes the argument. 
\end{proof}%

The last piece of the proof is to show that the lower and upper bounds~\eqref{lower_bound} and~\eqref{upper_bound} coincide when $(G,o)$ is a.s.\ anchored--amenable:
\begin{proposition}\label{triviality}
Assume $(G,o)\sim \rho$ is unimodular, almost surely anchored--amenable and tame. Then for almost every $\eps>0$ and all $x \in \cX$,
 \[\P\Big(\theta_o = x \Big| (G,o), \mathcal{T}^{(\eps)}_{\infty}\Big) = \P\Big(\theta_o = x \Big| (G,o), Y^{(\eps)}_{G}\Big) \qquad \mbox{a.s.}\]
\end{proposition}

This is the only part of the proof which requires assumptions of the limiting random rooted graph, and the presence of non-zero side information from \BEC($\epsb$). We reiterate that unimodularity is guaranteed if $(G,o)$ is the limit of a sequence of finite graphs (see Section~\ref{sec:Background}), so it is automatically satisfied in our setting.

The first ingredient in the proof of Proposition~\ref{triviality} is the following generic lemma that allow to control the dependency under the posterior between variables $\theta_u$ associated to
vertices in the interior of a set $S \subset V(G)$ and variables $\theta_{\partial S}$ associated to the boundary of this set.
Our first lemma bounds the mutual information between $\theta_u$ and $\theta_{\partial S}$. This result is inspired by Lemma 3.1 in~\cite{MontanariSparse}.
Let us recall the definition of conditional mutual information between $X$ and $Y$ given $Z$: $I(X;Y|Z) = H(X|Z)-H(X|Y,Z) = H(Y|Z)-H(Y|X,Z)$, where $H(X|Y) = H(X,Y)-H(Y)$ is the conditional entropy.
\begin{lemma}\label{mutual_info}
Let $G$ be a graph, and $S \subset V(G)$ finite and non-empty. For all $\eps \ge 0$, we have
\[\sum_{u \in S} \int_0^{\eps} I\Big(\theta_u ; \theta_{\partial S} \Big| Y^{(\eps')}_{S}\Big) \rmd \eps' \le \log |\cX| \cdot |\partial S|.\]
\end{lemma}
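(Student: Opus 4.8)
\textbf{Proof plan for Lemma~\ref{mutual_info}.}

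The plan is to exploit the fact that integrating the conditional mutual information over the side-information parameter $\eps'$ yields a telescoping/entropy-difference quantity that can be controlled by a crude entropy bound. The key observation is that $\xi^{(\eps')}$ is a \emph{degraded} version of the full labels: as $\eps'$ increases, more of the $\theta_i$'s are directly revealed, so the family of $\sigma$-algebras generated by $Y^{(\eps')}_S$ is (stochastically) increasing in $\eps'$, interpolating from $Y^{(0)}_S$ (no vertex observations) up to the case where all of $\theta_S$ is observed. First I would fix $S$ and $o\in S$, and for a fixed vertex $u\in S$ consider the scalar function $\eps'\mapsto g_u(\eps') := H\big(\theta_u \mid Y^{(\eps')}_S\big)$. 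I would show that $g_u$ is differentiable (or at least absolutely continuous) in $\eps'$ and that its derivative is, up to the right constant, exactly $-I\big(\theta_u;\theta_{\partial S}\mid Y^{(\eps')}_S\big)$, or more precisely that $I\big(\theta_u;\theta_{\partial S}\mid Y^{(\eps')}_S\big) \le -\frac{d}{d\eps'} H\big(\theta_u\mid Y^{(\eps')}_S\big)$ pointwise, up to summing over $u$. The mechanism: revealing the label of a vertex $v\in\partial S$ (which happens at rate $d\eps'$) can only decrease $H(\theta_u\mid \cdot)$, and the expected decrease when $v$ is revealed is $I(\theta_u;\theta_v \mid Y^{(\eps')}_S)$; summing the contribution of the boundary vertices and using the chain rule / superadditivity to compare $\sum_{v\in\partial S} I(\theta_u;\theta_v\mid\cdot)$ against $I(\theta_u;\theta_{\partial S}\mid\cdot)$ gives the bound in the correct direction (the single-vertex-revelation rate overcounts, so the inequality is the safe way, or one works directly with $\theta_{\partial S}$ as a block).

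Concretely, I would realize the \BEC\ jointly for a grid of values $0 = \eps'_0 < \eps'_1 < \cdots$, using a standard monotone coupling so that $\xi^{(\eps')}_v$ reveals $\theta_v$ at an independent uniform threshold $U_v\in[0,1]$; then $Y^{(\eps')}_S$ is a monotone function of the thresholds. Differentiating $H(\theta_u\mid Y^{(\eps')}_S)$ in $\eps'$: the only thresholds crossing the level $\eps'$ in $[\eps',\eps'+d\eps']$ are those of boundary-and-interior vertices, and conditioning on one more revealed label $\theta_v$ drops the entropy by $I(\theta_u;\theta_v\mid Y^{(\eps')}_S, \text{others})$ in expectation; summing over $v$ and over $u\in S$, and recognizing $\sum_{v} I(\theta_u;\theta_v\mid\cdot) \ge I(\theta_u;\theta_{\partial S}\mid\cdot)$ is false in general, so instead I would directly track revelation of the \emph{whole} boundary block and note that what matters is that the total entropy drop over the whole interval $[0,\eps]$ telescopes: $\sum_{u\in S}\big(H(\theta_u\mid Y^{(0)}_S) - H(\theta_u\mid Y^{(\eps)}_S)\big) \le \sum_{u\in S} H(\theta_u) \le |S|\log|\cX|$. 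The cleanest route is to write $\int_0^\eps I(\theta_u;\theta_{\partial S}\mid Y^{(\eps')}_S)\,d\eps'$ as the integral of $H(\theta_{\partial S}\mid Y^{(\eps')}_S) - H(\theta_{\partial S}\mid Y^{(\eps')}_S,\theta_u)$ and bound it by $H(\theta_{\partial S})\le |\partial S|\log|\cX|$ for \emph{each} $u$, which would give the weaker bound $|S||\partial S|\log|\cX|$; to get the claimed $|\partial S|\log|\cX|$ I must instead fix $u$ not in the sum but exploit that the left side is $\sum_{u\in S} \int_0^\eps [H(\theta_u\mid Y^{(\eps')}_S) - H(\theta_u\mid Y^{(\eps')}_S,\theta_{\partial S})]\,d\eps'$ and that $\frac{d}{d\eps'}\sum_{u\in S} H(\theta_u\mid Y^{(\eps')}_S)$, when the revealed vertex lies in $\partial S$, picks up exactly $-\sum_{u\in S} I(\theta_u;\theta_{v}\mid \cdot)$, and this dominates the integrand when aggregated because $\theta_u \perp (\text{outside } S)\mid \theta_{\partial S}, Y^{(\eps')}_S$ by the spatial Markov property. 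So the argument is: (i) couple the channels monotonically; (ii) differentiate the summed conditional entropy $\Phi(\eps') := \sum_{u\in S} H(\theta_u\mid Y^{(\eps')}_S)$; (iii) show $-\Phi'(\eps') \ge \sum_{u\in S} I(\theta_u;\theta_{\partial S}\mid Y^{(\eps')}_S)$ using that only boundary revelations can affect interior marginals once the remaining inside structure is accounted for, together with the Markov property and subadditivity of entropy over the $|\partial S|$ boundary vertices; (iv) integrate: $\int_0^\eps (-\Phi'(\eps'))\,d\eps' = \Phi(0) - \Phi(\eps) \le \Phi(0) \le \sum_{u\in S}\log|\cX| $... and here I see the bound should actually land at $|\partial S|\log|\cX|$, which tells me step (iii) must be sharpened so that the telescoped quantity is $\sum_{v\in\partial S} [H(\theta_v) - H(\theta_v\mid \ldots)] \le |\partial S|\log|\cX|$, i.e.\ I should differentiate $\sum_{v\in\partial S} H(\theta_v\mid Y^{(\eps')}_{S})$-type quantities rather than $\sum_{u\in S}$ ones, swapping the roles via the symmetry $I(\theta_u;\theta_{\partial S}\mid\cdot)$, and using that revealing a label outside $S$ does not affect anything conditioned on $Y^{(\eps')}_S$ here.

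The main obstacle is precisely this bookkeeping: getting the factor $|\partial S|$ rather than $|S|\,|\partial S|$ on the right-hand side. This forces the differentiation-in-$\eps'$ argument to be set up so that the quantity being telescoped over $[0,\eps]$ has total variation bounded by $\log|\cX|$ per \emph{boundary} vertex, not per interior vertex. I would handle this by writing the integrand as $\sum_{u\in S} I(\theta_u;\theta_{\partial S}\mid Y^{(\eps')}_S)$, bounding it by $\sum_{v\in\partial S}\sum_{u\in S} I(\theta_u;\theta_v\mid Y^{(\eps')}_S, \theta_{\partial S\setminus v})$ via the chain rule for mutual information over the boundary block, then recognizing that $\sum_{u\in S} I(\theta_u; \theta_v\mid \cdots)$ is at most the rate at which the entropy $H(\theta_v\mid Y^{(\eps')}_S)$ would change were we to reveal the $\theta_u$'s --- but since the $\theta_u$ for $u\in S$ are exactly what the process $\eps'\mapsto\xi^{(\eps')}_S$ progressively reveals, this sum integrates over $\eps'\in[0,\eps]$ to at most $H(\theta_v)\le\log|\cX|$, and summing over $v\in\partial S$ gives $|\partial S|\log|\cX|$. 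So the final structure is: chain-rule decomposition over the boundary block, then for each boundary vertex $v$ an integration-by-$\eps'$ / data-processing argument bounding $\int_0^\eps \sum_{u\in S} I(\theta_u;\theta_v\mid Y^{(\eps')}_S,\theta_{\partial S\setminus v})\,d\eps'$ by $H(\theta_v)$, using the monotone coupling of the erasure channels on the vertices of $S$ and the identity that the expected one-step entropy drop upon revealing $\theta_u$ equals the relevant conditional mutual information. Verifying the measure-theoretic regularity (differentiability a.e.\ in $\eps'$, dominated convergence to justify interchanging $\int d\eps'$ and the limiting/summation operations) is routine given boundedness of entropies by $\log|\cX|$, and that is why the statement is only claimed "for almost every $\eps$" downstream.
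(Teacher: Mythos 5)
Your core mechanism is the paper's: perturb the erasure parameter, observe that the derivative of a conditional entropy in $\eps$ equals minus a sum over $u\in S$ of conditional mutual informations, and telescope over $[0,\eps]$ so that the total drop is bounded by the unconditional entropy of the boundary block, $H(\theta_{\partial S})\le |\partial S|\log|\cX|$. The paper executes this in the cleanest form, which you circle around but do not quite land on: it differentiates $H(\theta_{\partial S}\mid Y^{(\eps)}_S)$ directly as a \emph{block}, using the identity $H(\theta_{\partial S}\mid Y,\xi)=\eps_u H(\theta_{\partial S}\mid Y,\xi^{\backslash(u)},\theta_u)+(1-\eps_u)H(\theta_{\partial S}\mid Y,\xi^{\backslash(u)})$, whence $\tfrac{\rmd}{\rmd\eps_u}H(\theta_{\partial S}\mid Y,\xi)=-I(\theta_u;\theta_{\partial S}\mid Y,\xi^{\backslash(u)})$; setting all $\eps_u=\eps$ and integrating gives the factor $|\partial S|$ immediately, with no per-boundary-vertex decomposition at all.

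Two concrete problems with the structure you finally commit to. First, the decomposition $I(\theta_u;\theta_{\partial S}\mid\cdot)\le\sum_{v\in\partial S} I(\theta_u;\theta_v\mid\cdot,\theta_{\partial S\setminus v})$ is not a valid inequality: the chain rule conditions the $v$-th term on the \emph{previously listed} boundary vertices (and then gives equality), whereas conditioning on all the other boundary vertices can strictly decrease individual terms, so the sum need not dominate the block quantity. Either use a genuine chain-rule ordering (conditioning on $\theta_{<v}$), or better, avoid the decomposition entirely as the paper does. Second, the derivative-in-$\eps$ identity naturally produces $I(\theta_u;\theta_{\partial S}\mid Y,\xi^{\backslash(u)})$, i.e., conditioning on the side information at every vertex \emph{except} $u$, whereas the lemma's integrand conditions on the full $Y^{(\eps')}_S$ including $\xi_u$. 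One still needs the observation that $I(\theta_u;\theta_{\partial S}\mid Y,\xi)\le I(\theta_u;\theta_{\partial S}\mid Y,\xi^{\backslash(u)})$, which holds because the left-hand side vanishes on the event $\xi_u\neq\star$; your proposal never addresses this matching step.
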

\begin{proof}
The argument relies on differentiating the conditional Shannon entropy of $\theta_{\partial S}$ given $Y^{(\eps)}_{S}$ with respect to $\eps$. Let us first replace the single parameter $\eps$ (the probability of non-erasure) by a set of parameters $\underline{\eps} = (\eps_u)_{u \in S}$: for each vertex $u$, $\theta_u$ is revealed with probability $\eps_u$. We also replace the notation $Y^{(\underline{\eps})}_{S}$ by $(Y, \xi)$, omitting an explicit reference to $\underline{\eps}$ and to the ball $S$. We finally denote $\xi^{\backslash (u)} = \{\xi_v : v \in S, v \neq u\}$ with $\xi_u$ removed. 
We have 
\[H(\theta_{\partial S} | Y,\xi) = \eps_u H\big(\theta_{\partial S} | Y,\xi^{\backslash (u)}, \theta_u \big) + (1-\eps_u)H\big(\theta_{\partial S} | Y,\xi^{\backslash (u)}\big).\]
Taking a derivative w.r.t.\ $\eps_u$ yields:
\begin{align*}
\frac{\rmd}{\rmd \eps_u}H(\theta_{\partial S} | Y,Z) &=  H\big(\theta_{\partial S} | Y,\xi^{\backslash (u)}, \theta_u \big) - H\big(\theta_{\partial S} | Y,\xi^{\backslash (u)}\big)\\
&= - I\big(\theta_u; \theta_{\partial S} | Y,\xi^{\backslash (u)}\big),
\end{align*}
 where the latter is the conditional mutual information of $\theta_u$ and $\theta_{\partial S}$ given $(Y,\xi^{\backslash (u)})$. Now we set $\eps_u = \eps$ for all $u \in S$. We obtain 
 \[\frac{\rmd}{\rmd \eps}H(\theta_{\partial S} | Y,\xi) = -\sum_{u \in S}  I\big(\theta_u; \theta_{\partial S} | Y,\xi^{\backslash (u)}\big).\]
 We now integrate w.r.t.\ $\eps$:
 \begin{align*}
 \int_0^{\eps} \sum_{u \in S}  I\big(\theta_u; \theta_{\partial S} | Y,\xi^{\backslash (u)}\big) \rmd \eps' &=H(\theta_{\partial S} | Y,\xi^{(\eps = 0)}) - H(\theta_{\partial S} | Y,\xi^{(\eps)})\\
 &\le H(\theta_{\partial S} | Y,\xi^{(\eps = 0)})\\
 &\le  H(\theta_{\partial S})\\
 &\le \sum_{u \in \partial S} H(\theta_u)\\
 &= \log |\cX| \cdot | \partial S|.
 \end{align*}
The second line is by positivity of entropy, the third line follows from the fact that conditioning reduces the entropy, the fourth line is by sub-additivity, and the last line is since $\theta_u$ is marginally uniform on $\cX$.   
Now we finish the proof by observing that $I\big(\theta_u; \theta_{\partial S} | Y,\xi\big) = I\big(\theta_u; \theta_{\partial S} | Y,\xi^{\backslash (u)}, \xi_u\big) \le I\big(\theta_u; \theta_{\partial S} | Y,\xi^{\backslash (u)}\big)$ because the left--hand side vanishes whenever $\xi_u \neq \star$.
\end{proof}

Next, we translate Lemma~\ref{mutual_info} into an average statement about decay to point--to--set correlations:
\begin{lemma}\label{lem:point_to_set}
Let $G$ be a graph and $S \subset V(G)$ finite and non-empty. Then for all $\eps>0$,
\[ \frac{1}{|S|}\sum_{u \in  S} \int_{0}^{\eps} \sum_{x \in \cX}\E\Big[\Big(\P\big(\theta_u = x | \mathcal{T}^{(\eps')}_{\infty}\big) - \P\big(\theta_u = x | Y^{(\eps')}_{G}\big)\Big)^2\Big]  \rmd \eps' \le  \sqrt{8\eps \log |\cX|  \cdot |\partial S| / |S|}.\] 
\end{lemma}
\begin{proof}
Let $L \ge \text{diam}(S)$ so that $S \subseteq B_G(u,L)$ for every $u\in S$. By Jensen's inequality we have
\[ \E\Big[\P\Big(\theta_u = x \big| Y^{(\eps)}_{B(u,L)}, \theta_{\partial B(u,L)}\Big)^2\Big] 
\le
\E\Big[\P\Big(\theta_u = x \big| Y^{(\eps)}_{S}, \theta_{\partial S}\Big)^2\Big],\]
and
\[\E\Big[\P\Big(\theta_u = x \big| Y^{(\eps)}_{B(u,L)}\Big)^2\Big]  
\ge 
\E\Big[\P\Big(\theta_u = x  \big| Y^{(\eps)}_{S}\Big)^2\Big].\]
Therefore,
\begin{align}\label{ineq:first} 
&\E\Big[\Big( \P\big(\theta_u = x \big| Y^{(\eps)}_{B(u,L)}, \theta_{\partial B(u,L)}\big) 
 - 
\P\big(\theta_u = x \big| Y^{(\eps)}_{B(u,L)}\big)\Big)^2\Big]    \\
 &\qquad \qquad=
 \E\Big[\P\big(\theta_u = x \big| Y^{(\eps)}_{B(u,L)}, \theta_{\partial B(u,L)}\big)^2\Big] 
 - 
 \E\Big[\P\big(\theta_u = x \big| Y^{(\eps)}_{B(u,L)}\big)^2\Big]  \nonumber \\
&\qquad \qquad\le
\E\Big[\P\big(\theta_u = x \big| Y^{(\eps)}_{S}, \theta_{\partial S}\big)^2\Big]
- 
\E\Big[\P\big(\theta_u = x  \big| Y^{(\eps)}_{S}\big)^2\Big] \nonumber \\
&\qquad \qquad = \E\Big[\Big(\P\big(\theta_u = x \big| Y^{(\eps)}_{S}, \theta_{\partial S}\big) - \P\big(\theta_u = x  \big| Y^{(\eps)}_{S}\big)\Big) ^2\Big] \nonumber\\
&\qquad \qquad\le 2  \E\Big[\Big|\P\big(\theta_u = x \big| Y^{(\eps)}_{S}, \theta_{\partial S}\big) - \P\big(\theta_u = x  \big| Y^{(\eps)}_{S}\big)\Big| \Big].\nonumber
\end{align}
The last quantity is equal to
\begin{align*}
&  2\E\Big[ \sum_{\sigma \in \cX^{\partial S}}\P\big(\theta_{\partial S} = \sigma \big| Y^{(\eps)}_{S}\big) \cdot \Big| \P\big(\theta_u = x \big| Y^{(\eps)}_{S}, \theta_{\partial S} = \sigma \big) - \P\big(\theta_u = x \big| Y^{(\eps)}_{S}\big)\Big|\Big]\\
&\qquad = 2\sum_{\sigma \in \cX^{\partial S}} \E\Big| \P\Big(\theta_u = x, \theta_{\partial S} = \sigma  \big| Y^{(\eps)}_{S}\Big) - \P\Big(\theta_u = x \big| Y^{(\eps)}_{S}\Big) \cdot \P\Big( \theta_{\partial S} = \sigma\big| Y^{(\eps)}_{S}\Big)\Big|. 
\end{align*}
Summing over $x$ and using~\eqref{ineq:first} we get
\begin{align*}
 \sum_{x \in \cX} \E\Big[\Big( &\P\big(\theta_u = x \big| Y^{(\eps)}_{B(u,L)}, \theta_{\partial B(u,L)}\big) 
 - 
\P\big(\theta_u = x \big| Y^{(\eps)}_{B(u,L)}\big)\Big)^2\Big]   \nonumber \\
& \le 4\E d_{\sTV}\Big( \P\big((\theta_u , \theta_{\partial S}) \in \cdot  | Y^{(\eps)}_{S}\big) ,~ \P\big(\theta_u \in \cdot  | Y^{(\eps)}_{S}\big) \times \P\big( \theta_{\partial S} \in \cdot | Y^{(\eps)}_{S}\big) \Big).
\end{align*}
We send $L$ to infinity and use martingale convergence on the left-hand side, and use Pinsker's inequality on the right-hand side to obtain  
\begin{align*}
 \sum_{x \in \cX}\E\Big[\Big(\P\big(\theta_u = x | \mathcal{T}^{(\eps)}_{\infty}\big)
-
\P\big(\theta_u = x | Y^{(\eps)}_{G}\big)\Big)^2\Big] \le 4\sqrt{I\Big(\theta_u;\theta_{\partial S} \Big| Y^{(\eps)}_{S}\Big)\big/ 2}.
\end{align*}
Now we obtain the desired result by averaging over $u \in S$ and $\eps$ and using Jensen's inequality, and then invoking Lemma~\ref{mutual_info}:
\begin{align*}
 \frac{1}{|S|}\sum_{u \in  S} \int_0^{\eps} \sum_{x \in \cX} &\E\Big[\Big(\P\big(\theta_u = x | \mathcal{T}^{(\eps')}_{\infty}\big)-\P\big(\theta_u = x | Y^{(\eps')}_{G}\big)\Big)^2\Big] \rmd \eps' \\
& \le 4\sqrt{\eps \int_0^\eps  \frac{1}{|S|}\sum_{u \in  S} I\Big(\theta_u;\theta_{\partial S} \Big| Y^{(\eps')}_{S}\Big) \rmd \eps\big/ 2}\\
&\le \sqrt{8\eps \log |\cX|  \cdot \frac{|\partial S|}{|S|}}.
\end{align*}
\end{proof}

Now we are in a position to prove Proposition~\ref{triviality}.
\begin{proof}[Proof of Proposition~\ref{triviality}]
Assume $(G,o)$ is almost surely anchored--amenable and tame. Let $(S_{k} = S_k(G,o) )_{k\ge 1}$ be the sequence of finite measurable random subsets of $V(G)$ satisfying the conditions of Definition~\ref{def:tame} (recall in particular that $o\in S_k$.)
  We use Lemma~\ref{lem:point_to_set} with this choice of sequence  $(S_k)_{k\ge 1}$, and then average over the realization of the rooted graph $(G,o)\sim \rho$:
\begin{align}\label{ineq:third}
&\E_\rho\left[\frac{1}{|S_k(G,o)|}\sum_{u \in  S_k(G,o)} \int_{0}^{\eps} \sum_{x \in \cX}  \E\Big[\Big(\P\big(\theta_u = x | \mathcal{T}^{(\eps')}_{\infty}\big) - \P\big(\theta_u = x | Y^{(\eps')}_{G}\big)\Big)^2\Big]  \rmd \eps'\right] \\
&\hspace{3cm}\le  \E_\rho\Big[\sqrt{8\eps \log |\cX|  \cdot |\partial S_k(G,o)| / |S_k(G,o)|}\Big]\nonumber\\
&\hspace{3cm}\le \sqrt{8\eps \log |\cX|}\, \Delta_k,\nonumber
\end{align}
where $\Delta_k\to 0$ by an application of dominated convergence (since $|\partial S_k(G,o)| / |S_k(G,o)|\to 0$ almost surely by assumption).
Now we let $f_k : \mathcal{G}_{**}\mapsto \R_+$ defined by
\[f_k(G,o,u) := \frac{1}{|S_k(G,o)|} \bfone_{u\in S_k(G,o)} \int_{0}^{\eps} \sum_{x \in \cX}  \E\Big[\Big(\P\big(\theta_u = x | \mathcal{T}^{(\eps')}_{\infty}\big) - \P\big(\theta_u = x | Y^{(\eps')}_{G}\big)\Big)^2\Big]  \rmd \eps'.\]
With this notation, expression~\eqref{ineq:third} is equal to $\E_{\rho}\big[\sum_{u \in V(G)}f_k(G,o,u)\big]$.
By unimodularity of $\rho$, this is also equal to  
\begin{align}\label{ineq:unimodularity}
&\E_{\rho}\Big[\sum_{u \in V(G)}f_k(G,u,o)\Big] \nonumber\\
&\qquad \qquad = \E_\rho\left[\Big(\sum_{u \in  V(G)}\frac{\bfone_{o\in S_k(G,u)}}{|S_k(G,u)|}\Big) \int_{0}^{\eps} \sum_{x \in \cX}  \E\Big[\Big(\P\big(\theta_o = x | \mathcal{T}^{(\eps')}_{\infty}\big) - \P\big(\theta_o = x | Y^{(\eps')}_{G}\big)\Big)^2\Big]  \rmd \eps' \right] \nonumber\\
&\qquad \qquad= \E_\rho\left[\alpha_k(G,o) \cdot \int_{0}^{\eps} \sum_{x \in \cX}  \E\Big[\Big(\P\big(\theta_o = x | \mathcal{T}^{(\eps')}_{\infty}\big) - \P\big(\theta_o = x | Y^{(\eps')}_{G}\big)\Big)^2\Big]  \rmd \eps'\right],
\end{align}
where $\alpha_k(G,o) = \sum_{u \in  V(G)}\bfone_{o \in S_k(G,u)} |S_k(G,u)|^{-1}$. Now we use the tameness assumption: the sequence $(1/\alpha_k(G,o))_k$ is tight. Let $Z(G,o)$ be the above integral over $\eps'$ (so that the above display is $\E_{\rho}[\alpha_k(G,o)\cdot Z(G,o)]$.) For $\eta>0$ let $\delta>0$ such that 
\[\limsup_{k \to \infty} \rho(\alpha_k(G,o)\le \delta) \le \eta.\]
Since all involved quantities are nonnegative, we have
\begin{align*}
\E_{\rho}\big[\alpha_k(G,o)\cdot Z(G,o)\big] &\ge \delta \E_{\rho}\big[\bfone_{\alpha_k(G,o)> \delta} \cdot Z(G,o)\big]\\
&= \delta \big( \E_{\rho}\big[Z(G,o)\big] - \E_{\rho}\big[\bfone_{\alpha_k(G,o)\le \delta} \cdot Z(G,o)\big]\big).
\end{align*}
Since $Z(G,o)\le \eps |\cX|$ a.s., we obtain
\begin{align*}
\E_{\rho}\big[Z(G,o)\big] &\le \delta^{-1} \E_{\rho}\big[\alpha_k(G,o) Z(G,o)\big]+ \eps |\cX| \cdot\rho(\alpha_k(G,o)\le \delta)  \\
&\le \delta^{-1} \sqrt{8\eps \log |\cX| }\, \Delta_k + \eps |\cX| \cdot\rho(\alpha_k(G,o)\le \delta),
\end{align*}
where we have used~\eqref{ineq:third} and~\eqref{ineq:unimodularity} to obtain the last display. Letting $k\to \infty$ and then $\eta \to 0$ we obtain for all $\eps \ge 0$,
\[\E_\rho\left[\int_{0}^{\eps} \sum_{x \in \cX}  \E\Big[\Big(\P\big(\theta_o = x | \mathcal{T}^{(\eps')}_{\infty}\big) - \P\big(\theta_o = x | Y^{(\eps')}_{G}\big)\Big)^2\Big]  \rmd \eps'\right] = 0,\]
and this concludes the proof.
\end{proof}

\section*{Acknowledgements}
This work was partially supported by grants NSF DMS-1613091, CCF-1714305, IIS-1741162, and ONR N00014-18-1-2729.

\bibliographystyle{amsalpha}
\bibliography{all-bibliography}
\addcontentsline{toc}{section}{References}

\appendix

\section{Amenable graphs: some omitted proofs}
\label{sec:omitted}
\subsection{Proof of Proposition~\ref{decoupling_bayes}}
The proof is based on a decoupling principle under $\eps$-perturbation of a general observation channel. This principle is given in Lemma 3.1 in~\cite{MontanariSparse}, which once specialized to our setting, takes the following form:
\begin{lemma}[Lemma 3.1~\cite{MontanariSparse}]\label{Lem:decoupling}
For all $\eps>0$, it holds that
\[\frac{1}{n}  \int_{0}^\eps \sum_{u,v \in V_n} I\Big(\theta_u;\theta_v \big| Y^{(\eps')}_{G_n}\Big) \rmd \eps' \le 2 \log |\cX|.\]
\end{lemma}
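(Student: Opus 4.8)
The plan is to reconstruct the standard differentiation--integration (``area theorem'') argument, exactly parallel to the proof of Lemma~\ref{mutual_info}, but now tracking the Shannon entropy of a \emph{single} spin rather than of a boundary set. First I would replace the scalar non-erasure probability $\eps$ by a vector $\underline{\eps} = (\eps_u)_{u\in V_n}$, revealing $\theta_u$ independently with probability $\eps_u$, and write $Y^{(\underline{\eps})}_{G_n} = (Y,\xi)$ with $Y$ the edge observations and $\xi = (\xi_u)_{u\in V_n}$ the vertex side information; as in the paper, $\xi^{\backslash(u)}$ denotes $\xi$ with its $u$-th coordinate deleted. Fix a vertex $v$. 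Conditioning on whether $\xi_u$ is an erasure (the erasure coin being independent of $(\btheta,Y)$) gives, for $u\neq v$,
\[
H\big(\theta_v \mid Y,\xi\big) \;=\; \eps_u\, H\big(\theta_v \mid Y,\xi^{\backslash(u)},\theta_u\big) \;+\; (1-\eps_u)\, H\big(\theta_v \mid Y,\xi^{\backslash(u)}\big),
\]
and since neither term on the right depends on $\eps_u$, differentiating yields $\partial_{\eps_u} H(\theta_v\mid Y,\xi) = -\,I\big(\theta_u;\theta_v \mid Y,\xi^{\backslash(u)}\big)$; the case $u=v$ similarly gives $\partial_{\eps_v} H(\theta_v\mid Y,\xi) = -\,H\big(\theta_v\mid Y,\xi^{\backslash(v)}\big)\le 0$.

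Next I would set $\eps_u\equiv\eps'$ for all $u$, sum over $u\in V_n$, and integrate $\eps'$ from $0$ to $\eps$. By the fundamental theorem of calculus the left-hand side telescopes to $H\big(\theta_v\mid Y^{(0)}_{G_n}\big) - H\big(\theta_v\mid Y^{(\eps)}_{G_n}\big)$, which is at most $H(\theta_v)=\log|\cX|$ since conditioning cannot increase entropy; dropping the nonnegative diagonal contribution leaves
\[
\int_0^\eps \sum_{u\in V_n\setminus\{v\}} I\big(\theta_u;\theta_v \mid Y^{(\eps')}_{G_n},\xi^{\backslash(u)}\big)\,\rmd\eps' \;\le\; \log|\cX|.
\]
To convert the conditioning on $(Y^{(\eps')}_{G_n},\xi^{\backslash(u)})$ --- which omits $\xi_u$ --- into conditioning on $Y^{(\eps')}_{G_n}$ itself, I would reuse the observation from the proof of Lemma~\ref{mutual_info}: the conditional mutual information vanishes on $\{\xi_u\neq\star\}$, so $I(\theta_u;\theta_v\mid Y^{(\eps')}_{G_n}) = (1-\eps')\,I(\theta_u;\theta_v\mid Y,\xi^{\backslash(u)}) \le I(\theta_u;\theta_v\mid Y,\xi^{\backslash(u)})$ for $u\neq v$. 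Summing over $v\in V_n$ and dividing by $n$ then bounds the off-diagonal part of the claim by $\log|\cX|$.

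Finally, the diagonal terms are bounded crudely: $I(\theta_v;\theta_v\mid Y^{(\eps')}_{G_n}) = H(\theta_v\mid Y^{(\eps')}_{G_n}) \le \log|\cX|$, hence $\tfrac1n\int_0^\eps\sum_{v\in V_n} I(\theta_v;\theta_v\mid Y^{(\eps')}_{G_n})\,\rmd\eps' \le \eps\log|\cX| \le \log|\cX|$, using that $\eps\le 1$. Adding the two contributions gives the stated bound $2\log|\cX|$. I do not anticipate a genuine obstacle: the argument is self-contained once the per-vertex parametrization is in place, and the only mildly delicate points --- that the two terms in the first display are constant in $\eps_u$, and the $\{\xi_u\neq\star\}$-vanishing used to restore the full conditioning --- are precisely those already dealt with in the proof of Lemma~\ref{mutual_info}.
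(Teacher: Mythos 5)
Your proof is correct and follows essentially the same differentiation--integration argument that the paper uses for its Lemma~\ref{mutual_info} (the paper does not reprove Lemma~\ref{Lem:decoupling} itself, deferring to the cited reference and noting the two proofs share the same structure). The per-vertex parametrization, the derivative identity $\partial_{\eps_u}H(\theta_v\mid Y,\xi)=-I(\theta_u;\theta_v\mid Y,\xi^{\backslash(u)})$, the restoration of the conditioning on $\xi_u$ via the vanishing on $\{\xi_u\neq\star\}$, and the crude bound $\eps\log|\cX|\le\log|\cX|$ for the diagonal terms all check out, and together they yield the stated constant $2\log|\cX|$.
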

This is very similar to our Lemma~\ref{mutual_info}. In fact the latter follows the same line of proof.  

Recall the definition of the decoupled estimator
\begin{align*}
 \widehat{X}^{(\sdec)}_{uv} &:= \E\Big[f(\hat{\theta}_u)\big| Y^{(\eps)}_{G_n}\Big]\cdot\E\Big[f(\hat{\theta}_v)\big|Y^{(\eps)}_{G_n}\Big]\\
 &= \Big(\sum_{x\in \cX} \mu_{u,G_n}(x)f(x)\Big) \cdot \Big(\sum_{x\in \cX} \mu_{v,G_n}(x)f(x)\Big),~~~ u, v \in V_n.
 \end{align*}
 For a pair of vertices $u,v \in V_n$ we let $\mu_{u,v,G_n}(x,x') := \P\big(\theta_u=x,\theta_v=x' \big|Y^{(\eps)}_{G_n}\big)$, for $x,x' \in \cX$.
Expanding the squares and cancelling equal terms we have
\[\risk_n(\widehat{\bX}^{(\sdec)};f) - \risk_n^{\textup{Bayes}}(f) = \frac{1}{n^2} \Big(\E \big\|\bX^{\textup{Bayes}}\big\|_F^2 - \E \big\|\hatbX^{(\sdec)}\big\|_F^2  \Big).\] 
Moreover, 
 \[\E \big\|\hatbX^{(\sdec)}\big\|_F^2 = \sum_{u,v \in V_n}  \E \left[\E\Big[f(\theta_u)\big|Y^{(\eps)}_{G_n}\Big]^2\cdot\E\Big[f(\theta_v)\big|Y^{(\eps)}_{G_n}\Big]^2\right],\]
 and
 \[\E \big\|\widehat{\bX}^{\textup{Bayes}}\big\|_F^2 = \sum_{u,v\in V_n} \E\left[\E\Big[f(\theta_u)f(\theta_v)\big|Y^{(\eps)}_{G_n}\Big]^2\right].\]
 Therefore,
\begin{align*}
&\frac{1}{n^2}\Big(\E \big\|\widehat{\bX}^{\textup{Bayes}}\big\|_F^2  - \E \big\|\hatbX^{(\sdec)}\big\|_F^2 \Big)\\
&\le  \frac{2 \|f\|_{\infty}^2}{n^2} \sum_{u, v \in V_n} \E\left[\left|\E\Big[f(\theta_u)f(\theta_v)\big|Y^{(\eps)}_{G_n}\Big]- \E\Big[f(\theta_u)\big|Y^{(\eps)}_{G_n}\Big]\E\Big[f(\theta_v)\big|Y^{(\eps)}_{G_n}\Big]\right|\right]\\
& \le  \frac{2 \|f\|_{\infty}^2}{n^2} \sum_{u,v \in V_n}  \sum_{x,x' \in \cX} |f(x)f(y)| \E \Big[\Big| \mu_{u,v,G_n}(x,x') -\mu_{u,G_n}(x)\mu_{v,G_n}(x')\Big|\Big]\\
&\le \frac{4\|f\|_{\infty}^4 }{n^2}  \sum_{u,v \in V_n} \E \big[d_{\sTV}( \mu_{u,v,G_n},\mu_{u,G_n}\times \mu_{v,G_n})\big]\\
&\le 4\|f\|_{\infty}^4 \sqrt{\frac{1}{2n^2}  \sum_{u,v \in V_n} I\Big(\theta_u;\theta_v \big| Y^{(\eps)}_{G_n}\Big)}.
\end{align*}
We used Pinsker's inequality and Jensen's inequality in the last line. We apply Lemma~\ref{Lem:decoupling} and Jensen's inequality and obtain for all $\eps>0$,
\[\int_0^\eps \Big\{\risk_n(\widehat{\bX}^{(\sdec)};f) - \risk_n^{\textup{Bayes}}(f)\Big\} \rmd \eps' \le 4\|f\|_{\infty}^4 \sqrt{\frac{\eps \log |\cX|}{n}}\longrightarrow 0.\]
Since the integrand is non-negative, it too converges to zero almost everywhere.

\subsection{Proof of Corollary~\ref{overlap_amenable}}
The proof follows from statement~\eqref{local} of Proposition~\ref{L2_statement} since
\begin{align*}
\E[\overlap(\hbtheta^{(l)},\btheta)] &\ge \frac{1}{|V_n|} \sum_{ u \in V_n}  \P(\hat{\theta}_u = \theta_u) \\
& = \frac{1}{|V_n|} \sum_{ u \in V_n} \sum_{x \in \cX} \E \big[ \bfone_ {\hat{\theta}_i=x}\bfone_ {\theta_i=x}\big]\\
& \stackrel{(a)}{=} \frac{1}{|V_n|} \sum_{ u \in V_n} \sum_{x \in \cX} \E \big[ \widehat{\mu}_{G_n,u,l}(x)\bfone_ {\theta_i=x}\big]\\
& = \frac{1}{|V_n|} \sum_{ u \in V_n} \sum_{x \in \cX} \E \big[ \widehat{\mu}_{G_n,u,l}(x)\mu_{G_n,u}(x)\big]\\
&\stackrel{(b)}{=} \frac{1}{|V_n|} \sum_{ u \in V_n} \sum_{x \in \cX} \E \big[\widehat{\mu}^2_{u,l,G_n}(x)\big].
\end{align*}
Here in $(a)$ we used the fact that, by construction $\htheta_u \sim \widehat{\mu}_{G_n,u,l}(\,\cdot\,)$ and in $(b)$ the remark, already made in the proof
of Theorem \ref{main}, that 
$\E \big[ \widehat{\mu}_{G_n,u,l}(x)\mu_{G_n,u}(x)\big] = \E \big[ \widehat{\mu}_{G_n,u,l}(x)^2\big]$.

\section{Information-theoretic reconstruction on random graphs: Technical proofs}
\label{sec:IT-Zq}

\subsection{Proof of Lemma \ref{lemma:TypicalNonEmpty}}
This is a consequence of McDiarmid's bounded differences inequality. For $(u,v)\in E$ and $(x_1,x_2,y_{12})\in \cX \times \cX \times \cY$, we let $X_{uv}(x_1,x_2,y_{12}) = \indi\{\theta_{0,u} = x_1, \theta_{0,v} = x_2, Y_{uv} = y_{12}\}$, and let $Z(x_1,x_2,y_{12})  = \frac{1}{|E|}\sum_{(u,v) \in E} X_{uv}(x_1,x_2,y_{12})  - \E[X_{uv}(x_1,x_2,y_{12}) ])$.
 Since 
\[d_{\sTV}(\hnu^{G_n}_{\btheta_0,\bY},\onu_{\se}) = \half \sum_{x_1,x_2, y_{12}} \big|Z(x_1,x_2,y_{12})\big|,\] 
we have
\begin{align*}
\P\big(d_{\sTV}(\hnu^{G_n}_{\btheta_0,\bY},\onu_{\se}) \ge \eta\big) &\le \sum_{x_1,x_2,y_{12}} \P\Big( \big|Z(x_1,x_2,y_{12})\big| \ge \frac{2\eta}{|\cX|^2|\cY|}\Big).
\end{align*}
We associate to each edge $(i,j)\in E$ an independent random variable $U_{ij}\sim\Unif([0,1])$. We can then construct a function $f:\cX\times\cX\times [0,1]\to \cY$,
such that $Q(y_{12}|\theta_1,\theta_2) =\prob(f(\theta_1,\theta_2,U_{12})= y_{12}|\theta_1,\theta_2)$. Hence we can define $\bY,\btheta_0$ by letting
$Y_{uv} = f(\theta_{0,u},\theta_{0,v},U_{uv})$ for each $(u,v)\in E$, and we view $Z(x_1,x_2,y_{12})$ as a function of the independent random variables $\{\theta_{0,u},U_{uv}\}$.

Moreover, if we change the value $\theta_{0,u}$ at vertex $u$ to $\theta'_{0,u}$ and call $Z'(x_1,x_2,y_{12})$ the resulting value of $Z(x_1,x_2,y_{12})$, we have $|Z-Z'| \le \frac{k}{|E|} = \frac{2}{n}$ (recall that $k$ is the degree of $u$ and $|E| = \frac{nk}{2}$). If we further change $U_{uv}$ to $U'_{uv}$ at an edge $(uv)\in E$,  we have $|Z-Z'| \le 1/|E|$.
The bounded differences inequality then implies
\[\P\Big( \big|Z(x_1,x_2,y_{12})\big| \ge\eta' \Big) \le 2 \exp\Big\{ - \frac{2\eta'^2}{ (n (\frac{2}{n})^2+|E| (\frac{1}{|E|})^2 ) } \Big\} 
\le 2 e^{-n\frac{\eta'^2}{3}}.\]  
Now we let $\eta' = 2\frac{\eta}{|\cX|^2|\cY|}$ and $\eta = \frac{(\log n)}{\sqrt{n}}$.

\subsection{Proof of Theorem~\ref{exhaustive_general}: A truncated first moment method}
\label{sec:ProofExhaustiveGeneral}

Instead of working directly with the ensemble of random regular graphs, we will use the \emph{configuration model}~\cite{bollobas1980probabilistic} for our moment computations. Let $kn$ be even and let $\mathcal{M}_{nk}$ be the set of perfect matchings on $nk$ vertices. For $\mathfrak{m} \in \mathcal{M}_{nk}$ we define the multi-graph $G(\mathfrak{m})$ on $n$ vertices where a vertex $i' \in [nk]$ in $\mathfrak{m}$ is sent to a vertex $i$ in $G(\mathfrak{m})$ through the mapping $i' \mapsto i = i' ~(mod)~ n$. The resulting multi-graph may contain multiple edges and self-loops. The configuration model is the probability measure $\P^{\scm}_{n,k}$ on multi-graphs induced by the uniform measure on perfect matchings through the above mapping.      
The measure $\P^{\scm}_{n,k}$ conditioned on the multi-graph $G(\mathfrak{m})$ being \emph{simple} (i.e., not having self-loops nor multiple edges) is the uniform measure on $k$-regular graphs $\P^{\sreg}_{n,k}$. The probability that $G(\mathfrak{m})$ is simple under $\P^{\scm}_{n,k}$ is $(1-\bigo(k^3/n))e^{(1-k^2)/4}$ for large $n$ by a formula of McKay and Wormald~\cite{wormald1999models}. Therefore, for any event $A$ and sequence $\eps_n \to 0$, $\P^{\scm}_{n,k}(A)\ge 1-\eps_n$ implies $\P^{\sreg}_{n,k}(A)\ge 1 - c(k)\eps_n$ with $c(k)>0$ depending only on $k$. 


Let $G_n = (V_n,E_n)$ be from the configuration model with $V_n=[n]$.  
We will assume edges to be  directed, and the direction to be chosen uniformly at random. The number of such graphs is
\begin{align}
N_{n,k} = \frac{(nk)!}{(nk/2)!} = \exp\left\{\frac{nk}{2}\log \Big(\frac{2nk}{e}\Big) + \bigo(1)\right\}\, .\label{eq:GraphNumber}
\end{align}
Indeed, $N_{n,k}$ is the number of ordered pairings of the $nk$ half-edges. Such a pairing can be constructed by ordering the $nk$ half-edges
(which can be done in $(nk)!$ possible ways), and then pairing consecutive half edges following this ordering.  Each pairing can arise in $(nk/2)!$ possible ways.

 We next state a standard counting lemma that will be useful in what follows.
Given finite alphabets $\ocX,\ocY$, and integers $n,k$ with $nk = 2m$ even, let $\cuP_k(\ocX\times\ocX\times\ocY)
\subseteq \cuP(\ocX\times\ocX\times\ocY)$ be the subset of probability distributions $\nu\in \cuP(\ocX\times\ocX\times\ocY)$ 
such that $\nu(x_1,x_2,y)\in\naturals/m$ for all $x_1,x_2\in\ocX$, $y\in\ocY$, and $\sum_{\tx,y}(\nu(x,\tx,y)+\nu(\tx,x,y)) \in\naturals/n$ for all $x\in\ocX$.

Given $\nu \in \cuP(\ocX\times\ocX\times\ocY)$, we let $\pi_1\nu(x) \equiv \sum_{\tx\in\ocX,y\in \cY} \nu(x,\tx,y)$,  $\pi_2\nu(x) \equiv \sum_{\tx\in\ocX,y\in \cY} \nu(\tx,x,y)$.
We further let $\pi_{12}\nu(x_1,x_2) = \sum_{y\in\ocY}\nu(x_1,x_2,y)$. 

Recall that Shannon entropy of a probability distribution $p$ on the finite set $\cX$ is $H(p) = -\sum_{x\in S}p(x)\log p(x)$, and the joint empirical edge distribution of $(\btheta, \bY)$ on a graph $G$ is
\[\hnu^G_{\btheta,\bY} = \frac{1}{|E|}\sum_{(u,v)\in E}\delta_{(\theta_{u},\theta_{v},Y_{uv})} \in\cuP(\cX\times\cX \times \cY)\, .\]

\begin{lemma}\label{lemma:CountingBasic}
For such $\nu$, let $N_{n,k}(\nu)$ be the number of triples $(G,\btheta,\bY)$ where $G=(V=[n],E)$ is a graph from the configuration model, $\btheta\in \ocX^V$,
$\bY\in\ocY^E$, with edge empirical distribution equal to $\nu$.  Let $\nu_v\equiv (\pi_1\nu+\pi_2\nu)/2$.
Then 
\begin{align*}
N_{n,k}(\nu) &\le \exp\left\{n A(\nu) + \frac{nk}{2}\log  \Big(\frac{2nk}{e}\Big)\right\}\, ,\label{eq:BoundEdgeType}\\
A(\nu)& \equiv \frac{k}{2}\,H(\nu)-(k-1)H(\nu_v)\, .
\end{align*}
\end{lemma}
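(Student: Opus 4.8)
The plan is to compute $N_{n,k}(\nu)$ \emph{exactly} as a ratio of factorials and then estimate it with Stirling's formula. The first step is to peel off the choice of the vertex labels $\btheta$. Since every vertex carries $k$ half-edges and each directed edge consumes one half-edge at its tail and one at its head, the number of half-edges incident to vertices of label $x$ equals $m\,(\pi_1\nu+\pi_2\nu)(x) = 2m\,\nu_v(x) = nk\,\nu_v(x)$, where $m = nk/2$; consequently any $\btheta$ compatible with edge type $\nu$ must have exactly $n\nu_v(x)$ vertices of each label $x$ (if $n\nu_v(x)\notin\naturals$ then $N_{n,k}(\nu)=0$ and there is nothing to prove). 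The number of such $\btheta$ is the multinomial coefficient $n!/\prod_x (n\nu_v(x))!$, which is bounded by $e^{nH(\nu_v)}$, and the count of $(G,\bY)$ below will depend only on the label profile of $\btheta$, not on $\btheta$ itself.

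With $\btheta$ fixed, I would count the pairs $(G,\bY)$ --- a directed pairing of the $nk$ labelled half-edges together with an observation on each of the $m$ edges --- whose empirical triple distribution is $\nu$, in two stages. \emph{Stage one:} count the directed pairings realizing the prescribed directed label pattern $e(x_1,x_2) := m\,\pi_{12}\nu(x_1,x_2)$ (this pattern is forced by $\nu$). Partition the $nk\nu_v(x)$ half-edges at label-$x$ vertices into blocks ``tail destined for label $x_2$'' of size $e(x,x_2)$ and ``head receiving from label $x_2$'' of size $e(x_2,x)$ --- a product over $x$ of multinomial coefficients --- and then, for each ordered pair $(x_1,x_2)$, biject the $e(x_1,x_2)$ out-slots with the $e(x_1,x_2)$ in-slots. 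One checks that this data (block partition at each label $+$ out/in matching) is in bijection with the directed pairing, so there is no over- or under-counting; since each $e(x_1,x_2)!$ occurs twice in the denominators of the multinomials and once in the matching count, the total simplifies to $\prod_x (nk\nu_v(x))!\big/\prod_{x_1,x_2}(m\,\pi_{12}\nu(x_1,x_2))!$. \emph{Stage two:} refine each label pattern to the full type by distributing observations among the $e(x_1,x_2)$ edges of pattern $(x_1,x_2)$, contributing $\prod_{x_1,x_2}(m\,\pi_{12}\nu(x_1,x_2))!\big/\prod_y (m\nu(x_1,x_2,y))!$. Multiplying the two stages, the $\prod_{x_1,x_2}(m\,\pi_{12}\nu(x_1,x_2))!$ factors cancel, and combining with the count of $\btheta$ gives the exact identity
\[
 N_{n,k}(\nu) \;=\; \frac{n!}{\prod_x (n\nu_v(x))!}\cdot\frac{\prod_x (nk\,\nu_v(x))!}{\prod_{x_1,x_2,y}(m\,\nu(x_1,x_2,y))!}\,.
\]

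Finally I would apply Stirling's formula $\log M! = M\log M - M + \bigo(\log M)$ together with $\sum_x \nu_v(x) = \sum_{x_1,x_2,y}\nu(x_1,x_2,y) = 1$. The $n\log n$ contributions from $n!$ and from $\prod_x (n\nu_v(x))!$ cancel; the surviving entropy terms reorganize as $-n(k-1)H(\nu_v) + mH(\nu) = n\big(\tfrac{k}{2}H(\nu)-(k-1)H(\nu_v)\big) = nA(\nu)$, and the surviving ``$M\log M - M$'' bulk terms give $nk\log(nk) - nk - m\log m + m = \tfrac{nk}{2}\log\tfrac{2nk}{e}$, which are exactly the two exponents in the claimed bound (the remaining corrections are of order $\log n$ in the exponent, i.e.\ a factor $n^{\bigo(1)}$, which is harmless for the first-moment application in Section~\ref{sec:ProofExhaustiveGeneral}; alternatively one avoids them by combining the clean inequalities $\binom{N}{(N_i)}\le e^{NH}$ and $\binom{N}{(N_i)}\ge (N+1)^{-|\cX|}e^{NH}$ with $N_{n,k}=(nk)!/(nk/2)!$ to get $N_{n,k}(\nu)\le (nk+1)^{|\cX|}\,N_{n,k}\,e^{nA(\nu)}$). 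The only genuinely delicate point is Stage one: getting the count of directed configuration-model pairings with a prescribed directed label pattern exactly right, i.e.\ verifying the claimed bijection so that the $e(x_1,x_2)!$ factors cancel correctly; everything else is bookkeeping and Stirling.
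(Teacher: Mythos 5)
Your proposal is correct and follows essentially the same route as the paper: an exact count of $N_{n,k}(\nu)$ as a product of multinomial factors (choice of $\btheta$, assignment of matching types to half-edges, the half-edge matching itself, and the $\ocY$-labels), followed by entropy/Stirling bounds; your simplified exact formula is precisely what the paper's formula reduces to after the $(m\,\pi_{12}\nu(x_1,x_2))!$ factors cancel. Your treatment is in fact a bit more explicit about the bijection behind the matching count and about how the polynomial-in-$n$ corrections are absorbed, which is exactly the level of care needed for the first-moment application.
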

\begin{proof}
 Recall that $m=nk/2$ is the number of edges in $G$.
Note that $m\pi_1\nu(x)$ is the number of edges $(u,v)$ such that $\theta_u=x$, and $m\pi_2\nu(x)$ is the number of edges $(u,v)$ such that $\theta_v=x$.
Therefore $m(\pi_1\nu(x)+\pi_2\nu(x))/k = n(\pi_1\nu(x)+\pi_2\nu(x))/2$ is the number of vertices $u$ such that $\theta_u=x$.
Further $m \pi_{12}\nu(x_1,x_2)$ is the number of edges $(u,v)$ such that $\theta_u=x_1$ and
$\theta_v=x_2$.

Given a non-negative integer vector $(b(x))_{x\in S}$ with  $b_{\ssum}\equiv\sum_{x\in S}b(x)$, we denote the corresponding multinomial coefficient by
\begin{align*}
\binom{b_{\ssum}}{b(\,\cdot\,)}\equiv \frac{b_{\ssum}!}{\prod_{x\in S} b(x)}\, .
\end{align*}
We then obtain the following exact counting formula (where $\nu_v(x) \equiv (\pi_1\nu(x)+\pi_2\nu(x))/2$ and $\nu_{12} = \pi_{12}\nu$):
\begin{align}
N_{n,k}(\nu) =& \binom{n}{n\nu_v(\,\cdot\,)}\prod_{x\in\ocX}\frac{[nk\nu_v(x)]!}{\prod_{\tx\in\ocX}[nk\nu_{12}(x,\tx)]! \prod_{\tx\in\ocX}[nk\nu_{12}(\tx,x)]!} 
\prod_{x_1,x_2\in \ocX} \binom{nk\nu(x_1,x_2)/2}{nk\nu(x_1,x_2,\,\cdot\,)/2} \, .\nonumber
\end{align}
The first factor account for the number of ways of choosing $\btheta$. The second corresponds to the ways of giving a matching type to half-edfes.
The third factor counts the number of ways of matching half-edges, and the last one the number of ways of assigning labels in $\ocY$ to edges.

Equation (\ref{eq:BoundEdgeType}) follows by using the following elementary bounds (that hold for any $N\in \naturals$ and any $p\in\cuP(S)$):
\begin{align}
N!\le \Big(\frac{N}{e}\Big)^N\, ,\;\;\;\;\;\; \binom{N}{Np(\,\cdot\,)} \le e^{N\, H(p)}\,.
\end{align}
\end{proof}

Now recall the joint empirical distribution of two assignments $\btheta_0,\btheta\in \cX^V$: 
\[ \homega_{\btheta_0,\btheta} = \frac{1}{|V|}\sum_{u\in V}\delta_{\theta_{0,u},\theta_u} \in \cuP(\cX\times\cX).\]
Further, let $\onu_{\se} (x_1,x_2,y) = \onu(x_1)\, \onu(x_2)\, Q(y|x_1,x_2)$, $\onu$ being the uniform distribution on $\cX$, and
\begin{align*}
\Theta(\eta;G,\bY) = \Big\{\btheta\in \cX^V:\;\; d_{\sTV}(\hnu^G_{\btheta,\bY},\onu_{\se})\le \eta\Big\}\, .
\end{align*}

Given a graph $G$, a true assignment $\btheta_0$,  observations $\bY$, and a closed set $\cS\subseteq \cuP(\cX\times\cX)$ we define
\begin{align}
Z(\cS;G,\btheta_0,\bY)=\left|\Big\{\btheta\in \Theta(\eta_n;G,\bY):\; \homega_{\btheta_0,\btheta}\in \cS\Big\}\right|\, ,
\end{align}
where $\eta_n = (\log n) /\sqrt{n}$.
We denote by $\cG_n$ the set of instances, i.e., triples $(G_n,\btheta_0,\bY)$, where $G_n$ is a graph over $n$ vertices,
$\btheta_0\in \cX^{V_n}$ and $\bY\in \cY^{E_n}$.
\begin{lemma}\label{lemma:Moment}
Assume there exists $c_M>0$ such that $c_M^{-1}\le Q(y|x_1,x_2)\le c_M$ for all $x_1,x_2\in\cX$, $y\in\cY$.
Define the map $S:\cuP(\cX^2\times\cX^2\times\cY)\mapsto \reals$ by 
\begin{align}
S(\Omega)& \equiv \frac{k}{2}H(\Omega)-(k-1)H\big((\pi_1\Omega+\pi_2\Omega)/2\big)-\frac{k}{2}H(\onu_{\se}) +(k-1)H(\onu)\, .
\end{align} 
(Here $\pi_1$, $\pi_2$ are defined as in Lemma \ref{lemma:CountingBasic}, with $\ocX=\cX^2$, and $H$ denotes the Shannon entropy.)
Further define $S_*:\cuP(\cX\times\cX)\mapsto \reals$ by
\begin{align}
S_*(\omega) \equiv  & \max \;\;\;\;\;\;\; S(\Omega)\, ,\label{eq:S-star}\\
& ~ \mbox{\rm subj. to }\;\;\; (\pi_1\Omega+\pi_2\Omega)/2 = \omega\, ,\nonumber\\
& \phantom{\mbox{subj. to }}\;\;  \sum_{\tx_1,\tx_2\in \cX} \Omega(x_1,\tx_1,x_2,\tx_2,y) = \onu_{\se}(x_1,x_2,y)\, ,\nonumber\\
& \phantom{\mbox{subj. to }}\;\;  \sum_{x_1,x_2\in \cX} \Omega(x_1,\tx_1,x_2,\tx_2,y) = \onu_{\se}(\tx_1,\tx_2,y)\, .\nonumber
\end{align}
There is a set $\cG_n^*\subseteq \cG_n$ of `good' instances such that the following happens.
For $\cS\subseteq \cuP(\cX\times\cX)$ a closed set, we have
\begin{align}
\prob\big((G_n,\btheta_0,\bY)\in \cG_n^*\big) & \ge 1-c_0^{-1}\exp\left\{-c_0(\log n)^2 \right\}\, ,\label{eq:ProbGoodSet}\\
\E\left[Z(\cS;G_n,\btheta_0,\bY)\,\bfone_{(G_n,\btheta_0,\bY)\in \cG_n^*}\right]& \le \exp\Big\{n \sup_{\omega\in \cS} S_*(\omega) +C\sqrt{n}\log n \Big\}.\nonumber
\end{align}
\end{lemma}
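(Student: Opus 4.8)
The plan is a truncated first-moment computation carried out inside the configuration model, arranged so that the dominant exponential factors coming from the number of graphs, Eq.~\eqref{eq:GraphNumber}, cancel against the leading factors in the edge-type count of Lemma~\ref{lemma:CountingBasic}. First I would take $\cG_n^*$ to be the set of instances $(G_n,\btheta_0,\bY)$ for which $\btheta_0\in\Theta(\eta_n;G_n,\bY)$, i.e.\ for which the empirical edge distribution $\hnu^{G_n}_{\btheta_0,\bY}$ lies within total variation $\eta_n$ of $\onu_{\se}$. Then the estimate~\eqref{eq:ProbGoodSet} is precisely the content of Lemma~\ref{lemma:TypicalNonEmpty}: the same bounded-differences argument applies verbatim in the configuration model (alternatively one transfers it through the simple-graph event), up to adjusting $c_0$. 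On $\cG_n^*$ the endpoint marginals $\pi_1\hnu^{G_n}_{\btheta_0,\bY}$, $\pi_2\hnu^{G_n}_{\btheta_0,\bY}$, and hence the vertex-empirical distribution of $\btheta_0$, are automatically within $\bigo(\eta_n)$ of the uniform law $\onu$, since $\pi_1\onu_{\se}=\pi_2\onu_{\se}=\onu$.

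Next I would write $\E[Z(\cS;G_n,\btheta_0,\bY)\,\bfone_{\cG_n^*}]=\sum_{\btheta\in\cX^{V_n}}\E[\bfone\{\btheta\in\Theta(\eta_n;G_n,\bY)\}\,\bfone\{\homega_{\btheta_0,\btheta}\in\cS\}\,\bfone_{\cG_n^*}]$ and group the sum over quintuples $(G_n,\btheta_0,\btheta,\bY)$ according to the joint empirical edge type $\Omega\in\cuP_k(\cX^2\times\cX^2\times\cY)$ of $(\theta_{0,u},\theta_u,\theta_{0,v},\theta_v,Y_{uv})$ over the directed edges. Under the three indicators only \emph{feasible} types contribute: the $(\btheta,\bY)$-marginal and the $(\btheta_0,\bY)$-marginal of $\Omega$ are each within $\eta_n$ of $\onu_{\se}$ (from $\btheta\in\Theta(\eta_n)$ and from $\cG_n^*$ respectively), and $(\pi_1\Omega+\pi_2\Omega)/2\in\cS$ up to $\bigo(1/n)$, because on a $k$-regular graph the vertex-empirical distribution of $(\btheta_0,\btheta)$ — which is precisely $\homega_{\btheta_0,\btheta}$ — equals the symmetrized edge type $(\pi_1\Omega+\pi_2\Omega)/2$. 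If no $\omega$ in an $\bigo(1/n)$-neighbourhood of $\cS$ admits a feasible $\Omega$ in~\eqref{eq:S-star}, then $Z=0$ on $\cG_n^*$ and the bound is vacuous.

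For a fixed feasible $\Omega$, Lemma~\ref{lemma:CountingBasic} with $\ocX=\cX^2$, $\ocY=\cY$ bounds the number of quintuples of that type by $\exp\{nA(\Omega)+\tfrac{nk}{2}\log(2nk/e)\}$ with $A(\Omega)=\tfrac k2 H(\Omega)-(k-1)H((\pi_1\Omega+\pi_2\Omega)/2)$, while each such quintuple, viewed as an instance, has probability $N_{n,k}^{-1}|\cX|^{-n}\prod_{(u,v)}Q(Y_{uv}|\theta_{0,u},\theta_{0,v})$. By~\eqref{eq:GraphNumber}, $N_{n,k}^{-1}=\exp\{-\tfrac{nk}{2}\log(2nk/e)+\bigo(1)\}$, which cancels the graph-count factor; $|\cX|^{-n}=e^{-nH(\onu)}$; and $\prod_{(u,v)}Q=\exp\{\tfrac{nk}{2}\sum\nu_0\log Q\}$, where $\nu_0$ is the $(\btheta_0,\bY)$-edge marginal of $\Omega$. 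A short computation gives $\sum\onu_{\se}\log Q=2H(\onu)-H(\onu_{\se})$, and the bounds $\|\nu_0-\onu_{\se}\|_1\le2\eta_n$ and $c_M^{-1}\le Q\le c_M$ let me replace $\nu_0$ by $\onu_{\se}$ at exponential cost $\tfrac{nk}{2}\cdot 2\eta_n\log c_M=\bigo(\sqrt n\log n)$. Collecting terms, each feasible type contributes at most $\exp\{nS(\Omega)+\bigo(\sqrt n\log n)\}$, and since there are only $(nk/2+1)^{|\cX|^4|\cY|}=e^{\bigo(\log n)}$ types, summation yields $\E[Z\,\bfone_{\cG_n^*}]\le\exp\{n\sup_{\Omega\text{ feasible}}S(\Omega)+\bigo(\sqrt n\log n)\}$.

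It then remains to pass from the $\eta_n$-relaxed feasibility constraints to the exact equality constraints defining $S_*$ in~\eqref{eq:S-star}, i.e.\ to show $\sup_{\Omega\text{ feasible}}S(\Omega)\le\sup_{\omega\in\cS}S_*(\omega)+\bigo(\eta_n\log(1/\eta_n))$. I would do this by projecting any $\eta_n$-feasible $\Omega$ onto the exact constraint surface at $\ell_1$-cost $\bigo(\eta_n)$ and invoking the modulus of continuity of Shannon entropy, $|H(p)-H(q)|\le\|p-q\|_1\log(|\cX|^4|\cY|)+h(\|p-q\|_1)$ with $h$ the binary entropy, together with the continuity of $\omega\mapsto S_*(\omega)$ on the compact set where it is finite; multiplying by $n$ and absorbing everything into $C$ then gives the claim. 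This last step is the main obstacle: it requires a careful quantitative continuity/perturbation statement for $S$ and $S_*$ on the relevant face of the simplex, and one must rule out any spurious contribution from $\omega$ at which the polytope in~\eqref{eq:S-star} is empty (so $S_*(\omega)=-\infty$), ensuring the projection never manufactures such a contribution. Everything else — the edge-type bookkeeping, the exact cancellation of the $\tfrac{nk}{2}\log(2nk/e)$ factors, the identity $\sum\onu_{\se}\log Q=2H(\onu)-H(\onu_{\se})$, and the polynomial type count — is routine.
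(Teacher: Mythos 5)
Your proposal is correct and follows essentially the same route as the paper: the same choice of $\cG_n^*$ (so that Eq.~\eqref{eq:ProbGoodSet} is Lemma~\ref{lemma:TypicalNonEmpty}), the same grouping by the joint empirical edge type $\hOmega^{G}_{\btheta_0,\btheta,\bY}$, the same application of Lemma~\ref{lemma:CountingBasic} with $\ocX=\cX^2$, the same cancellation of the $\tfrac{nk}{2}\log(2nk/e)$ factors against $N_{n,k}^{-1}$, the same replacement of the likelihood exponent by $\tfrac{nk}{2}(2H(\onu)-H(\onu_{\se}))$ at cost $\bigo(n\eta_n)$, and the same polynomial count of types. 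The one step you single out as the main obstacle --- passing from the $\eta_n$-relaxed marginal constraints to the exact constraints defining $S_*$ --- is indeed the step the paper's proof treats implicitly (it is absorbed into the final inequality and a ``continuity argument'' invoked later in Theorem~\ref{thm:GeneralMoment}), and your projection-plus-entropy-continuity plan is a reasonable way to make it explicit.
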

\begin{proof}
Given a tuple $(G,\btheta_0,\btheta,\bY)$, where $G=(V,E)$ is a graph, $\btheta,\btheta_0\in\cX^V$, $\bY\in\cY^E$, we define its joint edge empirical distribution  $\hOmega^{G}_{\btheta_0,\btheta,\bY} \in\cuP(\cX\times\cX\times\cX\times \cX\times \cY)$ as
\begin{equation}\label{hat_omega}
\hOmega^{G}_{\btheta_0,\btheta,\bY} = \frac{1}{|E|} \sum_{(u,v)\in E}\delta_{\theta_{0,u},\theta_u,\theta_{0,v},\theta_v,Y_{uv}}\, .
\end{equation}
In other words $\hOmega^{G}_{\btheta_0,\btheta,\bY}(x_1,\tx_1,x_2,\tx_2,y_{12})$ is the probability that, sampling an edge $(u,v)\in E$
uniformly at random, we have $\theta_{0,u}=x_1$, $\theta_{u}=\tx_1$, $\theta_{0,v}=x_2$, $\theta_{v}=\tx_2$, $Y_{uv}= y_{12}$.
Let $\cuP_{nk}(\cX^4\times\cY)\subseteq \cuP(\cX^4\times\cY)$ be the subset of probability distributions with entries that are integer multiples of $1/|E|= 2/(nk)$.
For $\Omega \in \cuP_{nk}(\cX^4\times\cY)$, we let $N_{n,k}(\Omega)$ denote the number of tuples with  edge empirical distribution equal to $\Omega$:
\begin{align}
N_{n,k}(\Omega) = \Big|\Big\{(G,\btheta_0,\btheta,\bY):\; \;  \hOmega^{G}_{\btheta_0,\btheta,\bY}=\Omega\Big\}\Big|\, .
\end{align}
Notice that setting $\ocX=\cX\times\cX$, we can view $(\btheta_0,\btheta)$ as a vector in $\ocX^V$ and $\Omega$ 
as a probability distribution in $\cuP(\ocX\times\ocX\times\cY)$. 
Applying Eq.~\eqref{eq:GraphNumber} and Lemma~\ref{lemma:CountingBasic}, we get
\begin{align}
\frac{N_{n,k}(\Omega)}{N_{n,k}} & \le C\, e^{nA(\Omega)}. \label{eq:BasicBoundN}\, 
\end{align}

We define
\begin{align*}
\cG_n^*\equiv \Big\{(G_n,\btheta_0,\bY)\in\cG_n:\;\; d_{\sTV}(\hnu^G_{\btheta_0,\bY},\onu_{\se})\le \eta_n\Big\}\, .
\end{align*}
Then Eq.~\eqref{eq:ProbGoodSet} follows immediately from Lemma~\ref{lemma:TypicalNonEmpty}.

We also define $B_{\sTV}(\onu_{\se};\eta_n)\equiv \{\nu\in \cuP(\cX\times\cX\times \cY):\;\; d_{\sTV}(\nu,\onu_{\se})\le \eta_n\}$.
With this notation
\begin{align*}
Z(\cS;G,\btheta_0,\bY) = \sum_{\btheta\in\cX^V}\bfone_{\hnu^G_{\btheta,\bY}\in B_{\sTV}(\onu_{\se};\eta_n)}\, 
\bfone_{\homega_{\btheta_0,\btheta}\in\cS}\, ,
\end{align*}
and therefore, using Eq.~\eqref{eq:BasicBoundN},
\begin{align}
\label{eq:first_moment}
\E&\left[Z(\cS;G_n,\btheta_0,\bY)\,\bfone_{(G_n,\btheta_0,\bY)\in \cG_n^*}\right]  \\
 & =\frac{1}{N_{n,k}|\cX|^n} \sum_G\sum_{\bY\in\cY^E}\sum_{\btheta_0,\btheta\in\cX^V}\prod_{(u,v)\in E}Q(Y_{uv}|\theta_{0u},\theta_{0v})
\bfone_{\hnu^G_{\btheta_0,\bY}\in B_{\sTV}(\onu_{\se};\eta_n)}\bfone_{\hnu^G_{\btheta,\bY}\in B_{\sTV}(\onu_{\se};\eta_n)}\, 
\bfone_{\homega_{\btheta_0,\btheta}\in\cS}.\nonumber
\end{align}
Recall the definition of $\hOmega^{G}_{\btheta_0,\btheta,\bY}$ from Eq.~\eqref{hat_omega}. We observe that this empirical measure has the following marginals: 
\begin{align*}
\frac{1}{2}(\pi_1\hOmega^{G}_{\btheta_0,\btheta,\bY}+\pi_2\hOmega^{G}_{\btheta_0,\btheta,\bY}) &= \homega_{\btheta_0,\btheta},\\ \sum_{\tx_1,\tx_2\in\cX}\hOmega^{G}_{\btheta_0,\btheta,\bY}(x_1,\tx_1,x_2,\tx_2,y) &= \hnu^G_{\btheta_0,\bY}(x_1,x_2,y),\\ 
\mbox{and} 
\sum_{x_1,x_2\in\cX} \hOmega^{G}_{\btheta_0,\btheta,\bY}(x_1,\tx_1,x_2,\tx_2,y) &= \hnu^G_{\btheta,\bY}(\tx_1,\tx_2,y).
\end{align*}
Moreover, if $Q$ does not vanish, we have
\begin{align*}
\prod_{(u,v)\in E}Q(Y_{uv}|\theta_{0u},\theta_{0v}) &= \exp \Big\{\sum_{(u,v)\in E}\log Q(Y_{uv}|\theta_{0u},\theta_{0v})\Big\} \\
&= \exp\Big\{|E| \int \log Q(y|x_1,x_2) \rmd \hnu^G_{\btheta_0,\bY} (x_1,x_2,y)\Big\}\\
&=: F\big(\hOmega^{G}_{\btheta_0,\btheta,\bY}\big).
\end{align*}
Therefore the summand in the formula~\eqref{eq:first_moment} depends only in the empirical edge distribution $\hOmega^{G}_{\btheta_0,\btheta,\bY}$ of the instance $(G,\btheta_0,\btheta,\bY)$. Now let $\cuQ(\eta_n) \subseteq \cuP(\cX^4\times \cY)$ be the set of $\Omega\in \cuP(\cX^4\times \cY)$ satisfying the constraints
\begin{align}
\label{eq:ConstraintGood}
\begin{cases}
&\frac{1}{2}(\pi_1\Omega+\pi_2\Omega) \in \cS\, ,\\
&\Big(\sum_{\tx_1,\tx_2\in\cX}\Omega(x_1,\tx_1,x_2,\tx_2,y)\Big)_{x_1,x_2,y} \in B_{\sTV}(\onu_{\se};\eta_n)\, ,\\
&\Big(\sum_{x_1,x_2\in\cX}\Omega(x_1,\tx_1,x_2,\tx_2,y)\Big)_{\tx_1,\tx_2,y} \in B_{\sTV}(\onu_{\se};\eta_n)\, .
\end{cases}
\end{align}  
We have
\begin{align*}
\E&\left[Z(\cS;G_n,\btheta_0,\bY)\,\bfone_{(G_n,\btheta_0,\bY)\in \cG_n^*}\right]  \\
&\hspace{3cm}= \frac{1}{N_{n,k}|\cX|^n} \sum_{\Omega\in \cuQ(\eta_n)\cap \cuP_{n,k}(\cX^4\times \cY)} F(\Omega) \sum_{(G,\btheta_0,\btheta,\bY)\in \cG_n} \bfone\big\{\hOmega^{G}_{\btheta_0,\btheta,\bY} = \Omega\big\}\\
&\hspace{3cm} = \frac{1}{N_{n,k}|\cX|^n} \sum_{\Omega\in \cuQ(\eta_n)\cap \cuP_{n,k}(\cX^4\times \cY)} F(\Omega) \, N_{n,k}(\Omega)\\
&\hspace{3cm} \le \frac{C}{|\cX|^n} \sum_{\Omega\in \cuQ(\eta_n)\cap \cuP_{n,k}(\cX^4\times \cY)} F(\Omega) \, e^{nA(\Omega)}\, .
\end{align*}
We applied Lemma~\ref{lemma:CountingBasic} in the last line above.
Due to the second constraint in~\eqref{eq:ConstraintGood}, we can upper bound $F(\Omega)$ as follows 
\begin{align*}
F(\Omega) &\le  \exp\Big\{\frac{nk}{2} \int \log Q(y|x_1,x_2) \rmd \onu_{\se} + Cn\eta_n\Big\}\\
&=\exp\Big\{\frac{nk}{2} \big(-H(\onu_{\se}) + 2H(\onu)\big) + Cn\eta_n\Big\}.
\end{align*}
Therefore, letting 
\begin{align*}
S(\Omega) &= A(\Omega) - \frac{k}{2}H(\onu_{\se}) + (k-1)H(\onu)\\
&= \frac{k}{2}H(\Omega) - (k-1)H\big((\pi_1\Omega+\pi_2\Omega)/2\big)- \frac{k}{2}H(\onu_{\se})+ (k-1)H(\onu),
\end{align*}  
we arrive at 
\begin{align*}
\E&\left[Z(\cS;G_n,\btheta_0,\bY)\,\bfone_{(G_n,\btheta_0,\bY)\in \cG_n^*}\right] \le C \sum_{\Omega\in \cuQ(\eta_n)\cap \cuP_{n,k}(\cX^4\times \cY)} 
\exp\big\{nS(\Omega) + Cn\eta_n\big\}\\
& \le C|\cuP_{n,k}(\cX^4\times \cY)|\exp\big\{n\sup_{\omega\in\cS} S_*(\omega) + C \sqrt{n}\log n\big\}\\
& \le Cn^C \exp\big\{n\sup_{\omega\in\cS} S_*(\omega) + C \sqrt{n}\log n\big\}\, ,
\end{align*}
which implies the claim.
\end{proof}

The next result provides a sufficient condition for weak recovery using the estimator $\hbtheta$ satisfying Eq.~\eqref{eq:IT-Estimator}; this is a more general version of Theorem~\ref{exhaustive_general}.
\begin{customthm}{E}\label{thm:GeneralMoment}
Assume there exists $c_M>0$ such that $c_M^{-1}\le Q(y|x_1,x_2)\le c_M$ for all $x_1,x_2\in\cX$, $y\in\cY$.
Assume $S_*(\onu\times\onu)<-\eps<0$.
Then there exists $\delta = \delta(\eps,c_M)>0$ such that, with probability at least $1- c_0^{-1}\exp\{-c_0(\log n)^2\}$,
the following happens
\begin{align}
d _{\sTV} (\homega_{\hbtheta,\btheta_0},\onu\times\onu)\ge \delta\, .
\end{align}
\end{customthm}
\begin{proof}
Recall that $B_{\sTV}(\onu\times\onu;\delta)$ denotes the set of probability distributions $\omega\in\cuP(\cX\times\cX)$
such that $d _{\sTV} (\omega,\onu\times\onu)\le \delta$. We claim that, under the stated assumptions there exists $\delta,c_1>0$ such that,
setting $\cS_{\delta} = B_{\sTV}(\onu\times\onu;\delta)$, and $\cG_*$ as in Lemma \ref{lemma:Moment}, we have
\begin{align}
\E\left[Z(\cS_{\delta};G_n,\btheta_0,\bY)\,\bfone_{(G_n,\btheta_0,\bY)\in \cG_n^*}\right] \le e^{-c_1n} \, .\label{eq:KeyBound}
\end{align}
Hence, applying Lemma \ref{lemma:Moment}, it follows that, with probability at least $1- c_0^{-1}\exp\{-c_0(\log n)^2\}$ (eventually adjusting the constant $c_0$),
$Z(\cS_{\delta};G_n,\btheta_0,\bY)=0$. Hence $\homega_{\hbtheta,\btheta_0}\not\in \cS_{\delta}$ by construction of $\hbtheta$, and therefore the claim follows.

We are left with the task of proving Eq.~(\ref{eq:KeyBound}), which by Lemma \ref{lemma:Moment} and a continuity argument, follows from $S(\onu\times\onu)<-\eps$.
\end{proof}

The condition $S_*(\onu\times\onu)<-\eps$ might be hard to verify in practice because it requires solving the optimization problem~\eqref{eq:S-star}. We provide a simpler sufficient condition, which is the content of Theorem~\ref{exhaustive_general}:
\begin{lemma}
Let $(\theta_1,\theta_2,Y) \sim \onu_e$, with $\onu_e(x_1,x_2,y) = \onu(x_1)\onu(x_2)Q(y | x_1,x_2)$ . We have $S_*(\onu\times\onu) \le -\frac{k}{2}I(\theta_1,\theta_2;Y)+H(\theta_1)$.
\end{lemma}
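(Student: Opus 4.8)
The plan is to unfold the definition of $S_*(\onu\times\onu)$ and bound $S(\Omega)$ for an arbitrary feasible $\Omega$. Fix $\Omega\in\cuP(\cX^4\times\cY)$ satisfying the three constraints in the definition of $S_*$, and let $(X_1,\tilde X_1,X_2,\tilde X_2,Y)\sim\Omega$; here $(X_1,X_2)$ plays the role of the ground-truth labels at the two endpoints of a random edge, $(\tilde X_1,\tilde X_2)$ the candidate labels, and $Y$ the observation. The first step is to simplify $S(\Omega)$ using the constraint $(\pi_1\Omega+\pi_2\Omega)/2=\onu\times\onu$: since $\onu\times\onu$ is the uniform law on $\cX^2$, its Shannon entropy equals $2H(\onu)=2\log|\cX|$, so that $S(\Omega)=\tfrac{k}{2}H(\Omega)-(k-1)H(\onu)-\tfrac{k}{2}H(\onu_{\se})$.

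The second step bounds $H(\Omega)$ by the chain rule and elementary entropy inequalities. Write $H(\Omega)=H(X_1,X_2,Y)+H(\tilde X_1,\tilde X_2\mid X_1,X_2,Y)$. The remaining two constraints say precisely that the $(X_1,X_2,Y)$-marginal and the $(\tilde X_1,\tilde X_2,Y)$-marginal of $\Omega$ both equal $\onu_{\se}$; hence $H(X_1,X_2,Y)=H(\onu_{\se})$, and in particular the $Y$-marginal of $\Omega$ agrees with that of $\onu_{\se}$. Since conditioning reduces entropy, $H(\tilde X_1,\tilde X_2\mid X_1,X_2,Y)\le H(\tilde X_1,\tilde X_2\mid Y)=H(\tilde X_1,\tilde X_2,Y)-H(Y)=H(\onu_{\se})-H(Y)$, where $H(Y)$ denotes the entropy of the common $Y$-marginal. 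Altogether $H(\Omega)\le 2H(\onu_{\se})-H(Y)$.

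The last step converts $H(Y)$ into a mutual information and collects terms. Under $\onu_e=\onu_{\se}$ the labels $\theta_1,\theta_2$ are independent and uniform, so $H(\theta_1,\theta_2)=2H(\onu)$ and $I(\theta_1,\theta_2;Y)=H(Y)-H(Y\mid\theta_1,\theta_2)=H(Y)-\big(H(\onu_{\se})-2H(\onu)\big)$; equivalently $H(Y)=I(\theta_1,\theta_2;Y)+H(\onu_{\se})-2H(\onu)$. Substituting this into the bound on $H(\Omega)$ and then into the simplified expression for $S(\Omega)$, the $H(\onu_{\se})$-terms cancel and one is left with $S(\Omega)\le-\tfrac{k}{2}I(\theta_1,\theta_2;Y)+H(\onu)$. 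Since $H(\onu)=H(\theta_1)$, taking the supremum over feasible $\Omega$ yields the claim (if the feasible set happens to be empty the bound is vacuous). The argument is purely elementary — chain rule, subadditivity of entropy, and ``conditioning reduces entropy'' — so there is no genuine obstacle; the only care needed is the bookkeeping of which marginal each of the three constraints fixes, and the observation that the symmetrized vertex marginal $(\pi_1\Omega+\pi_2\Omega)/2$ is pinned to the uniform law on $\cX^2$ (so its entropy is $2\log|\cX|$, not $\log|\cX|$).
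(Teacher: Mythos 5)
Your proof is correct and follows essentially the same route as the paper's: the key step in both is the bound $H(\Omega)\le H(X_1,X_2,Y)+H(\tilde X_1,\tilde X_2,Y)-H(Y)=2H(\onu_{\se})-H(Y)$ (your chain-rule-plus-``conditioning reduces entropy'' decomposition is the same inequality the paper phrases as subadditivity of the conditional entropy given $Y$), after which identifying $H(Y)-H(Y\mid\theta_1,\theta_2)$ with the mutual information gives the claim. The bookkeeping of the constraints and the remark about a possibly empty feasible set are fine.
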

\begin{proof}
Let $\Omega_*\in\cuP(\cX^2\times\cX^2\times\cY)$ be any distribution achieving the maximum in  \eqref{eq:S-star} for $\omega = \onu\times\onu$,
and let $(X_1,\tX_1,X_2,\tX_2,Y)$ have distribution $\Omega_*$. Note that $(X_1,X_2,Y)\sim \onu_e$, $(\tX_1,\tX_2,Y)\sim \onu_e$,
$(X_1,\tX_1)\sim \omega$, $(X_2,\tX_2)\sim \omega$, $\omega=\onu\times\onu$, whence
\begin{align*}
S_*(\onu\times\onu) &= S(\Omega_*)\\
& = \frac{k}{2}H(X_1,\tX_1,X_2,\tX_2,Y)-(k-1)H(X_1,X_2) -\frac{k}{2}H(X_1,X_2,Y)+(k-1)H(X_1)\\
& = \frac{k}{2}H(X_1,\tX_1,X_2,\tX_2|Y) +\frac{k}{2}H(Y)-2(k-1)H(X_1) -\frac{k}{2}H(X_1,X_2,Y)+(k-1)H(X_1)\\
& \stackrel{(a)}{\le} \frac{k}{2}H(X_1,X_2|Y) +\frac{k}{2}H(\tX_1,\tX_2|Y) +\frac{k}{2}H(Y)-(k-1)H(X_1) -\frac{k}{2}H(X_1,X_2,Y)\\
& = k H(X_1,X_2,Y) -\frac{k}{2}H(Y)-(k-1)H(X_1) -\frac{k}{2}H(X_1,X_2,Y)\\
& = -\frac{k}{2}I(X_1,X_2;Y)+H(X_1)\, .
\end{align*}
Step $(a)$ follows by sub-additivity of entropy. 
\end{proof}
Hence, if $\frac{k}{2}I(\theta_1,\theta_2;Y) \ge H(X\theta_1)+\eps$, then $S_*(\onu\times \onu)<-\eps<0$, and the claim follows by applying Theorem \ref{thm:GeneralMoment}.

\subsection{Proof of Corollary \ref{overlap_corollary}}

Let $\mathscr{B}_{q \times q}$ is the set of all $q \times q$ non-negative doubly stochastic matrices (with $q = |\cX|$). It holds that 
\begin{equation}
\overlap(\hbtheta,\btheta_0) = \max_{\sigma \in \mathscr{S}_{q}} \sum_{x \in \cX} \homega_{\hbtheta,\btheta_0}(x,\sigma(x)) =
\max_{\pi \in \mathscr{B}_{q \times q}} \sum_{x,x' \in \cX} \pi(x,x')\homega_{\hbtheta,\btheta_0}(x,x').
\end{equation}
Indeed, since the right-most expression in the above display is a linear program, the objective value is maximized at the extreme points of the polytope $\mathscr{B}_{q \times q}$, which by Birkhoff's theorem are permutation matrices: $\pi(x,y) = \bfone_{y = \sigma(x)}$ for $\sigma \in \mathscr{S}_{q}$, hence the equality.    

Since $q\homega_{\hbtheta,\btheta_0} \in \mathscr{B}_{q \times q}$ (we abused notation and identified the joint distribution $\homega_{\hbtheta,\btheta_0}$ on $\cX \times \cX$ with a $q \times q$ matrix), we have 
\[\overlap(\hbtheta,\btheta_0)\ge q \sum_{x,x'} \big(\homega_{\hbtheta,\btheta_0}(x,x')\big)^2.\]
Now, on the event $d_{\sTV}(\homega_{\hbtheta,\btheta_0},\onu\times\onu)\ge \delta$, we have 
$\sum_{x,x'} (\homega_{\hbtheta,\btheta_0}(x,x'))^2 \ge \frac{1}{q^2} + \frac{\delta^2}{q}$. Hence $\overlap(\hbtheta,\btheta_0) \ge \frac{1}{q}+ \delta^2$ on the same event. 

Next, we prove the second statement. For two functions $f,g : \cX \mapsto \R$, we let $\homega_{\hbtheta,\btheta_0}(f,g) := \sum_{x_1,x_2} \homega_{\hbtheta,\btheta_0}(x_1,x_2) f(x_1)g(x_2)$.
Theorem~\ref{exhaustive_general} implies 
\[\P\left(\exists f,g : \cX \mapsto \R \mbox{ s.t. } \big|\homega_{\hbtheta,\btheta_0}(f,g)\big|\ge \frac{\delta}{q(q-1)} \right) \ge 1-c_0^{-1}\exp\{-c_0(\log n)^2\}.\] 
Indeed, if $d_{\sTV}(\homega_{\hbtheta,\btheta_0},\onu\times\onu)\ge \delta$ then there exist $x_1,x_2 \in \cX$ such that $|\homega_{\hbtheta,\btheta_0}(x_1,x_2) - \frac{1}{q^2}| \ge \frac{\delta}{q^2}$. Now take $f = (\delta_{x_1} - \frac{1}{q})\frac{q}{q-1}$ and $g = (\delta_{x_2} - \frac{1}{q})\frac{q}{q-1}$. 

On the other hand, letting $\Fspace := \{ f =  (\delta_{x} - \frac{1}{q})\frac{q}{q-1}, x \in \cX\}$, a union bound implies 
\[\P\left(\exists f,g : \cX \mapsto \R \mbox{ s.t. } \big|\homega_{\hbtheta,\btheta_0}(f,g)\big|\ge \frac{\delta}{q(q-1)} \right) \le q^2 \max_{f,g \in \Fspace}\P\left(\big|\homega_{\hbtheta,\btheta_0}(f,g)\big|\ge \frac{\delta}{q(q-1)} \right).\]   
Therefore, there exists a (deterministic) pair $f,g \in \Fspace$ such that $\P\big(|\homega_{\hbtheta,\btheta_0}(f,g)|\ge \frac{\delta}{q(q-1)}\big) \ge \frac{1-o_n(1)}{q^2} >c_0 >0$. By Markov's inequality, this in turn implies that for this specific pair $f,g \in \Fspace$ we have
\begin{equation}\label{lb_overlap}
\E \big[\homega_{\hbtheta,\btheta_0}(f,g)^2\big] \ge c_0 \frac{\delta^2}{(q(q-1))^2} = c(q)\delta^2.
\end{equation}
Now consider estimating the matrix $\bX_f$ (recall that $(X_f)_{uv} = f(\theta_u)f(\theta_v)$) with the matrix $\hatbX^{(\lambda)}$ having entries 
$\widehat{X}^{(\lambda)}_{uv} = \lambda g(\hat{\theta}_u)g(\hat{\theta}_v)$, with $\lambda= n^2 \E \big[\homega_{\hbtheta,\btheta_0}(f,g)^2\big] \big/ \E\big[\|\hatbX^{(1)}\|_F^2\big]$. 
Since 
\[\frac{1}{n^2}\lbr \hatbX^{(\lambda)}, \bX_f \rbr  = \frac{\lambda}{n^2} \sum_{u,v \in V_n} f(\theta_u)f(\theta_v)g(\hat{\theta}_u)g(\hat{\theta}_v) = \lambda\homega_{\hbtheta,\btheta_0}(f,g)^2,\]
the loss $\risk_n$ incurred is
\begin{align*}
\risk_n\big(\hatbX^{(\lambda)};f\big) &= \frac{1}{n^2} \E \|\bX_f\|_F^2 - 2 \lambda \E \big[\homega_{\hbtheta,\btheta_0}(f,g)^2\big] +  \frac{\lambda^2}{n^2} \E \|\hatbX^{(1)}\|_F^2\\
&= \frac{1}{n^2} \E \|\bX_f\|_F^2 - \frac{\E \big[\homega_{\hbtheta,\btheta_0}(f,g)^2\big]^2}{(\E \|\hatbX^{(1)}\|_F^2/n^2)}.
\end{align*}
We have $\E \|\bX_f\|_F^2 = \sum_{u,v\in V_n} \E[f(\theta_u)^2f(\theta_v)^2] = \frac{n}{q}\sum_{x \in \Z_q}f(x)^4+ n(n-1)$. So $\lim \frac{1}{n^2}\E \|\bX_f\|_F^2= 1$. Furthermore, since $\|g\|_{\infty}=1$, $\E \|\hatbX^{(1)}\|_F^2 =\sum_{u,v\in V_n} \E[g(\hat{\theta}_u)^2g(\hat{\theta}_v)^2] \le n^2$. Combining these estimates with the lower bound~\eqref{lb_overlap} implies $\limsup \risk_n \big(\hatbX^{(\lambda)};f\big) < 1 -c(q) \delta^2$. Since $\risk_n^{\textup{Bayes}}(f)\le \risk_n \big(\hatbX^{(\lambda)};f\big)$ this concludes the proof.

\section{Local algorithms on random graphs: Technical proofs}
\label{sec:Loc-Zq}

\subsection{Proof of Theorem~\ref{local_alg_Z_q}}
\label{sec:Proof_local_alg_Z_q}

\subsubsection{Preliminaries}
Let $(T_k,o)$ denote the infinite $k$-regular tree rooted at $o$. (Except the root $o$, every vertex has $k-1$ offsprings.) 
By expanding  the square, we get  
\begin{align*}
\E \big[d_{\sTV}(\widehat{\mu}_{G_n,u,l},\onu)^2\big]\le \frac{q}{4}\E \big[d_{\ell_2}(\widehat{\mu}_{G_n,u,l},\onu)^2\big] = \frac{q}{4}
\left(\sum_{x \in \Z_q} \E\{\widehat{\mu}_{G_n,u,l}(x)^2\} - \frac{1}{q}\right)\,.
\end{align*}
(Here, $d_{\ell_2}$ is the $\ell_2$ distance in $\R^q$.)     
Since the graph sequence $(G_n)_{n \ge 1}$ almost surely converges locally--weakly to (a Dirac delta on) $(T_k,o)$, we have 
\begin{equation}\label{l2_distance}
\lim_{n \to \infty} \frac{1}{|V_n|}\sum_{u\in V_n}\E \big[d_{\ell_2}(\widehat{\mu}_{G_n,u,l},\onu)^2\big] = \sum_{x \in \Z_q} \E\Big[\P\big(\theta_o = x | \b Y^{(\eps)}_{B_{T_k}(o,l)}\big)^2\Big] - \frac{1}{q}.
\end{equation}
Recall 
\[\mu_{o,l}(x) =\P\Big(\theta_o = x | Y^{(\eps)}_{B_{T_k}(o,l)}\Big)~~~\mbox{for all}~ x \in \Z_q.\]
Let $Q_{x} = \text{Law}(\mu_{o,l} | \theta_o =x,\xi^{(\eps)}_o=\star)$ be the conditional law of $\mu_{o,l}$ given the value at the root being $x$ and no information revealed by the side channel. This is a probability distribution on the simplex $\Delta^{q-1}=\cuP(\Z_q)$: $Q_x \in \cuP(\Delta^{q-1})$. 
 Furthermore, let $Q = \text{Law}(\mu_{o,l}|\xi^{(\eps)}_o = \star)= \frac{1}{q}\sum_{x \in \Z_q} Q_x$. The following simple lemma from~\cite{mezard2006reconstruction} is quite useful.
\begin{lemma}\label{density}
For every $x\in \Z_q$, $Q_x$ has a density w.r.t.\ $Q$, and $\frac{\rmd Q_{x}}{\rmd Q} (\mu) = q \mu(x)$ for all $\mu \in \Delta^{q-1}$. 
\end{lemma}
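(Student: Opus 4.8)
\textbf{Proof proposal for Lemma~\ref{density}.}
The plan is to derive the claimed Radon--Nikodym identity from the standard Bayes-posterior reweighting identity (the ``Nishimori'' identity). The first step is a bookkeeping reduction: throughout we condition on the event $\{\xi^{(\eps)}_o = \star\}$, and I claim this conditioning changes nothing about the joint law of $(\btheta, \bar Y)$, where $\bar Y$ denotes the observations $Y^{(\eps)}_{B_{T_k}(o,l)}$ with the root's side observation $\xi^{(\eps)}_o$ deleted. Indeed, since the \BEC\ erases with probability $1-\eps$ \emph{independently of the label}, one has $\P(\xi^{(\eps)}_o = \star \mid \btheta) = 1-\eps$, a constant; hence the conditional law of $(\btheta,\bar Y)$ given $\{\xi^{(\eps)}_o=\star\}$ equals its unconditional law. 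In particular $\theta_o$ is still uniform on $\Z_q$ after conditioning, and on this event $\mu_{o,l}(x) = \P(\theta_o = x \mid \bar Y)$, so that the random measure $\mu_{o,l}$ is a $\sigma(\bar Y)$-measurable function.

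The second step is the reweighting identity itself. Fix $x \in \Z_q$ and a bounded measurable $\phi : \Delta^{q-1} \to \R$. Using that $\mu_{o,l}$ is $\sigma(\bar Y)$-measurable together with the tower property,
\[
\E\big[\phi(\mu_{o,l})\,\bfone\{\theta_o = x\}\,\big|\,\xi^{(\eps)}_o=\star\big]
= \E\big[\phi(\mu_{o,l})\,\P(\theta_o = x\mid \bar Y)\,\big|\,\xi^{(\eps)}_o=\star\big]
= \E\big[\phi(\mu_{o,l})\,\mu_{o,l}(x)\,\big|\,\xi^{(\eps)}_o=\star\big].
\]
Since $\P(\theta_o = x \mid \xi^{(\eps)}_o = \star) = 1/q$, dividing through gives
\[
\int \phi \,\rmd Q_x \;=\; q\,\E\big[\phi(\mu_{o,l})\,\mu_{o,l}(x)\,\big|\,\xi^{(\eps)}_o=\star\big] \;=\; q \int \phi(\mu)\,\mu(x)\,\rmd Q(\mu),
\]
where the last equality is just the definition of $Q$. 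As $\phi$ was an arbitrary bounded measurable test function, this shows $Q_x \ll Q$ with $\rmd Q_x / \rmd Q(\mu) = q\,\mu(x)$, which is the assertion. (As a sanity check, averaging over $x$ recovers $\sum_{x \in \Z_q} \tfrac1q\cdot q\,\mu(x) = \sum_{x}\mu(x) = 1$, consistent with $Q = \tfrac1q \sum_x Q_x$.)

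I do not expect a genuine obstacle here: the argument is essentially two lines once the setup is right. The only place that requires care --- and the one point I would be explicit about --- is the first step, i.e.\ checking that conditioning on $\{\xi^{(\eps)}_o = \star\}$ is harmless: it must neither perturb the uniform prior on $\theta_o$ nor create a dependence that would spoil the $\sigma(\bar Y)$-measurability of $\mu_{o,l}$. Both follow from the erasure probability being independent of $\theta_o$, so this is the crux to state carefully, after which the reweighting identity does the rest.
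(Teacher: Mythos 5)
Your proposal is correct and follows essentially the same route as the paper: the paper's proof is exactly the tower-property/reweighting identity $\E[\psi(\mu_{o,l})\bfone\{\theta_o=x\}\mid\xi^{(\eps)}_o=\star]=\E[\psi(\mu_{o,l})\mu_{o,l}(x)\mid\xi^{(\eps)}_o=\star]$ combined with $\P(\theta_o=x\mid\xi^{(\eps)}_o=\star)=1/q$. Your extra care in verifying that conditioning on $\{\xi^{(\eps)}_o=\star\}$ preserves the law of $(\btheta,\bar Y)$ and the $\sigma(\bar Y)$-measurability of $\mu_{o,l}$ is a worthwhile explicit justification of a step the paper takes implicitly, but it is not a different argument.
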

\begin{proof}
Let $\psi :\Delta^{q-1} \to \R$ be bounded measurable. We let $Y \equiv \{Y^{(\eps)}_{B_{T_k}(o,l)}\}$. Then
\begin{align*}
\E\big[\psi(\mu_{o,l})|\theta_o=x,\xi^{(\eps)}_o = \star\big] &= q\E\big[\psi(\mu_{o,l})\bfone\{\theta_o=x\}|\xi^{(\eps)}_o = \star\big] \\
&= q \E\big[\E\big[\psi(\mu_{o,l})\bfone\{\theta_o=x\}|Y,\xi^{(\eps)}_o = \star\big]|\xi^{(\eps)}_o = \star\big]\\
&= q \E\big[\psi(\mu_{o,l})\E\big[\bfone\{\theta_o=x\}|Y,\xi^{(\eps)}_o = \star\big]|\xi^{(\eps)}_o = \star\big]\\
&= q \E\big[\psi(\mu_{o,l})\mu_{o,l}(x)|\xi^{(\eps)}_o = \star\big].
\end{align*}
Therefore $\rmd Q_{x} / \rmd Q (\mu) = q \mu(x)$.
\end{proof}

With the above lemma in hand, the right-hand side in~\eqref{l2_distance} can be written as
\begin{align*}
&\lim_{n \to \infty} \frac{1}{|V_n|}\sum_{u\in V_n}\E \big[d_{\ell_2}(\widehat{\mu}_{G_n,u,l},\onu)^2\big] \\
&\hspace{2cm}=\eps \sum_{x} \E[\bfone\{x = \theta_o\}|\xi_o\neq\star] + (1-\eps)\sum_{x} \E[\mu_{o,l}(x)^2|\xi_o=\star] - \frac{1}{q}\\
&\hspace{2cm}=\eps + \frac{1-\eps}{q}\sum_{x} \E[\mu_{o,l}(x)|\theta_o=x,\xi_o=\star] - \frac{1}{q}\\
&\hspace{2cm}=\eps \frac{q-1}{q} + (1-\eps) \Big(\E[\mu_{o,l}(\theta_o) |\xi_o=\star] - \frac{1}{q}\Big).
\end{align*}
The first equality follows by conditioning on $\xi^{(\eps)}_o$ as noting that conditional on $\xi^{(\eps)}_o \neq \star$, $\mu_{o}(x) = \bfone\{x=\xi_o^{(\eps)}\}$. Lemma~\ref{density} was used to obtain the second equality. 

In light of the above expression, we will track the evolution of the sequence 
\[\hat{z}_{o,l} := \E[\mu_{o,l}(\theta_o) |\xi_o=\star] - \frac{1}{q},~~~ l\ge 0,\] 
which measures the deviation from uniformity of the local marginal at the root. In order to exploit the recursive structure of the tree, we will need to work at the level of the first offsprings of $o$. 
For every offspring $u$ of $o$, we denote by $T^{\downarrow}(u,l)$ the first $l$ generations of the subtree rooted at $u$ not containing $o$; this is a $(k-1)$--ary tree. Now, (with a slight notation override) we redefine 
\[\mu_{u,l}(x) :=\P\Big(\theta_u = x | Y^{(\eps)}_{T^{\downarrow}_k(u,l)}\Big)~~~\mbox{for all}~ x \in \Z_q,\]  
and consider the auxiliary sequence  
\[z_{l} := \E[\mu_{u,l}(\theta_u)|\xi_u=\star]-\frac{1}{q},~~~ l\ge 0.\]
Note that the above definition does not depend on $u$ since $\mu_{u,l}(\theta_u)$ have the same distribution for all $u \sim o$.     
In the next proposition, we relate the two sequences $(\hat{z}_{o,l})_{l\ge 0}$ and $(z_l)_{l\ge 0}$, and establish a recursion for the latter.
\begin{proposition}\label{error_recursion}
Let $\kappa = (k-1)(1-p)^2$ and $\hat{\kappa} = k(1-p)^2$. There exists constants $c, C>0$ depending only on $q$ such that the following holds. If for some $l\ge1$, $\hat{\kappa}|z_{l-1}| \le c$ and $\hat{\kappa}\eps \le c$, then 
\begin{align*}
\Big| \hat{z}_{o,l} - \eps\hat{\kappa} \frac{q-1}{q} - (1-\eps)\hat{\kappa}  z_{l-1}\Big| &\le C \hat{\kappa}^2 (z_{l-1}^2+\eps^2),\\
\mbox{and}~~~~~
\Big| z_{l} - \eps\kappa \frac{q-1}{q} - (1-\eps)\kappa  z_{l-1}\Big| &\le C \kappa^2 (z_{l-1}^2+\eps^2).
\end{align*}
\end{proposition}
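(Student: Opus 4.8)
The goal is a quantitative first-order expansion of the belief-propagation recursion on the $(k-1)$-ary tree for $\Z_q$-synchronization with $\BEC(\epsb)$ side information, controlling the deviation-from-uniformity scalar $z_l = \E[\mu_{u,l}(\theta_u)\mid\xi_u=\star]-1/q$. The plan is to write down the exact BP recursion for the marginal $\mu_{u,l}$ in terms of the marginals $\mu_{w,l-1}$ of the $k-1$ children $w$ of $u$ and the channels on the edges $(u,w)$, then Taylor-expand it around the uniform fixed point to second order. Because the $\Z_q$-synchronization channel acts by a cyclic shift (with probability $1-p$ the observation $Y_{uw}$ is $\theta_u-\theta_w$, with probability $p$ it is uniform noise), the natural coordinates are the Fourier coefficients of $\mu_{u,l}$ over $\Z_q$; in that basis the edge operation becomes multiplication by $1-p$ on each nontrivial character, and the recursion decouples nicely.

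First I would set up the BP update. Conditioned on $\theta_u=x$ and $\xi_u=\star$, the observations below $u$ factor over the children, so $\mu_{u,l}(x) \propto \prod_{w} \big(\sum_{y}\mu_{w,l-1}(y)\, \widetilde Q(Y_{uw}\mid x,y)\big)$ where $\widetilde Q$ encodes both the channel on edge $(u,w)$ and the possibility that $\xi_w\neq\star$ (i.e.\ the child marginal is a point mass). I would split the children into the $\Bin(k-1,\eps)$ of them whose label is directly revealed (contributing a deterministic shift term, giving the $\eps\kappa\frac{q-1}{q}$ piece) and the others whose marginals carry the recursively-controlled fluctuation $z_{l-1}$. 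Then I would introduce the centered perturbation $\delta_{w}(x)=\mu_{w,l-1}(x)-1/q$, note that $\E[\delta_w]$ relates to $z_{l-1}$ via Lemma~\ref{density} (the size-biasing identity $\rmd Q_x/\rmd Q(\mu)=q\mu(x)$), and expand the product and the normalization to second order in the $\delta_w$'s. The linear term produces the factor $(k-1)(1-p)^2 = \kappa$ (one factor $(1-p)$ from the edge channel acting on the child's fluctuation, one more $(1-p)$ from the conditional expectation over $\theta_u$), the quadratic remainder is $O(\kappa^2(z_{l-1}^2+\eps^2))$; the smallness hypotheses $\hat\kappa|z_{l-1}|\le c$, $\hat\kappa\eps\le c$ keep the normalizing denominators bounded away from zero and let me absorb all higher-order terms into the $C\kappa^2(\cdots)$ error. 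The relation between $\hat z_{o,l}$ and $z_l$ is identical except the root has $k$ children instead of $k-1$, giving $\hat\kappa=k(1-p)^2$ in place of $\kappa$; I would prove the two bounds in parallel since the computation is the same.

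The main obstacle is the bookkeeping of the second-order remainder: I must show that \emph{all} error terms — the cross terms between distinct children, the quadratic self-terms, the expansion of the $1/Z$ normalization, and the contribution of the random number of revealed children fluctuating around its mean $(k-1)\eps$ — are uniformly $O(\kappa^2(z_{l-1}^2+\eps^2))$ and not, say, $O(\kappa z_{l-1}\eps)$ or $O((k-1)^2(1-p)^2\eps^2)$ with a bad dependence on $k$. The key points making this work are: (i) each child's fluctuation always enters multiplied by at least one edge factor $(1-p)$, so a product of $j$ distinct children's fluctuations carries $(1-p)^j$ and is counted with a $\binom{k-1}{j}$-type combinatorial weight, reorganizing into powers of $\kappa=(k-1)(1-p)^2$; (ii) a single child cannot contribute two factors of $(1-p)$ to a linear-in-$\delta$ term, so the honest linear coefficient is exactly $\kappa$ with no $k$-dependent correction; (iii) concentration of $\Bin(k-1,\eps)$ and boundedness of $f=\delta_x-1/q$ type test functions control the revealed-children term. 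I would organize this via a few scalar Fourier identities (using that for $a\in\Z_q\setminus\{0\}$, $\sum_x e^{2\pi\imnb ax/q}=0$) so that cross-character terms vanish and only the modulus-squared of the nontrivial Fourier modes survives, which is precisely what $z_l$ measures up to a $q$-dependent constant. Finally, a short argument using $z_0=0$ and the recursion (as already sketched after Eq.~\eqref{recursion} in the main text) shows the sequence stays in an $O_{q,\kappa}(\eps)$-neighborhood of $0$ when $\kappa<1$, which feeds back into Theorem~\ref{local_alg_Z_q}; but that last deduction is outside the scope of this proposition.
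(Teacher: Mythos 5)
Your plan follows the same route as the paper's proof: write the sum--product recursion at the root, center the child marginals as $\delta_u=\mu_{u,l-1}-\onu$, use the size-biasing identity of Lemma~\ref{density} to convert conditional expectations into quadratic functionals of $\delta_u$ (so that $z_{l-1}=\E[\delta_u(\theta_u)|\xi_u=\star]=\sum_x\E[\delta_u(x)^2|\xi_u=\star]$), and expand to second order, with the linear coefficient $(1-p)^2$ per child coming from one factor of $(1-p)$ in the message and one from averaging over $Y_{ou}$. The Fourier bookkeeping you propose is a cosmetic variant of the paper's direct computation of $\E[Z_o(y)]$ and $\E[Z_o(y)Z_o(y')]$, and your observation that $z_l$ is the total mass of the nontrivial modes is correct.

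There are, however, two places where the sketch as written would not go through and where the paper needs a specific device. First, the normalization: you propose to ``expand the normalization to second order'' on the grounds that the hypotheses $\hat\kappa|z_{l-1}|\le c$, $\hat\kappa\eps\le c$ keep the denominator away from $q$. They do not --- these are bounds on \emph{expectations}, while $\Sigma_{o,l}=\sum_yZ_o(y)$ is a random variable whose deviation from $q$ is not pointwise small (it can range over $[qp^k,\,q(1+(1-p)q)^k]$), so a Neumann/Taylor expansion of $1/\Sigma_{o,l}$ is not justified. The paper instead uses the exact identity $\frac{a}{b+c}=\frac{a}{b}-\frac{ac}{b^2}+\frac{c^2}{b^2}\cdot\frac{a}{b+c}$ with $b=q$, so that the remainder is the deterministic bound $0\le Z_o(x)/\Sigma_{o,l}\le1$ times $(\Sigma_{o,l}-q)^2$, whose \emph{expectation} is then shown to be $O(\hat\kappa^2(z_{l-1}^2+\eps^2))$. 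Second, the normalization introduces terms that are \emph{linear} in $\delta_u$ but off-diagonal, namely $\E[\delta_u(y-x+\theta_u)|\xi_u=\star]$ for $y\neq x$; these are not a priori of order $z_{l-1}^2$, and if they were only $O(z_{l-1})$ they would contaminate the linear coefficient. The paper's Lemma~\ref{maximal} shows $\sum_y\E[\delta_u(y-x+\theta_u)|\xi_u=\star]^2\le q\,z_{l-1}^2$ by rewriting each such term as an autocorrelation $\sum_z\E[\delta_u(y+z)\delta_u(z)|\xi_u=\star]$ (via Lemma~\ref{density}) and applying Cauchy--Schwarz; one also needs the cancellation $\sum_y\delta_u(y)=0$ to kill the single-child quadratic terms after summing over $y$. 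Your Parseval remark points in the right direction for this, but it must be made explicit, since the product over children does not literally diagonalize in the character basis. Finally, the binomial split over revealed children is unnecessary (and the concentration step it would require is avoidable): taking the $\eps$-mixture inside each child's factor and using independence across children gives the exact formula directly.
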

The proof of this proposition is presented in Section \ref{sec:error_recursion}
Theorem~\ref{local_alg_Z_q} follows directly from Proposition \ref{error_recursion}, as shown in the next Corollary.
\begin{corollary}
If $\kappa <1$ and $\hat{\kappa} \eps <c$ for a constant $c=c(q,\kappa)$ then there exists $L = L(q,\kappa)$ such that $|\hat{z}_{o,l}| \le L\eps$ for all $l \ge 0$.
\end{corollary}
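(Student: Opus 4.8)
The plan is a standard stable--fixed--point argument: since $\kappa<1$, the linear part of the recursion for $z_l$ in Proposition~\ref{error_recursion} contracts with rate bounded away from $1$, its fixed point lies at order $\eps$, and the quadratic remainder is only of order $\eps^2$, hence negligible. Throughout I write $c_{\mathrm R},C_{\mathrm R}$ for the constants $c,C$ of Proposition~\ref{error_recursion} (depending only on $q$), and recall that $\hat\kappa=\tfrac{k}{k-1}\kappa$, so that $\kappa\le\hat\kappa\le 2\kappa<2$; in particular $\hat\kappa$ and $\kappa$ are comparable and bounded, and the hypothesis $\hat\kappa\eps<c$ amounts, up to the value of $c=c(q,\kappa)$, to $\eps$ being small relative to constants depending only on $(q,\kappa)$.

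First I would record the initializations. At level $0$ the subtree $T^{\downarrow}_k(u,0)$ (resp.\ the ball $B_{T_k}(o,0)$) carries no edge observations, and on the event $\xi_u=\star$ (resp.\ $\xi_o=\star$) the corresponding marginal equals $\onu$; hence $z_0=0$ and $\hat{z}_{o,0}=0$. (One can also note $z_l,\hat{z}_{o,l}\ge 0$ via the tower property, but this will not be needed.)

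The core step is to prove, by induction on $l\ge 0$, that $|z_l|\le L'\eps$ with $L':=\tfrac{2\kappa}{1-\kappa}$, provided $c=c(q,\kappa)$ is chosen small enough. The base case is $z_0=0$. For the inductive step, assume $|z_{l-1}|\le L'\eps$. Then $\hat\kappa|z_{l-1}|\le L'\hat\kappa\eps<L'c$ and $\hat\kappa\eps<c$, so once $c\le c_{\mathrm R}/\max(L',1)$ the preconditions of Proposition~\ref{error_recursion} hold, and the triangle inequality (using $(1-\eps)\kappa\le\kappa$ and $\tfrac{q-1}{q}\le 1$) gives
\[
|z_l|\;\le\;(1-\eps)\kappa\,|z_{l-1}|+\eps\kappa\frac{q-1}{q}+C_{\mathrm R}\kappa^2\big(z_{l-1}^2+\eps^2\big)\;\le\;\kappa L'\eps+\kappa\eps+C_{\mathrm R}\kappa^2\big(L'^2+1\big)\eps^2 .
\]
Since $(1-\kappa)L'=2\kappa$ we have $\kappa L'=L'-2\kappa$, so the right-hand side equals $L'\eps-\kappa\eps+C_{\mathrm R}\kappa^2(L'^2+1)\eps^2=L'\eps-\kappa\eps\big(1-C_{\mathrm R}\kappa(L'^2+1)\eps\big)\le L'\eps$, the last step holding as soon as $C_{\mathrm R}\kappa(L'^2+1)\eps\le 1$, which (using $\eps<c/\hat\kappa\le c/\kappa$) again follows by taking $c=c(q,\kappa)$ small. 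This closes the induction and gives $\sup_{l\ge 0}|z_l|\le L'\eps$.

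Finally, with this uniform bound in hand, the first inequality of Proposition~\ref{error_recursion} (for $l\ge1$; the case $l=0$ is $\hat{z}_{o,0}=0$) yields
\[
|\hat{z}_{o,l}|\;\le\;\hat\kappa\eps\frac{q-1}{q}+(1-\eps)\hat\kappa\,|z_{l-1}|+C_{\mathrm R}\hat\kappa^2\big(z_{l-1}^2+\eps^2\big)\;\le\;\big(2+2L'+4C_{\mathrm R}(L'^2+1)\big)\eps\;=:\;L\eps ,
\]
using $\hat\kappa<2$ and $\eps<1$, with $L=L(q,\kappa)$. The argument has no genuine obstacle; the only care required is the bookkeeping ensuring that a single threshold $c=c(q,\kappa)$ simultaneously (i) keeps the preconditions of Proposition~\ref{error_recursion} valid along the entire induction and (ii) makes the quadratic error small enough for $[-L'\eps,L'\eps]$ to be an invariant interval.
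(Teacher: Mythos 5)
Your proof is correct and follows essentially the same route as the paper: an induction showing that $[-L'\eps,L'\eps]$ is invariant under the approximate recursion of Proposition~\ref{error_recursion} when $\kappa<1$, followed by one application of the first inequality to transfer the bound from $z_{l-1}$ to $\hat{z}_{o,l}$. The only difference is cosmetic --- the paper determines $L$ by solving the associated quadratic inequality, whereas you pick $L'=2\kappa/(1-\kappa)$ explicitly and verify invariance directly (and you are in fact more careful than the paper about checking that the preconditions $\hat\kappa|z_{l-1}|\le c$, $\hat\kappa\eps\le c$ hold throughout the induction).
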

\begin{proof}
We only need to prove that $|z_l| \le L \eps$, which we will achieve by induction. Since $z_0 = 0$, let's assume that $|z_l| \le L \eps$ for a fixed $l \ge 0$. Then we obtain from Proposition~\ref{error_recursion} that
\[|z_{l+1}| \le \eps\kappa \frac{q-1}{q} + \kappa L \eps +  C \kappa^2 (L^2+1)\eps^2.\]  
It suffices to find an $L$ (independent of $\eps$) such that the above upper bound is smaller than $L \eps$ for all $\eps$. This is equivalent to the quadratic inequality $\kappa \frac{q-1}{q} + C \kappa^2\eps - (1-\kappa) L + C \kappa^2 \eps L^2 \le 0$. The smallest solution to this inequality is $L_*=\frac{1-\kappa - \sqrt{\Delta}}{2\alpha}$, with $\alpha = C\kappa^2\eps$, and $\Delta = (1-\kappa)^2 - 4\alpha(\alpha+ \frac{q-1}{q}\kappa)$. Latter is non-negative provided that $\eps < c_0(q)(\frac{1}{\kappa}-1)^2$ for constant some $c_0(q)>0$. Moreover, for $\eps$ small enough we can write $\sqrt{\Delta} = (1-\kappa)(1-2\alpha (\alpha+\frac{q-1}{q}\kappa)/(1-\alpha)^2) + \bigo(\eps^2)$, so that $L_* = (\alpha + \frac{q-1}{q}\kappa)/(4(1-\kappa)) + \bigo(\eps^2)$. Therefore, we can take $L = (C + \frac{q-1}{q}\kappa)/(4(1-\kappa)) + 1$. 
\end{proof}

\subsubsection{Proof of Proposition \ref{error_recursion}: Analysis of the recursion on the tree}
\label{sec:error_recursion}

Here, we prove Proposition~\ref{error_recursion}. The two statements can be treated in exactly the same way; the only difference being that the root $o$ has $k$ children, while every other vertex has $k-1$ children. For this reason we only write a detailed proof for the first statement; the second one is obtained merely by replacing $k$ by $k-1$.     

Observe that conditional on $\xi_o^{(\eps)} = \star$ the marginal at $o$ is obtained from the marginals at its offsprings $u \sim o$ by a sum-product relation which, in the case of $\Z_q$--synchronization, has the form
\begin{align}
\mu_{o,l}(x) &= \frac{1}{\Sigma_{o,l}} \prod_{u\sim o} \sum_{y \in \Z_q} M_{x,y}(Y_{ou}) \mu_{u,l-1}(y) \nonumber\\
&=\frac{1}{\Sigma_{o,l}} \prod_{u\sim o}  \Big( \frac{p}{q} + (1-p) \mu_{u,l-1}(x-Y_{ou})\Big).\label{tree_recursion}
\end{align}
where $\Sigma_{o,l}$ is the normalizing constant, and $M_{x,y}(Y_{ou})= \P(\theta_o = x | \theta_u=y ,Y_{ou}) =\frac{p}{q} + (1-p)\bfone\{Y_{ou} = x-y\}$ is the Markov transition matrix associated to a `broadcasting process' on the tree according to the $\Z_q$--synchronization model.   

The recursion~\eqref{tree_recursion} induces a deterministic recursion over probability distributions over the simplex $\Delta^{q-1}=\cuP(\Z_q)$. 
Namely, if we define  $Q^{(l)}_{x} := \text{Law}(\mu_{o,l} | \theta_o =x)\in \cuP(\Delta^{q-1})$, we obtain a recursion that determines 
$Q^{(l)}_{x}$ in terms of $Q^{(l-1)}_{x}$ (notice that, by Lemma \ref{density}, once $Q^{(l)}_{x}$ is given for one value of $x$, it is determined for the other values as well.) 
The laws of $\mu_{u,l-1}$ are given by $\text{Law}(\mu_{u,l-1} | \theta_u =x) = Q^{(l-1)}_{x}$ for all $u\sim o$. Note that this law does not depend on $u$ since $\mu_{u,l-1}$ are i.i.d.\ given $\theta_o$. Then $Q^{(l)}_{x}$ can be obtained from $Q^{(l-1)}_{x}$ as follows: 
\begin{enumerate}
\item Draw $\theta_o$ and $\theta_u, \forall u \sim o$ independently and uniformly at random from $\Z_q$.
\item Construct $\{Y_{ou}, u \sim o\}$ according to the $\Z_q$--synchronization model~\eqref{Z_q_model}.
\item Draw $\mu_{u,l-1}$ from $Q^{(l-1)}_{\theta_u}$ independently for each $u\sim o$. 
\item Construct a distribution $\mu$ according to~\eqref{tree_recursion}.
\item Then, given $\xi^{(\eps)}_{o} =\star$, $\mu_{o,l}$ has the same law as $\mu$.
\end{enumerate}

We now analyze the map described above. Define
\[Z_o (x) := \prod_{u \sim o} \big(p + (1-p)q\mu_u(x-Y_{ou})\big),\]
so $\mu_o(x) = Z_o(x)/\sum_{y} Z_o(y)$, where we have dropped the indices $l$ for convenience. Following the analysis of~\cite{sly2011}, we use the identity $\frac{a}{b+c} = \frac{a}{b} - \frac{ac}{b^2} + \frac{c^2}{b^2} \frac{a}{b+c}$ with $a = Z_o(x)$, $b = q$ and $c = \sum_{y} Z_o(y) - q$ to write
\begin{equation}\label{decomposition}
\mu_o(x) = \frac{1}{q}Z_o(x) - \frac{1}{q^2} Z_o(x)\Big(\sum_y Z_o(y) - q\Big) + \Big(\frac{1}{q}\sum_y Z_o(y) -1\Big)^2\frac{Z_o(x)}{\sum_y Z_o(y)}.
\end{equation}
Next we compute the conditional expectations of $Z_o(y)$ and $Z_o(y)Z_o(y')$ (given $\theta_o = x$ and $\xi_o = \star$) in order to control $\E [\mu_o(\theta_o)| \xi_o = \star]$. 
\begin{lemma}\label{computations}
Let $\delta_u := \mu_{u,l} - \frac{1}{q}$ for $u \sim o$. For all $x,y,y' \in \Z_q$, we have
\begin{align}\label{order_one}
\E [Z_o(y)| \theta_o =x,\xi_o=\star] &=\Big(1+ \eps(1-p)^2q \big(\bfone_{y=x}-\frac{1}{q}\big) \\
&~~~~~~~+(1-\eps)(1-p)^2q \E[\delta_u(y-x+\theta_u)|\xi_u=\star]\Big)^{k},\nonumber 
\end{align}
and
\begin{align}\label{order_two} 
\E[Z_o(y)Z_o(y') | \theta_o = x,\xi_o=\star] &= \Big(1 + \eps p(1-p)^2q  \Big(\big(\bfone_{y=x}-\frac{1}{q}\big) +  \big(\bfone_{y'=x}-\frac{1}{q}\big)\Big)\\
&~~~~+ \eps (1-p)^2q \big(\bfone_{y=y'}-\frac{1}{q}\big)\nonumber\\
&~~~~+ \eps (1-p)^3q^2  \bfone_{y=y'}\big(\bfone_{y=x}-\frac{1}{q}\big)\nonumber\\
&~~~~+ (1-\eps)(1-p)^2q\E[\delta_u(y-x+\theta_u)| \xi_u=\star] \nonumber\\
&~~~~+ (1-\eps)(1-p)^2q \E[\delta_u(y'-x+\theta_u)| \xi_u=\star]\nonumber\\
&~~~~+ (1-\eps)p(1-p)^2q\sum_{z}\E[ \delta_u(y-z)\delta_u(y'-z) | \xi_u = \star] \nonumber\\
&~~~~+ (1-\eps)(1-p)^3q^2\E[\delta_u(y-x+\theta_u)\delta_u(y'-x+\theta_u) |\xi_u=\star]\Big)^{k}.\nonumber
\end{align}
\end{lemma}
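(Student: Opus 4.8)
The plan is to condition on $\theta_o=x$ and exploit the tree structure: given $\theta_o$, the $k$ triples $(\theta_u,Y_{ou},\mu_{u,l})$ over the children $u\sim o$ are i.i.d., and $\xi_o$ is independent of all subtree and edge data given $\theta_o$. Hence
\[
\E[Z_o(y)\mid\theta_o=x,\xi_o=\star]=\Big(\E\big[p+(1-p)q\,\mu_u(y-Y_{ou})\mid\theta_o=x\big]\Big)^{k},
\]
and the second moment is the $k$-th power of $\E[(p+(1-p)q\mu_u(y-Y_{ou}))(p+(1-p)q\mu_u(y'-Y_{ou}))\mid\theta_o=x]$. This reduces both identities to a single-edge computation; raising to the $k$-th power at the end produces the stated product forms.

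First I would analyse the single-edge expectation by conditioning further on three independent ingredients: $(i)$ the child label $\theta_u=z$, uniform and independent of $\theta_o$; $(ii)$ whether the edge noise fires, which happens with probability $p$ and makes $Y_{ou}$ uniform and independent of everything, versus not firing (probability $1-p$), in which case $Y_{ou}=x-z$; and $(iii)$ whether the side channel at $u$ reveals $\theta_u$ (probability $\eps$, so $\mu_u=\delta_z$) or not (probability $1-\eps$, so $\mu_u$ carries the depth-$(l-1)$ subtree posterior law). Writing $\mu_u=\tfrac1q+\delta_u$ and $p+(1-p)q\mu_u(a)=1+(1-p)q\,\delta_u(a)$, I would use two elementary facts repeatedly: $\sum_a\delta_u(a)=0$, which annihilates every term in which $Y_{ou}$ is the pure-noise (uniform) observation and only one copy of $\delta_u$ appears — this is exactly what leaves behind the autocorrelation term $\sum_z\E[\delta_u(y-z)\delta_u(y'-z)\mid\xi_u=\star]$ in the second moment; and a relabelling symmetry of the $\Z_q$-synchronization model on the tree — shifting all vertex labels and side observations by a common group element leaves the joint law invariant while sending $(\theta_u,\mu_u(\cdot))$ to $(\theta_u+a,\mu_u(\cdot-a))$ — so that quantities of the form $\E[\delta_u(c+\theta_u)\mid\theta_u=z,\xi_u=\star]$ and $\E[\delta_u(c+\theta_u)\delta_u(c'+\theta_u)\mid\theta_u=z,\xi_u=\star]$ do not depend on $z$ and may be replaced by the corresponding conditional expectations given only $\xi_u=\star$. (Lemma~\ref{density} is the companion tilting identity relating $Q_x$ to the unconditional law $Q$, available in this step to pass between the two.)

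Next I would expand the relevant products in each of the $2\times2$ (noise $\times$ side-channel) cases, take the expectation over $Y_{ou}$ using the $\sum_a\delta_u(a)=0$ cancellations, then over $\mu_u$ using the relabelling symmetry, and finally average over $\theta_u=z$ — which is trivial since everything has become $z$-independent. Collecting the four weighted contributions and using $p^2+2p(1-p)+(1-p)^2=1$ to reassemble the constant part yields the bracketed expressions; raising them to the power $k$ gives \eqref{order_one} and \eqref{order_two}.

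The main obstacle is organizational rather than conceptual: the second-moment identity \eqref{order_two} carries many cross terms, and one must keep careful track of which survive in each of the four cases and with what coefficient — in particular, distinguishing the ``diagonal'' $\bfone_{y=y'}$ contributions arising from the pure-noise sub-case from the two genuinely correlated ($\delta_u$-squared) terms coming from the subtree posterior. I would structure the bookkeeping by isolating in turn the $\delta_u$-independent contributions (these produce the $\eps$-linear terms involving indicators), the single-$\delta_u$ contributions (the $\E[\delta_u(\cdot+\theta_u)\mid\xi_u=\star]$ terms), and the two-$\delta_u$ contributions (the $\E[\delta_u(\cdot+\theta_u)\delta_u(\cdot+\theta_u)\mid\xi_u=\star]$ and $\sum_z$-type terms), and checking at the end that the constant collapses to $1$ so that the expression is of the form $(1+\text{corrections})^{k}$.
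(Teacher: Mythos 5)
Your proposal is correct and follows essentially the same route as the paper: factorize over the $k$ conditionally independent children to reduce to a single-edge expectation, then condition on the side-channel event at $u$ and on whether the edge noise fires, using $\sum_a\delta_u(a)=0$ (equivalently $\sum_z\mu_u(z)=1$) to collapse the uniform-noise contributions. The only cosmetic difference is your appeal to the $\Z_q$ shift symmetry to argue $z$-independence before averaging over $\theta_u$; the paper simply keeps the joint expectation $\E[\delta_u(y-x+\theta_u)\mid\xi_u=\star]$ throughout, which is the quantity appearing in the statement, so that step is harmless but unnecessary.
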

\begin{proof} We start with the first identity~\eqref{order_one}. 
Since the distributions $\{(\mu_u,Y_{ou}) :\; u \sim o\}$ are conditionally independent given $\theta_o$, we have 
\[\E [Z_o(y)| \theta_o = x,\xi_o=\star] = \prod_{u \sim o} \big(p + (1-p)q\E[\mu_u(y-Y_{ou})|\theta_o = x,\xi_o=\star]\big).\]
Moreover, 
\begin{align*}
\E[\mu_u(y-Y_{ou})| \theta_o = x,\xi_o=\star] &= \eps\E[\bfone_{y - Y_{ou}=\xi_u}|\theta_o=x,\xi_o = \star,\xi_u\neq\star] \\
&~~+(1-\eps) \E[\mu_u(y-Y_{ou})|\theta_o=x,\xi_o = \star,\xi_u = \star].
\end{align*}
The first term in the right-hand side is $\P(y - Y_{ou}=\theta_u|\theta_o=x) = (1-p)\bfone_{x=y} + \frac{p}{q}$. 
The second term is 
\begin{align*}
(1-p)\E[\mu_u(y-x+\theta_u)|&\xi_u = \star] + \frac{p}{q}\sum_{z}\E[\mu_u(y-x+z)|\xi_u = \star]\\
&=(1-p)\E[\mu_u(y-x+\theta_u)|\xi_u = \star] + \frac{p}{q}.
\end{align*}
Therefore 
\begin{align*}
\E[\mu_u(y-Y_{ou})| \theta_o = x,\xi_o=\star] &= \eps\big((1-p)\bfone_{x=y} + \frac{p}{q}\big)\\
&~~~+(1-\eps)\big((1-p)\E[\mu_u(y-x+\theta_u)|\xi_u = \star]+\frac{p}{q}\big)\\
&=\frac{1}{q}+\eps(1-p)(\bfone_{x=y}-\frac{1}{q})\\
&~~~+(1-\eps)(1-p)\E[\delta_u(y-x+\theta_u)|\xi_u = \star].
\end{align*}
So we obtain
\begin{align*}
\E [Z_o(y)| \theta_o =x,\xi_o=\star] &= \prod_{u \sim o}\big(1+ \eps(1-p)^2(q\bfone_{x=y}-1)+ (1-\eps)(1-p)^2q\E[\delta_u(y-x+\theta_u)|\xi_u = \star]\big) \\
&=\Big(1+ \eps(1-p)^2(q\bfone_{x=y} -1)+ (1-\eps)(1-p)^2q\E[\delta_u(y-x+\theta_u)|\xi_u = \star]\Big)^{k},
\end{align*}
where $u$ is an arbitrary offspring since terms participating in the product are all equal. 
Now we deal with the second identity~\eqref{order_two}:
\begin{align*}
\E[Z_o(y) Z_o(y')|\theta_o = x,\xi_o=\star] &= \prod_{u \sim o}\E\Big[\big(p+(1-p)q\mu_u(y-Y_{ou})\big)\\
&\hspace{2cm}\big(p+(1-p)q\mu_u(y'-Y_{ou})\big)|\theta_o=x,\xi_{o}=\star\Big]\\
&= \Big(p^2+ p(1-p)q\big(\E[\mu_u(y-Y_{ou})| \theta_o=x,\xi_{o}=\star] \\
&\hspace{3cm}~~+ \E[\mu_u(y'-Y_{ou})| \theta_o=x,\xi_{o}=\star]\big) \\ 
&~~~~~~+ (1-p)^2q^2\E[\mu_u(y-Y_{ou})\mu_u(y'-Y_{ou}) |\theta_o=x,\xi_{o}=\star]\Big)^{k}.
\end{align*}
Similarly to a previous computation, we have 
\begin{align*}
\E[\mu_u(y - Y_{ou})|\theta_o=x,\xi_o=\star] &= \eps\Big((1-p)\bfone_{x=y} + \frac{p}{q}\Big)\\
&~~~+(1-\eps)\Big((1-p)\E[\mu_u(y-x+\theta_u)|\xi_u=\star]+\frac{p}{q}\Big),
\end{align*}
and
\begin{align*}
\E[\mu_u(y - Y_{ou})\mu_u(y' - Y_{ou}) &| \theta_o=x,\xi_o=\star] 
= \eps \P(y - Y_{ou} = y'-Y_{ou} = \theta_u |\theta_o=x)\\
&+(1-\eps)\E[\mu_u(y - Y_{ou})\mu_u(y' - Y_{ou})|\theta_o=x,\xi_o=\star,\xi_u=\star]\\
&=\eps\bfone_{y=y'}\Big( (1-p)\bfone_{y=x}+\frac{p}{q}\Big)\\
&~~~+(1-\eps)\Big((1-p) \E[\mu_u(y-x+\theta_u)\mu_u(y'-x+\theta_u)|\xi_u=\star]\\
&\hspace{2cm}~ + \frac{p}{q}\sum_{z} \E[\mu_u(y-z)\mu_u(y'-z)|\xi_u=\star]\Big).
\end{align*}
Combining and rearranging terms we obtain the desired result.
\end{proof}

Now we use the expressions just obtained to produce Taylor estimates for each term in the decomposition~\eqref{decomposition}.  
\begin{lemma}\label{taylor_approx}
Let $X =  \E[\delta_u(\theta_u)| \xi_u = \star]$ and $\hat{\kappa} = k(1-p)^2$. There exists constants $c, C$ depending only on $q$ such that if $\hat{\kappa}|X| \le c$ and $\hat{\kappa} \eps <  c$, then  
\begin{align}
\Big|\E[Z_o(x)|\theta_o=x,\xi_o = \star] - 1- \eps\hat{\kappa} (q-1) - (1-\eps)\hat{\kappa} q X\Big| &\le C \hat{\kappa}^2 (X^2+\eps^2),\label{expansion_1}\\
\Big|\E\Big[Z_o(x)\Big(\sum_y Z_o(y) - q\Big)\Big|\theta_o=x,\xi_o = \star\Big]\Big| &\le C \hat{\kappa}^2 (X^2+\eps^2),\label{expansion_2}\\
\mbox{and}~~\E\Big[\Big(\sum_{y\in\Z_q} Z_o(y) -q\Big)^2\Big|\theta_o=x,\xi_o=\star\Big] &\le C \hat{\kappa}^2 (X^2+\eps^2).\label{expansion_3}
\end{align}
\end{lemma}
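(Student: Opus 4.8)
The plan is to reduce all three inequalities to a single elementary Taylor estimate, $\big|(1+t)^k - 1 - kt\big|\le C_0(kt)^2$ for an absolute constant $C_0$ whenever $k|t|\le 1$, applied to the $k$-th powers produced by Lemma~\ref{computations}, after first showing that every base $1+t$ occurring in \eqref{order_one}--\eqref{order_two} satisfies $|t|\le C(q)(1-p)^2(\eps+X)$. Granting this, $k|t|\le C(q)\big(\hat{\kappa}\eps+\hat{\kappa}|X|\big)\le 2C(q)\,c$ is below $1$ once $c=c(q)$ is chosen small, so each power equals its linear part plus a remainder bounded by $C_0(kt)^2\le C(q)\hat{\kappa}^2(\eps^2+X^2)$. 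Note that the crude bound $|\delta_u|\le 1$ only gives $|t|=O_q\big((1-p)^2\big)$, which is useless here because $\hat{\kappa}=k(1-p)^2$ need not be small under the hypotheses; the improvement comes entirely from Lemma~\ref{density}.

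First I would use Lemma~\ref{density} to control the quadratic-in-$\delta_u$ quantities. Conditioning on $\theta_u$ and using $\rmd Q_x/\rmd Q(\mu)=q\mu(x)$ together with $\sum_z\delta_u(z)=0$ yields $\E\big[\delta_u(y-x+\theta_u)\mid\xi_u=\star\big]=\sum_{x'}\E\big[\delta_u(y-x+x')\,\delta_u(x')\mid\xi_u=\star\big]$ and, in the case $y=x$, the identity $X=\E\big[\delta_u(\theta_u)\mid\xi_u=\star\big]=\E\big[\|\delta_u\|_{\ell_2}^2\mid\xi_u=\star\big]\ge 0$; similarly $\E\big[\delta_u(y-x+\theta_u)^2\mid\xi_u=\star\big]=\sum_{x'}\E\big[\delta_u(y-x+x')^2\,\mu_u(x')\mid\xi_u=\star\big]\le X$ since $0\le\mu_u\le 1$. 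From these, together with $2|ab|\le a^2+b^2$, every second-order term appearing in Lemma~\ref{computations}---namely $\E[\delta_u(y-x+\theta_u)\mid\xi_u=\star]$, $\sum_z\E[\delta_u(y-z)\delta_u(y'-z)\mid\xi_u=\star]$, and $\E[\delta_u(y-x+\theta_u)\delta_u(y'-x+\theta_u)\mid\xi_u=\star]$---is at most $X$ in absolute value. Combined with $p,1-p\le 1$, this establishes $|t|\le C(q)(1-p)^2(\eps+X)$ on every base.

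Then \eqref{expansion_1} is immediate: by \eqref{order_one} with $y=x$ the base is $1+u_x$ with $u_x=\eps(1-p)^2(q-1)+(1-\eps)(1-p)^2 q\,a_x$ and $a_x=X$, so $k u_x=\eps\hat{\kappa}(q-1)+(1-\eps)\hat{\kappa}q X$ is exactly the claimed linear term, and the remainder is at most $C(q)\hat{\kappa}^2(\eps^2+X^2)$. For \eqref{expansion_2} I would write $\E\big[Z_o(x)\big(\sum_y Z_o(y)-q\big)\mid\theta_o=x,\xi_o=\star\big]=\sum_y\E[Z_o(x)Z_o(y)\mid\cdots]-q\,\E[Z_o(x)\mid\cdots]$, expand each $k$-th power, and observe that the first-order terms cancel exactly: summing the base $1+s_y$ of $\E[Z_o(x)Z_o(y)\mid\cdots]$ over $y$, the $\eps$-terms collapse via $\sum_y\bfone_{y=x}=1$, the $\delta_u$-linear term other than the $a_x$ contribution vanishes via $\sum_y\delta_u(y+a)=0$ for every shift $a$, and the inner $z$-sums vanish by the same identity, leaving precisely $q$ times the base $1+u_x$; hence only the quadratic remainders survive, each $\le C(q)\hat{\kappa}^2(\eps^2+X^2)$. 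Finally \eqref{expansion_3} follows the same pattern: expanding $\E\big[(\sum_y Z_o(y)-q)^2\mid\cdots\big]=\sum_{y,y'}\big(\E[Z_o(y)Z_o(y')\mid\cdots]-\E[Z_o(y)\mid\cdots]-\E[Z_o(y')\mid\cdots]+1\big)$, the doubly-indexed first-order coefficients $s_{y,y'}-u_y-u_{y'}$ sum to zero by the same symmetrization identities (now also using $\sum_{y,y'}(\bfone_{y=y'}-q^{-1})=0$), so again only quadratic remainders remain.

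The main obstacle I anticipate is purely organizational: matching the linear coefficients on the two sides of \eqref{expansion_2} and \eqref{expansion_3} requires tracking the many terms of \eqref{order_two} and checking, term by term, that each first-order contribution either cancels after the $\sum_y$ (respectively $\sum_{y,y'}$) or is absorbed into the $u_x$ (respectively $u_y,u_{y'}$) term. No step is conceptually deep once the Lemma~\ref{density} reduction is in hand, but the computation is error-prone, so I would carry it out by keeping the three groups of terms---order $\eps$, order $\delta_u$, order $\delta_u^2$---separated throughout.
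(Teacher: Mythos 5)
Your proposal is correct and follows essentially the same route as the paper: Taylor-expand the $k$-th powers from Lemma~\ref{computations} via $|(1+t)^k-1-kt|\le C(kt)^2$, control all second-order $\delta_u$-terms through Lemma~\ref{density} (in particular $X=\E[\|\delta_u\|_{\ell_2}^2\mid\xi_u=\star]$), and observe that the linear terms cancel under the sums over $y$ (and $y,y'$) because $\sum_z\delta_u(z)=0$. The only cosmetic difference is that you bound each base uniformly by $C(q)(1-p)^2(\eps+X)$ up front, which subsumes the paper's Lemma~\ref{maximal} and its $\max_z$ estimate.
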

\begin{proof}
We use $|(1+x)^d - 1 - x| \le e^c d^2 x^2$ for all $x$ such that $d |x|\le c$. Applying this to~\eqref{order_one} yields~\eqref{expansion_1}. For $k(1-p)^2q|X| \le 1/2$ and $k(1-p)^2 (q-1) \eps<1/2$ we have
\begin{align*}
\Big|\E[Z_o(x)|\theta_o=x,\xi_o=\star] - 1-\eps\hat{\kappa} (q-1) - (1-\eps)\hat{\kappa} q X \Big| &\le e \hat{\kappa}^2\big(\eps(q-1)+(1-\eps)qX\big)^2\\
&\le 2e \hat{\kappa}^2q^2\big(\eps^2+X^2\big).
\end{align*}
Next, we use~\eqref{order_two}, combined with the fact $\sum_{y}\delta_u(y) = 0$ to obtain that if $\hat{\kappa}(|X|\vee \eps) \le c(q)$ for some constant $c(q)$ then 
\[\Big|\sum_y \E\big[Z_o(x) Z_o(y)|\theta_o=x,\xi_o=\star] - q - \eps \hat{\kappa} q(q-1) - (1-\eps)\hat{\kappa} q^2X\Big| \le C(q)k^2\Sigma^2,\]
where $\Sigma$ gathers all the terms other than 1 in the expression~\eqref{order_two}, and the constant $C$ depends on $c$. 
We use the inequality $(\sum_{i=1}^n x_i)^2 \le n\sum_i x_i^2$ to obtain
\begin{align}\label{error_2}
k^2\Sigma^2 &\le C(q)\hat{\kappa}^2\Big(\eps^2+ \E[\delta_u(\theta_u)|\xi_u=\star]^2\nonumber\\
&~~~+ \sum_{y \in\Z_q} \E[\delta_u(y-x+\theta_u)|\xi_u=\star]^2 + \max_{z\in\Z_q} \E[\delta_u(z)^2|\xi_u=\star]^2\Big).
\end{align}
The last term was obtained by using Cauchy-Schwarz on the term $\sum_{z}\E[ \delta_u(y-z)\delta_u(y'-z) | \xi_u = \star]$ in~\eqref{order_two} and then replacing sums over $y,z$ by maxima. 
Now it remains to show that the last two terms in~\eqref{error_2} are bounded by $X^2$. Starting with the last term, we have 
\[\max_{z\in\Z_q} \E[\delta_u(z)^2|\xi_u=\star]^2 \le \Big(\sum_{z\in\Z_q} \E[\delta_u(z)^2|\xi_u=\star]\Big)^2 
= \E[\delta_u(\theta_u)|\xi_u=\star]^2 = X^2.
\]
 As for the remaining term,
\begin{lemma}\label{maximal}
We have $\sum_{y \in\Z_q} \E[\delta_u(y-x+\theta_u)|\xi_u=\star]^2 \le qX^2$.
\end{lemma}
This implies $k^2\Sigma^2 \le C(q)\hat{\kappa}^2(\eps^2+X^2)$.
This, combined with~\eqref{expansion_1}, allows us to deduce~\eqref{expansion_2}.
Now we treat the last term~\eqref{expansion_3}:
\begin{align*}
\E\Big[\big(\sum_y Z_o(y) -q\big)^2|\theta_o=x,\xi_o=\star\Big] &= \sum_{y,y'}\E[ Z_o(y) Z_o(y')|\theta_o=x,\xi_o=\star] \\
&~~~- 2q\sum_{y}\E[Z_o(y)|\theta_o=x,\xi_o=\star] + q^2.
\end{align*}
Similarly to our treatment of the quantity $\Sigma$, we use expression~\eqref{order_two} and perform a Taylor expansion to obtain
\[\Big| \sum_{y,y'}\E[ Z_o(y) Z_o(y')|\theta_o=x,\xi_o=\star] - q^2 \Big| \le C(q)\hat{\kappa}^2(\eps^2+X^2).\]
Using~\eqref{order_one} the cross term can be estimated as
\begin{align*}
\Big|\sum_{y \in \Z_q}\E[Z_o(y)|\theta_o=x,\xi_o=\star] - q \Big| \le C(q)\hat{\kappa}^2(\eps^2+X^2).
\end{align*}
Now we conclude 
\[\E\Big[\big(\sum_{y\in\Z_q} Z_o(y) -q\big)^2|\theta_o=x,\xi_o=\star\Big]\le C(q)\hat{\kappa}^2(\eps^2+X^2).\]
\end{proof}

\begin{proofof}{Lemma~\ref{maximal}}
 For $y \in \Z_q$, using Lemma~\ref{density} we have $\E[\delta_u(y+\theta_u)|\xi_u=\star] = \frac{1}{q}\sum_{z\in \Z_q} \E[\delta_u(y+z)|\theta_u = z,\xi_u=\star] = \sum_{z\in \Z_q} \E[\delta_u(y+z)\mu_u(z)|\xi_u=\star] = \sum_{z\in \Z_q} \E[\delta_u(y+z)\delta_u(z)|\xi_u=\star]$. The last equality follows from $\sum_z \delta_u(z) = 0$. Then
 \begin{align*}
 \sum_{y \in\Z_q} \E[\delta_u(y+\theta_u)|\xi_u=\star]^2 &= \sum_{y \in\Z_q} \Big(\sum_{z\in \Z_q} \E[\delta_u(y+z)\delta_u(z)|\xi_u=\star]\Big)^2\\
 &\stackrel{(a)}{\le}  q\sum_{y,z\in \Z_q} \E[\delta_u(y+z)\delta_u(z)|\xi_u=\star]^2\\
 &\stackrel{(b)}{\le} q\sum_{y,z\in \Z_q} \E[\delta_u(y+z)^2|\xi_u=\star]\E[\delta_u(z)^2|\xi_u=\star]\\
 &= q\Big(\sum_{y\in \Z_q} \E[\delta_u(y)^2|\xi_u=\star]\Big)^2.
 \end{align*}
 Inequality $(a)$ follows from $(\sum_{i=1}^n x_i)^2 \le n\sum_i x_i^2$, and $(b)$ follows from Cauchy-Schwarz. Lastly, we have $\sum_{y\in \Z_q} \E[\delta_u(y)^2|\xi_u=\star] = \E[\delta_u(\theta_u)|\xi_u=\star] = X.$ 
\end{proofof}

\vspace{.5cm}
Now we plug the estimates of Lemma~\ref{taylor_approx} in~\eqref{decomposition}. Using the fact $0 \le Z_o(x)/\sum_y Z_o(y)\le 1$, we obtain 
\[\Big| \hat{z}_{o,l} - \eps\hat{\kappa} \frac{q-1}{q} - (1-\eps)\hat{\kappa}  z_{l-1}\Big| \le C(q) \hat{\kappa}^2 (z_{l-1}^2+\eps^2).\]  
 where $C(q)$ is a constant that depends only on $q$.

\subsection{Proof of Corollary \ref{coro:Loc-Zq}}

We first prove the result concerning the overlap with $\btheta_0$. Let $\sigma \in \mathscr{S}_{q}$ be a fixed permutation.  We have 
\begin{align*}
\P\big(\hat{\theta}^{(l)}_u = \sigma(\theta_{u})\big) - \frac{1}{q} &= \sum_{x \in \Z_q}\E\big[\widehat{\mu}_{G_n,u,l}(\sigma(x)) \mu_{G_n,u}(x)\big] - \frac{1}{q}\\
&=\sum_{x \in \Z_q} \E\big[\big(\widehat{\mu}_{G_n,u,l}(\sigma(x))  - \frac{1}{q}\big)\mu_{G_n,u}(x)\big]\\
&\le \sum_{x \in \Z_q} \E\big[\big|\widehat{\mu}_{G_n,u,l}(\sigma(x))  - \frac{1}{q}\big|\big]\\
&\le \sqrt{q}\E \big[d_{\ell_2}(\widehat{\mu}_{G_n,u,l},\onu)^2\big]^{1/2}.
\end{align*}
The last line follows by Cauchy-Schwarz and then Jensen's inequality. Averaging over $u \in V_n$, applying Jensen's inequality once more, and then using Theorem~\ref{local_alg_Z_q} yields the first statement. 

Next, let $f : \Z_q \mapsto \R$ with $\sum_{x \in \Z_q} f(x)=0$ and $\frac{1}{q}\sum_{x \in \Z_q} f(x)^2=1$. The loss of $\hatbX^{(l)}$ is
\begin{align*}
\risk_n(\hatbX^{(l)};f) &= \frac{1}{n^2}\E \|\bX_f\|_F^2 -\frac{2}{n^2}\E \big\lbr \hatbX^{(l)},\bX_f\big\rbr + \frac{1}{n^2}\E \|\hatbX^{(l)}\|_F^2\\
&\ge  \frac{1}{n^2}\E \|\bX_f\|_F^2 -\frac{2}{n^2}\E \big\lbr \hatbX^{(l)},\bX_f\big\rbr.
\end{align*}
We have $\E \|\bX_f\|_F^2 = \sum_{u,v\in V_n} \E[f(\theta_u)^2f(\theta_v)^2] = \frac{n}{q}\sum_{x \in \Z_q}f(x)^4+ n(n-1)$. So $\lim \frac{1}{n^2}\E \|\bX_f\|_F^2= 1$.
On the other hand, since $\sum_{x \in \Z_q} f(x)=0$, we have 
\begin{align*}
\E\Big[f(\theta_u) \big|Y^{(\eps)}_{B_{G_n}(u,l)}\Big] &= \sum_{x \in \Z_q} \big(\widehat{\mu}_{G_n,u,l}(x)-\frac{1}{q}\big)f(x)\\
&= \sum_{x \in \Z_q} \widehat{\delta}_{u,l,G_n}(x)f(x),
\end{align*}
where $\widehat{\delta}_{u,l,G_n}(x) = \widehat{\mu}_{G_n,u,l}(x)-\frac{1}{q}$, $x \in \Z_q$. 
On the other hand we have 
\begin{align*}
\E \big\lbr \hatbX^{(l)},\bX_f\big\rbr &= \E \Big[\Big(\sum_{u \in V_n} f(\theta_u) \E\big[f(\theta_u) \big|Y^{(\eps)}_{B_{G_n}(u,l)}\big]\Big)^2\Big] \\
&= \E \Big[\Big(\sum_{x \in \Z_q}  f(x) \Big(\sum_{u \in V_n} \widehat{\delta}_{u,l,G_n}(x) f(\theta_u)\Big)\Big)^2\Big].
\end{align*}
We use Cauchy-Schwarz inequality and the fact $\sum_{x \in \Z_q} f(x)^2=q$ to obtain
\begin{align*}
\E \big\lbr \hatbX^{(l)},\bX_f\big\rbr 
&\le q \E \Big[\sum_{x \in \Z_q}  \Big(\sum_{u \in V_n} \widehat{\delta}_{u,l,G_n}(x) f(\theta_u)\Big)^2\Big]\\
&\le q \E \Big[ \Big(\sum_{u \in V_n}f(\theta_u)^2\Big)  \Big(\sum_{u \in V_n} d_{\ell_2}(\widehat{\mu}_{G_n,u,l},\onu)^2\Big)\Big]\\
&\le n q \|f\|_{\infty}^2 \sum_{u \in V_n} \E d_{\ell_2}(\widehat{\mu}_{G_n,u,l},\onu)^2.
\end{align*}
We apply Theorem~\ref{local_on_tree} to obtain 
$\limsup_{l}\limsup_{n} \frac{1}{n^2}\E \big\lbr \hatbX^{(l)},\bX_f\big\rbr \le C\|f\|_{\infty}^2\eps$,
and this yields the desired result.

\end{document}